\documentclass[11pt,a4paper]{amsart}
\usepackage{amssymb,amsmath,amsthm,amsfonts,amscd,euscript,hyperref,fourier,bm}
\usepackage[in]{fullpage}
\numberwithin{equation}{section}
\newtheorem{thm}{Theorem}[section]
\newtheorem*{ithm}{Theorem}
\newtheorem{prop}[thm]{Proposition}
\newtheorem{lem}[thm]{Lemma}
\newtheorem{cor}[thm]{Corollary}
\theoremstyle{remark}
\newtheorem{rem}[thm]{Remark}
\newtheorem{definition}[thm]{Definition}
\newtheorem{example}[thm]{Example}

\newtheorem{conj}[thm]{Conjecture}

\newcommand{\nc}{\newcommand}
\nc{\cO}{\mathcal O}
\nc{\cF}{\mathcal F}
\nc{\cL}{\mathcal L}
\nc{\msl}{\mathfrak{sl}}
\nc{\mgl}{\mathfrak{gl}}
\nc{\U}{\mathrm U}
\nc{\bH}{\EuScript H}
\nc{\Res}{\mathrm{Res\ }}
\nc{\Lie}{\mathrm{Lie\ }}

\newcommand{\bZ}{{\mathbb Z}}

\newcommand{\fg}{{\mathfrak g}}

\newcommand{\bd}{{\mathbf d}}
\newcommand{\bw}{{\mathbf w}}
\newcommand{\M}{{\mathcal M}}
\nc{\ch}{\mathrm{ch}}
\nc{\la}{\lambda}
\nc{\msp}{\mathfrak{sp}}
\nc{\cd}{\cdots}
\nc{\hk}{\hookrightarrow}
\nc{\T}{\otimes}
\nc{\al}{\alpha}
\nc{\om}{\omega}
\nc{\veps}{\varepsilon}
\nc{\ket}{\rangle}

\begin{document}

\title{Koszul algebras and Donaldson-Thomas invariants}
\author{Vladimir Dotsenko}
\address{Institut de Recherche Math\'ematique Avanc\'ee, UMR 7501, Universit\'e de Strasbourg et CNRS, 7 rue Ren\'e-Descartes, 67000 Strasbourg, France}
\address{Institut Universitaire de France}
\email{vdotsenko@unistra.fr}

\author{Evgeny Feigin}
\address{HSE University, Faculty of Mathematics, Ulitsa Usacheva 6, Moscow 119048, Russia}  
\address{Skolkovo Institute of Science and Technology, Center for Advanced Studies, Bolshoy Boulevard 30, bld. 1, Moscow 121205, Russia}
\email{evgfeig@gmail.com}

\author{Markus Reineke}
\address{Ruhr-University Bochum, Faculty of Mathematics, Universit\"atstrasse 150, 44780 Bochum, Germany}
\email{Markus.Reineke@ruhr-uni-bochum.de}

\date{}

\begin{abstract}
For a given symmetric quiver $Q$, we define a supercommutative quadratic algebra $\mathcal{A}_Q$ whose Poincar\'e series is related to the motivic generating function of $Q$ by a simple change of variables. The Koszul duality between supercommutative algebras and Lie superalgebras assigns to the algebra $\mathcal{A}_Q$ its Koszul dual Lie superalgebra $\mathfrak{g}_Q$. We prove that the motivic Donaldson-Thomas invariants of the quiver $Q$ may be computed using the Poincar\'e series of a certain Lie subalgebra of $\mathfrak{g}_Q$ that can be described, using an action of the first Weyl algebra on $\mathfrak{g}_Q$, as the kernel of the operator $\partial_t$. This gives a new proof of positivity for motivic Donaldson--Thomas invariants. In addition, we prove that the algebra $\mathcal{A}_Q$ is numerically Koszul for every symmetric quiver $Q$ and conjecture that it is in fact Koszul; we also prove this conjecture for quivers of a certain class.
\end{abstract}

\maketitle

\section{Introduction}
Motivic Donaldson-Thomas invariants of (symmetric) quivers, abbreviated DT invariants in the following, were introduced in 
\cite{kontsevich_cohomological} as natural analogues of the Donaldson-Thomas invariants counting classes of sheaves. 
They are originally defined purely formally via Euler product factorizations of motivic generating series of stacks of 
representations of the quiver. To investigate their properties, more structural interpretations are thus desirable. 
The first such interpretation was already given in \cite{kontsevich_cohomological}, by introducing the Cohomological Hall algebra of a quiver. 
This algebra was conjectured in \cite{kontsevich_cohomological}, and proven in \cite{Efimov}, to be isomorphic to the free supercommutative algebra
generated by a certain subspace whose Poincare series encodes all DT invariants, thus proving their integrality and positivity. 
Geometric interpretations of the DT invariants in terms of moduli spaces of quiver representations were subsequently 
derived in \cite{meinhardt_reineke}, \cite{franzen_reineke}. In the special case of one-vertex quivers, a 
combinatorial formula for DT invariants was derived in \cite{MR2889742}. Much more recently, the graded dual of the 
Cohomological Hall algebra was identified in \cite{dotsenko_mozgovoy} with the canonical coalgebra on the universal envelope 
of a vertex Lie algebra; since the latter is cofree, this in particular gave an alternative proof of the main result of \cite{Efimov}. 

The motivic generating series $A_Q(x,q)$ of a symmetric quiver $Q$ can be expressed in terms of the motivic DT invariants $\mathrm{DT}_\mathbf{d}(q)$, 
where $\mathbf{d}$ is a collection of nonnegative integers, one for each vertex of $Q$. Roughly speaking $A_Q(x,q)$ is equal to the plethystic exponential of 
the generating function of $\mathrm{DT}_\mathbf{d}(q)$ divided by $(1-q)$. Commonly, the plethystic exponential is thought of as a formula that relates the Poincar\'e series of a vector space to that of
the space of symmetric tensors, see, for example, \cite{getzler_mixed}. For our purposes, a slight modification of that result will be of importance: the plethystic exponential relates the Poincar\'e 
series of a Lie superalgebra to that of its universal enveloping algebra \cite{MR1360005}. 

Let us briefly explain how Lie superalgebras enter the story.
We start with defining an associative supercommutative algebra $\mathcal{A}_Q$. That algebra is defined via explicit quadratic relations on generators $a_{i,k}$, where $i$ is a vertex of $Q$ and $k$ is a
nonnegative integer. We attach certain degrees to the generators $a_{i,k}$ and show that the Poincar\'e series of $\mathcal{A}_Q$ is related to the motivic generating function $A_Q(x,q)$ by a simple change of variables. 
In the simplest non-trivial case of the quiver $Q$ with one vertex and two loops, the algebra $\mathcal{A}_Q$ is the commutative algebra in even variables $a_k$, $k\ge 0$, subject to the relations 
 \[
\sum_{k_1+k_2=k}a_{k_1}a_{k_2}=0, \qquad k\ge 0.
 \]
According to the Koszul duality theory for associative algebras, there is a quadratic algebra $\mathcal{A}_Q^!$ associated to $\mathcal{A}_Q$. Being a Koszul dual of a \emph{supercommutative} associative algebra, the algebra $\mathcal{A}_Q^!$ is isomorphic to the universal enveloping algebra of a certain Lie superalgebra $\fg_Q$. For example, for the quiver with one vertex and two loops, one obtains the Lie superalgebra with odd generators $b_k$, $k\ge 0$, subject to the relations $[b_k,b_l]=[b_{k-1},b_{l+1}]$. Since $A_Q(x,q)$ is equal to the plethystic exponential of the generating function of the DT invariants divided by $(1-q)$, it remains to find a subspace of $\fg_Q$
whose Poincar\'e series, once multiplied by $(1-q)^{-1}$, produces the Poincar\'e series of $\fg_Q$. The following theorem summarises the results proved in Section \ref{sec:bar-and-dt}; it identifies this subspace as a Lie subalgebra of $\fg_Q$, re-proving the positivity of motivic Donaldson--Thomas invariants of $Q$. To state the theorem, we use the notation  $\mathrm{Diff}_1$ for the first Weyl algebra over $\mathbb{Q}$ with its standard generators $t$ and $\partial_t$.

\begin{ithm}
The Koszul dual Lie algebra $\mathfrak{g}_Q$ has a $\mathrm{Diff}_1$-module structure, and the action of $\mathbb{Q}[t]\subset\mathrm{Diff}_1$ on $\mathfrak{g}_Q$ is free, with the space of generators $\mathrm{Ker}(\partial_t)$. Moreover, for each $\mathbf{d}\in\mathbf{Z}_{\ge 0}^{Q_0}$, we have 
 \[
\mathrm{DT}_\mathbf{d}(q)=\sum_{n\ge 0} \dim(\mathrm{Ker}(\partial_t)_{\mathbf{d},n})q^{\frac12n-1}\in\mathbb{Z}_{\ge 0}[q^{\pm\frac12}].
 \] 
\end{ithm}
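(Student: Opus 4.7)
The plan is to first construct the $\mathrm{Diff}_1$-module structure on $\mathfrak{g}_Q$ in such a way that $\partial_t$ acts by a Lie super-derivation, then to establish the freeness of the $\mathbb{Q}[t]$-action by an inductive argument on an internal grading, and finally to extract the formula for $\mathrm{DT}_\mathbf{d}(q)$ by combining this freeness with the Koszul-duality and plethystic-exponential identities recalled in the introduction.

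For the Weyl-algebra action, I would begin by defining $\partial_t$ as the Lie super-derivation of the free Lie superalgebra on the Koszul-dual generators $b_{i,k}$ extending $\partial_t b_{i,k}=k\, b_{i,k-1}$, and showing that it descends to $\mathfrak{g}_Q$ by verifying that the defining quadratic relations are preserved. This is most transparent on the $\mathcal{A}_Q$ side, where the relations are written in generating-series form (as in $a(z)^2=0$ for the two-loop quiver) and the Weyl-algebra action on the coefficients preserves this form verbatim; dualising produces the corresponding statement on the Lie side. Because any Lie super-derivation that restricts to the identity on generators must equal the Lie-degree operator, and hence act as $n\cdot\mathrm{id}$ on the degree-$n$ piece of $\mathfrak{g}_Q$, the operator $t$ cannot itself be a derivation; I therefore define $t$ on the degree-$n$ piece as $\tfrac{1}{n}$ times the derivation $t_{\mathrm{der}}$ extending $b_{i,k}\mapsto b_{i,k+1}$. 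A short computation using the identity $[\partial_t,t_{\mathrm{der}}]=n\cdot\mathrm{id}$ on the degree-$n$ piece then gives $[\partial_t,t]=\mathrm{id}$ on all of $\mathfrak{g}_Q$, completing the construction of the $\mathrm{Diff}_1$-action. The subspace $\mathrm{Ker}(\partial_t)$ is automatically a Lie subalgebra because $\partial_t$ is a derivation.

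For freeness, consider the canonical $\mathrm{Diff}_1$-equivariant map
\[
\phi\colon\mathbb{Q}[t]\otimes\mathrm{Ker}(\partial_t)\longrightarrow\mathfrak{g}_Q,\qquad p(t)\otimes v\mapsto p(t)\cdot v.
\]
Injectivity follows by iterated application of $\partial_t$: if $\sum_{k=0}^N t^k v_k$ with $v_k\in\mathrm{Ker}(\partial_t)$ lies in the kernel, then $\partial_t^N$ applied to it equals $N!\,v_N$, forcing $v_N=0$, and the argument descends. Surjectivity is proved by induction on the internal grading of $\mathfrak{g}_Q$ (the sum of the second indices of generators in an iterated bracket), which $\partial_t$ strictly decreases. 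The base case, internal grading zero, is trivial since $\partial_t b_{i,0}=0$ forces every such element to lie in $\mathrm{Ker}(\partial_t)$. For the inductive step, given $x\in\mathfrak{g}_Q$ of positive internal grading, $\partial_t x=\phi\bigl(\sum_k t^k w_k\bigr)$ with $w_k\in\mathrm{Ker}(\partial_t)$ by hypothesis; setting $y:=\phi\bigl(\sum_k\tfrac{t^{k+1}}{k+1}w_k\bigr)$ gives $\partial_t y=\partial_t x$, whence $x-y\in\mathrm{Ker}(\partial_t)\subset\mathrm{Im}(\phi)$ and therefore $x\in\mathrm{Im}(\phi)$.

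Once freeness is in hand, the Poincar\'e series of $\mathfrak{g}_Q$ factorises as $(1-q^{|t|})^{-1}$ times the Poincar\'e series of $\mathrm{Ker}(\partial_t)$, where $|t|$ is the weight of the operator $t$. Feeding this into the PBW identity expressing the Poincar\'e series of the Koszul-dual universal enveloping algebra $\mathcal{A}_Q^!=U(\mathfrak{g}_Q)$ as a plethystic exponential of $P_{\mathfrak{g}_Q}$, combining with the change of variables from $P_{\mathcal{A}_Q}$ to the motivic generating series $A_Q(x,q)$, and using the plethystic-exponential formula for $A_Q(x,q)$ in terms of $\mathrm{DT}_\mathbf{d}(q)/(1-q)$ from the introduction, one reads off the claimed identity $\mathrm{DT}_\mathbf{d}(q)=\sum_n\dim\bigl(\mathrm{Ker}(\partial_t)_{\mathbf{d},n}\bigr)q^{n/2-1}$ by matching coefficients, with the exponent $n/2-1$ uniquely pinned down by following this chain of identities through. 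Positivity is then immediate. The step I expect to be most delicate is the construction of $t$ and the verification of $[\partial_t,t]=\mathrm{id}$: because $t$ cannot be a Lie derivation, the $\tfrac{1}{n}$-normalisation on each Lie-degree piece is indispensable and relies on identifying $[\partial_t,t_{\mathrm{der}}]$ with the Lie-degree operator. A secondary concern is the careful bookkeeping of grading shifts required to pin down the exponent $n/2-1$ exactly.
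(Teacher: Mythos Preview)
Your construction of the $\mathrm{Diff}_1$-action and the freeness argument are fine, and in fact mirror the paper's approach (the paper normalises the \emph{other} generator, taking $t$ to be the derivation and dividing the lowering derivation by the Lie degree, but the outcome is the same). The freeness proof is also essentially the paper's Lemma on graded $\mathrm{Diff}_1$-modules, repackaged.

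The real gap is in the final paragraph. To pass from $P(U(\mathfrak{g}_Q))=\mathrm{Exp}(P(\mathfrak{g}_Q))$ to a statement about $\mathrm{DT}_\mathbf{d}$ you need an identity linking $P(U(\mathfrak{g}_Q))$ to $A_Q(x,q)$. The change of variables you invoke only relates $A_Q$ to $P(\mathcal{A}_Q)$, not to $P(\mathcal{A}_Q^!)=P(U(\mathfrak{g}_Q))$; bridging these two is exactly the numerical Koszulness relation $P(\mathcal{A}_Q,x,q)\,P((\mathcal{A}_Q^!)^\vee,q^{-1/2}x,q)=1$, which you have not proved and which is \emph{not} automatic. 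In the paper this is the content of Theorem~\ref{th:char-symmetry}, established by a substantial detour: one identifies $\mathfrak{g}_Q$ with the positive half $\mathfrak{L}_Q^+$ of the coefficient algebra of a free vertex Lie algebra, uses the CoHA-module interpretation of twisted Verma-type quotients $U(\mathfrak{L}_{Q,\mathbf{w}}^+)\otimes_{U(\mathfrak{L}_Q^+)}\mathbb{Q}$, and then takes a limit $\mathbf{w}\to\infty$ to obtain $A_Q(x,q)\,P(U(\mathfrak{g}_Q)^\vee,x,q)=1$. Without this ingredient your chain of identities simply does not close, and numerical Koszulness is in fact deduced \emph{from} this theorem in the paper, not the other way around.

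There is a second, smaller gap: even granting the formula, you still need $\sum_n\dim(\mathrm{Ker}(\partial_t)_{\mathbf{d},n})\,q^{n/2-1}$ to be a Laurent \emph{polynomial}, i.e.\ that $\mathrm{Ker}(\partial_t)_\mathbf{d}$ is finite-dimensional. The Weyl-algebra argument alone gives only a power series. The paper supplies this via a Gr\"obner--Shirshov basis argument (Proposition~\ref{prop:Lyndon}): the leading terms of the relations force normal super-Lyndon--Shirshov words beginning with some $b_{i,0}$ to be finite in number for each $\mathbf{d}$, giving $(1-q^{|\mathbf{d}|})\,\mathrm{ch}((\mathfrak{g}_Q^\vee)_\mathbf{d},q)\in\mathbb{Z}_{\ge0}[q^{1/2}]$, which together with the Weyl freeness forces the $(1-q)$-multiple to be a polynomial as well.
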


As an intermediate step in our proof, we establish the following result that is of independent interest. Recall that an associative algebra is called numerically Koszul if the Poincar\'e series of its Koszul complex is equal to~$1$; in our case where all Poincar\'e series are multigraded, this means that the Poincar\'e series of the algebra $\mathcal{A}_Q$ and of its Koszul dual algebra $\mathcal{A}_Q^!$ satisfy the relation
 \[
P(\mathcal{A},x,q)P((\mathcal{A}^!)^\vee,q^{-\frac12}x,q)=1.
 \] 

\begin{ithm}
The algebra $\mathcal{A}_Q$ is numerically Koszul for every symmetric quiver $Q$.
\end{ithm}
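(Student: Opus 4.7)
The identity to verify is
\[
P(\mathcal{A}_Q,x,q)\cdot P((\mathcal{A}_Q^!)^{\vee},q^{-1/2}x,q)=1.
\]
By construction, the first factor is a prescribed reparametrisation of the motivic generating function $A_Q(x,q)$, which is given explicitly as a hypergeometric sum over dimension vectors. The essential work is therefore to compute $P((\mathcal{A}_Q^!)^{\vee},x,q)$ and verify the identity as a formal power-series equality in $(x,q)$.

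My first step is to write out the Koszul dual $\mathcal{A}_Q^!$ explicitly from its presentation. Since $\mathcal{A}_Q$ is quadratic and supercommutative, we have $\mathcal{A}_Q^!=U(\fg_Q)$ for a Lie superalgebra $\fg_Q$ whose generators are dual to those of $\mathcal{A}_Q$ with opposite parities, and whose defining relations come from orthogonalising the relations of $\mathcal{A}_Q$ inside the tensor square of the space of generators. This gives an explicit quadratic presentation of $\mathcal{A}_Q^!$ that depends only on the quiver $Q$.

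The second step, and the main obstacle, is to compute $P((\mathcal{A}_Q^!)^{\vee},x,q)$ without leaning on the full structure of $\fg_Q$ --- that structure is precisely what the first theorem of the paper eventually provides, and relying on it would make the argument circular. To sidestep this, I would identify $P((\mathcal{A}_Q^!)^{\vee},x,q)$ as a companion hypergeometric sum of the same flavour as $A_Q(x,q)$, but with parities and Euler-form exponents reflecting the orthogonalisation of relations. An alternative route is to bound $P((\mathcal{A}_Q^!)^{\vee},x,q)$ from above using the quadratic presentation, produce a matching lower bound by constructing a sufficiently large spanning set (for instance, by exhibiting an ordered monomial basis coming from a suitable linear order on the dual generators), and read off the resulting closed form.

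The final step is the verification of a $q$-series identity. The required equality should reduce to a dimension-vector refinement of the Cauchy/$q$-binomial identity
\[
\Big(\sum_{n\ge 0}\frac{z^n}{(q;q)_n}\Big)\cdot\Big(\sum_{n\ge 0}\frac{(-z)^n q^{\binom{n}{2}}}{(q;q)_n}\Big)=1,
\]
applied vertex by vertex. The symmetry of the quiver, equivalently the symmetry of the Euler form $\chi_Q$, is essential here: it is precisely what allows the cross-vertex contributions in the two factors to pair up symmetrically and cancel, so that only the vertexwise Cauchy identities remain.
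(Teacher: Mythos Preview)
Your proposal has a genuine gap in Step~2, and the heuristic in Step~3 fails beyond the trivial case.

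For Step~2: there is no reason to expect $P((\mathcal{A}_Q^!)^\vee,x,q)=P(U(\fg_Q)^\vee,x,q)$ to admit a closed hypergeometric expression of the same shape as $A_Q(x,q)$. Already for the one-vertex quiver with $m\ge 2$ loops, $A_Q(x,q)$ is a Nahm-type sum (for $m=2$ of Rogers--Ramanujan type), and its multiplicative inverse has no known single-sum form. Your fallback of matching upper and lower bounds would amount to producing a monomial basis of $U(\fg_Q)$, which is exactly the hard content the paper only achieves for special quivers (Section~\ref{sec:GB}); for general $Q$ no such basis is available.

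For Step~3: the vertexwise Cauchy identity $\sum_{d+e=n}(-1)^eq^{\binom{e}{2}}/((q)_d(q)_e)=\delta_{n,0}$ is destroyed once the exponent carries a nontrivial quadratic form. Since
\[
P(\mathcal{A}_Q,x,q)=\sum_{\mathbf{d}} \frac{(-q^{1/2})^{\sum_{i,j}m_{i,j}\mathbf{d}_i\mathbf{d}_j}}{\prod_i (q)_{\mathbf{d}_i}}\,x^{\mathbf{d}},
\]
the loop and cross-vertex factors $(-q^{1/2})^{m_{i,j}\mathbf{d}_i\mathbf{d}_j}$ couple the summation variables and obstruct any factorisation into one-variable sums; symmetry of $\chi$ alone cannot make these terms ``pair up and cancel''. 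Your reduction only goes through when all $m_{i,j}=0$, i.e.\ when $Q$ has no arrows.

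The paper's route is entirely different and relies on external geometric input. It identifies $\fg_Q$ with the positive half $\mathfrak{L}_Q^+$ of the coefficient algebra of a free vertex Lie algebra, and then proves $A_Q(x,q)\,P(U(\fg_Q)^\vee,x,q)=1$ (Theorem~\ref{th:char-symmetry}) by realising $U(\fg_Q)^\vee$ as a limit, as $\bw\to\infty$, of the CoHA-modules $\M_\bw$ whose Poincar\'e series are computed in \cite{dotsenko_mozgovoy}. This is not circular with the first theorem: both theorems rest on Theorem~\ref{th:char-symmetry}, which is established independently. Numerical Koszulness then follows immediately from the change of variables $A_Q(x,q)=P(\mathcal{A}_Q,q^{1/2}x,q)$ of Proposition~\ref{prop:poincare series}.
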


It is known that each Koszul algebra is numericaly Koszulness, but the converse is not true in general. However, this result is a strong evidence towards the conjecture that the algebras $\mathcal{A}_Q$ are always Koszul. At the moment, we can not prove this conjecture in full generality, but we managed to classify all quivers $Q$ for which the algebra $\mathcal{A}_Q$ has a quadratic Gr\"obner basis of relations for the natural ordering of variables $a_{i,k}$ that prioritises the value of $k$ in comparing the generators, and all quivers $Q$ for which the defining relations of our algebra form a regular sequence, thus exhausting two most obvious criteria of Koszulness.    

One of the intermediate steps of our proof uses the vertex operator algebra approach to DT invariants proposed in \cite{dotsenko_mozgovoy}, which in turn may be related to the more geometric approach of \cite{joyce_ringel,joyce_arxiv,latyntsev}. However, our results are not merely rephrasing those of \cite{dotsenko_mozgovoy}: there, the motivic DT invariants are related to \emph{negative} halves of the coefficient algebras of certain vertex Lie algebras, and in our approach, studying the \emph{positive} halves is crucial, and the two are not related in any obvious way. That surprising symmetry remains a mystery that demands further explanations. 
It is also worth exploring a different layer of Koszul duality at play: algebras $\mathcal{A}_Q$ are related to certain commutative vertex algebras, and algebras $\mathfrak{g}_Q$ are related to certain vertex Lie algebras, and the two seem to be Koszul dual as vertex algebras as well. It is interesting to remark that the corresponding commutative vertex algebras and vertex Lie algebras are free in an appropriate sense (for certain locality functions on the sets of generators \cite{MR843307,Ro}), and so the Koszul duality between the algebras $\mathcal{A}_Q$ and $\mathfrak{g}_Q$ appears to mimic the classical boson-fermion correspondence in a way that appears quite different from that recently described in the recent paper \cite{davison}. It is also worth noting that if $Q$ is obtained from a simple graph $G$ by ``doubling'' (putting an arrow $i\to j$ whenever $i$ and $j$ are connected by an edge), the algebra $\mathcal{A}_Q$ may be interpreted as the coordinate algebra of the arc space \cite{MR2354631,MR1381967} of the variety defined by quadratic monomials corresponding to edges of $G$. Those algebras are studied in the recent papers \cite{bringmann2021graph,MR3456046,jenningsshaffer2020qseries,Li,li2020jet} with lattice vertex algebras as one of the main tools. It would be desirable to examine the algebras $\mathcal{A}_Q$ for other quivers in the context of the study of relationships between arc spaces and vertex algebras \cite{ArMo}. These observations will be addressed elsewhere.

\subsection*{Structure of the paper. }
The paper is organized as follows. In Section \ref{rec}, we give the necessary recollections on DT invariants, quadratic algebras, and criteria of Koszulness. 
In Section \ref{sec:quad-alg}, we define the quadratic algebras $\mathcal{A}_Q$ and establish a relationship between their Poincar\'e series and the motivic generating functions.
In Section \ref{sec:dual}, we describe the Koszul dual Lie superalgebra $\mathfrak{g}_Q$, and relate it to the known combinatorics of DT invariants for one-vertex quivers. 
In Section \ref{sec:bar-and-dt}, we define an action of the Weyl algebra on $\mathfrak{g}_Q$, and use it to prove the main results of the paper. 
In Section \ref{koszulness}, we show that the algebra $\mathcal{A}_Q$ is always numerically Koszul, and classify the quivers for which it can be proved to be Koszul using one of the standard methods.
In the appendix, we prove auxiliary linear algebra results used in the paper. 

\subsection*{Acknowledgements. } We thank Sergey Mozgovoy for useful discussions, and especially for suggesting that Theorem \ref{th:char-symmetry} must be true. 

V.~D. was supported by the University of Strasbourg Institute for Advanced Study (USIAS) through the Fellowship USIAS-2021-061 within the French national program ``Investment for the future'' (IdEx-Unistra), and by the French national research agency project ANR-20-CE40-0016. M.~R. is supported by the DFG SFB-TRR 191 ``Symplectic structures in geometry, algebra and dynamics''. E.F. was partially supported by the grant RSF 19-11-00056. The study has been partially funded within the framework of the HSE University Basic Research Program.

\section{Recollections}\label{rec}

All vector spaces and chain complexes in this article are defined over the field of rational numbers. All chain complexes are homologically graded, with the differential of degree $-1$. 

Throughout the article, $Q$ denotes a finite quiver with the set of vertices $Q_0$ and the set of arrows~$Q_1$. 
We shall assume that $Q$ is symmetric, that is, the number of arrows from $i$ to $j$ is equal to the number of arrows from $j$ to $i$ for all $i,j\in Q_0$. 
We denote by $L$ the free abelian group $\mathbb{Z}^{Q_0}$, and denote its standard basis elements by $\alpha_i$. The \emph{Euler form} of $Q$ is the bilinear form on $L$ defined as
$$\chi({\bf d},{\bf e})=\sum_{i\in Q_0}\mathbf{d}_i\mathbf{e}_i-\sum_{(a\colon i\rightarrow j) \in Q_1}\mathbf{d}_i\mathbf{e}_j.$$
Under our assumption, the Euler form is symmetric.

\subsection{Graded vector spaces and algebras}

Most vector spaces considered in this article are of the form
 \[
V=\bigoplus_{\mathbf{d}\in L}V_\mathbf{d}, \quad V_\mathbf{d}=\bigoplus_{(\mathbf{d},n)\in L\times\mathbb{Z}}V_\mathbf{d}^n .
 \]
We consider the category $\mathrm{Vect}^{L\times\mathbb{Z}}$ of such vector spaces, with morphisms being maps of degree zero. The category $\mathrm{Vect}^{L\times\mathbb{Z}}$ is monoidal, with the monoidal structures given by the tensor product $V\otimes W$ defined by
 \[
(V\otimes W)_\mathbf{d}^n=\coprod_{(\mathbf{d'},n')+(\mathbf{d''},n'')=(\mathbf{d},n)}V_\mathbf{d'}^{n'}\otimes W_{\mathbf{d''}}^{n''}.
 \]
Moreover, we can use the Koszul sign rule to define a braiding $\sigma\colon V\otimes W\to W\otimes V$ by the formula $\sigma(v\otimes w)=(-1)^{n'n''} w\otimes v$, for $v\otimes w\in V_\mathbf{d'}^{n'}\otimes W_{\mathbf{d''}}^{n''}$; this braiding makes the category $\mathrm{Vect}^{L\times\mathbb{Z}}$ symmetric monoidal. Finally, we shall use graded duals; for an object $V$, its graded dual $V^\vee$ is defined by the formula
 $$
V^\vee=\bigoplus_{\mathbf{d}\in L} V_\mathbf{d}^\vee,\qquad
V_\mathbf{d}^\vee=\bigoplus_{n\in\mathbb{Z}}(V_{\mathbf{d}}^{-n})^*.
 $$

We shall work with various algebras in the category $\mathrm{Vect}^{L\times\mathbb{Z}}$, specifically, associative, commutative, and Lie algebras. One can either define them directly, using the tensor product to talk about the structure operations and using the braiding to implement permutations of arguments, or, alternatively, one may note that the category $\mathrm{Vect}^{L\times\mathbb{Z}}$ contains the category $\mathrm{Vect}$ as a full symmetric monoidal subcategory of objects of degree zero, and so one may talk about objects in $\mathrm{Vect}^{L\times\mathbb{Z}}$ that are algebras over the classical operads $\mathsf{Ass}$, $\mathsf{Com}$, and $\mathsf{Lie}$ in $\mathrm{Vect}$. In particular, as in the case of $\mathrm{Vect}$, the free associative algebra generated by an object $X$ is the tensor algebra $\mathsf{T}(X)=\coprod_{n\ge0} X^{\otimes n}$, and the free commutative algebra generated by an object $X$ is the symmetric algebra $S(X)=\coprod_{n\ge0} (X^{\otimes n})_{\Sigma_n}$. When using the braiding defined via the Koszul sign rule, it is not uncommon to refer to the corresponding notions of algebras as associative (commutative, Lie) \emph{superalgebras}. We hope that the reader is not too perturbed by the fact that, under our convention, these algebras are just associative (commutative, Lie) algebras, just in a different symmetric monoidal category.

\subsection{Poincar\'e series}\label{sec:PoSe}

Let $V$ be a $\mathbb{Z}$-graded vector space with finite-dimensional components.  The \emph{Poincar\'e series} $P(V,q)$ is defined by the formula
 \[
P(V,q)=\sum_{k\in\bZ}\dim (V^k)(-q^{\frac12})^{-k}.
 \]
(Division by two corresponds to the standard convention used for cohomological Hall algebras, and using the exponent $-k$ comes from working with homologically graded vector spaces.) In general, this expression is an element of the vector space of doubly infinite Laurent series $\mathbb{Q}[[q^{\pm\frac12}]]$; in our work, we shall only deal with situations where it is finite on one of the sides, so that it belongs to one of the fields of formal Laurent series $\mathbb{Q}((q^{\frac12}))=\mathbb{Q}[[q^{\frac12}]][q^{-\frac12}]$ and $\mathbb{Q}((q^{-\frac12}))=\mathbb{Q}[[q^{-\frac12}]][q^{\frac12}]$. 

Let us consider formal variables $x_i$, $i\in Q_0$, and denote, for ${\bf d}\in L$, $x^{\bf d}=\prod_{i\in Q_0} x_i^{{\bf d}_i}$. 
To an object $V$ of $\mathrm{Vect}^{L\times\mathbb{Z}}$ with finite-dimensional components, we shall associate its \emph{Poincar\'e series} $P(V,x,q)$:
 \[
P(V,x,q)=\sum_{\mathbf{d}\in L} P(V_\mathbf{d},q)x^\mathbf{d}
=\sum_{\mathbf{d}\in L}\sum_{k\in\mathbb{Z}}
\dim (V_\mathbf{d}^k)(-q^{\frac12})^{-k}x^\mathbf{d}.
 \]
An important property of the Poincar\'e series is the following invariance property. If $V$ is a chain complex (with a differential $\delta$ of degree $(0,-1)$), its Poincar\'e series is the same as the Poincar\'e series of its homology: 
 \[
P(V,x,q)=P(H(V,\delta),x,q).
 \]
For certain objects in $\mathrm{Vect}^{L\times\mathbb{Z}}$, we shall use the fact that Poincar\'e series behave well with respect to various operations, including tensor products. For that, it is important to consider a smaller category. Our choice is to consider the subcategory $\mathcal{C}$ consisting of objects 
$$
V=\bigoplus_{\mathbf{d}\in\mathbb{Z}_{\ge0}^{Q_0}}V_\mathbf{d}
=\bigoplus_{\mathbf{d}\in\mathbb{Z}_{\ge0}^{Q_0}}\bigoplus_{k\in\mathbb{Z}}V_\mathbf{d}^k$$
such that all components $V_\mathbf{d}^k$ are finite-dimensional and $V_\mathbf{d}^k=0$ for $k\gg0$.
The Poincar\'e series is a ring homomorphism 
 \[
K_0(\mathcal{C})\to R_Q=\mathbb{Q}((q^{\frac12}))[[x_i\colon i\in Q_0]].
 \]
We emphasize that not all objects we consider belong to $\mathcal{C}$, but whenever we use the compatibility of Poincar\'e series with tensor products of vector spaces, we shall restrict ourselves to objects from $\mathcal{C}$.

\subsection{Plethystic exponential}

We shall need an operation on formal power series called the plethystic exponential. We recall one of its definitions which is not the most general one but is well suited for the ring $R_Q=\mathbb{Q}((q^{\frac12}))[[x_i\colon i\in Q_0]]$. For each $n\ge 1$, we define the map 
 \[
p_n\colon R_Q\to R_Q 
 \]
by the formula 
 \[
p_n(f)=\left.f\right|_{x_i\to x_i^n, q^{\frac12}\to q^{\frac{n}2}}.
 \]
If we denote by $\mathfrak{m}_Q$ the maximal ideal of $R_Q$, the plethystic exponential is the group isomorphism $\mathrm{Exp}\colon(\mathfrak{m}_Q,+)\to(1+\mathfrak{m}_Q,\cdot)$ defined 
by 
 \[
\mathrm{Exp}(f)=\exp\left(\sum_{n\ge 1}\frac{p_n(f)}{n}\right).
 \] 
Alternatively, this isomorphism is uniquely defined by the property
$$\mathrm{Exp}(q^k x^{\mathbf{d}})=\sum_{n\ge0}q^{nk}x^{n\mathbf{d}},\qquad k\in\frac12\mathbb{Z},\,\mathbf{d}\in\mathbb{Z}_{\ge0}^{Q_0}\setminus\{0\}.$$

The plethystic exponential is used to compute the Poincar\'e series of the symmetric algebra of an object of $\mathcal{C}$. Namely, if $V$ is an object in $\mathcal{C}$ with $V_0=0$ (so that $P(V,x,q)\in\mathfrak{m}_Q$), then we have
 \[
P(S(V),x,q)=\mathrm{Exp}(P(V,x,q)).
 \] 
Using the definition of a $\lambda$-ring, one can re-state this property saying that the Poincar\'e series is a homomorphism of $\lambda$-rings from $K_0(\mathcal{C})$ to $R_Q$, see \cite{getzler_mixed,Knutson} for details.

\subsection{Donaldson--Thomas invariants of symmetric quivers}

For a quiver $Q$, its \emph{motivic generating series} $A_Q(x,q)$ is defined by the formula 
$$A_Q(x,q)=\sum_{{\bf d}\in\mathbb{Z}_{\ge 0}^{Q_0}}\frac{(-q^{\frac12})^{-\chi({\bf d},{\bf d})}x^{\bf d}}{\prod_{i\in Q_0} (q^{-1})_{\mathbf{d}_i}},$$
where we denote $(q)_n=(1-q)\cdot\dots\cdot(1-q^n)$ (we note that $A_Q(x,q^{-1})\in R_Q$). This series is the Poincar\'e series of a certain graded vector space associated to the quiver $Q$. Namely, let us denote by $R_{\bf d}(Q)$ the space of representations of $Q$ of dimension vector ${\bf d}$, and by $G_{\bf d}$ the basis change group. Then we have 
 \[
P(\mathcal{H}_Q,x,q)=A_Q(x,q),
 \]
where $\mathcal{H}_Q$ is the shifted direct sum of equivariant cohomology of representation spaces
$$\mathcal{H}_Q=\bigoplus_{\bf d} H^{*+\chi({\bf d},{\bf d})}_{G_{\bf d}}(R_{\bf d}(Q),\mathbb{Q}).$$
The \emph{refined Donaldson--Thomas invariants} ${\rm DT}_{\bf d}(q)$ of $Q$ are defined by
$$A_Q(x,q^{-1})=\mathrm{Exp}\left(\frac{1}{1-q}\sum_{\mathbf{d}\in\mathbb{Z}_{\ge 0}^{Q_0}}(-1)^{\chi(\mathbf{d},\mathbf{d})} {\rm DT}_{\bf d}(q^{-1})x^{\bf d}\right).$$
(Note that $A_Q(x,q)$ has coefficients in $\mathbb{Q}((q^{-\frac12}))$, so we pass to $A_Q(x,q^{-1})$ in order to work with power series with coefficients in $\mathbb{Q}((q^\frac12))$, for which we defined the plethystic exponential.) 
Evaluating $A_Q(x,q^{-1})$ at Lefshetz motive, one obtains the motivic Donaldson--Thomas series of $Q$ defined in~\cite{kontsevich_cohomological}. 

Kontsevich and Soibelman defined \cite{kontsevich_cohomological} the cohomological Hall algebra of $Q$ by endowing $\mathcal{H}_Q$ with a parabolic induction type associative product. Using equivariant localization, this product admits a purely algebraic description using a certain version of the Feigin--Odesskii shuffle product \cite{MR1736164}. More specifically, for any $\bd\in\mathbb{Z}_{\ge 0}^{Q_0}$, let us define
$$\Lambda_\mathbf{d}=\mathbb{Q}[z_{i,r}\colon i\in Q_0,\, 1\le r\le \mathbf{d}_i]^{\Sigma_\mathbf{d}},\qquad
\Sigma_\mathbf{d}=\prod_{i\in Q_0}\Sigma_{\mathbf{d}_i}.$$
Then $H^*_{G_{\bf d}}(\mathrm{pt})\cong \Lambda_\mathbf{d}$, and
 \[
\mathcal{H}_{Q,\mathbf{d}}=\Lambda_\mathbf{d}[-\chi(\mathbf{d},\mathbf{d})].
 \]

Efimov showed in \cite{Efimov} that, after a certain (non-canonical) sign twist of the shuffle product of $\mathcal{H}_Q$, there exists an $L\times\mathbb{Z}$-graded vector space $V$ with finite-dimensional components $V_\mathbf{d}=\bigoplus_{n\ge 0}V_{\mathbf{d},n}$ for which
 \[
\mathcal{H}_Q\simeq S(V[z])
 \]
as commutative algebras (where $z$ is placed in degree $(0,2)$). Comparing the Poincar\'e series, this implies that ${\rm DT}_{\bf d}(q)\in\mathbb{N}[q^{\pm 1/2}]$.  


\subsection{Koszul algebras}\label{sec:koszulalg}

We shall now recall the basics of the theory of Koszul duality for associative and commutative algebras; we invite the reader to consult \cite{LV,PP} for further details.
All vector spaces in this section belong to the category $\mathrm{Vect}^{L\times\mathbb{Z}}$ or even to its subcategory $\mathcal{C}$ discussed in Section \ref{sec:PoSe}. We use a standard convention for suspensions, according to which the homological degree shifts are implemented by the symbol $s$ of homological degree $1$ (that is, of $L\times\mathbb{Z}$-degree $(0,1)$). 

All associative algebras $\mathcal{A}$ we consider are \emph{quadratic}, that is, presented by generators and relations as $\mathcal{A}=T(V)/(R)$, where $R\subset V^{\otimes 2}$. For each such algebra, one may define the Koszul dual algebra 
$\mathcal{A}^!$ as follows: 
\[
\mathcal{A}^!=T((sV)^\vee)/(R^\perp),
\]
where $R^\perp$ is the annihilator of $R$ under the natural pairing between $((sV)^\vee)^{\otimes 2}$ and $V^{\otimes 2}$. (Note that our definition is slightly different from the usual one \cite{PP} in that we insist on considering the Koszul dual algebra with the shifted cohomological degree; this is important since for our algebras of interest the space of generators will already have its internal homological degree, and so there is no reason to pay extra price for shifting everything back after dualising.) It is easy to see that for a (super)commutative associative algebra $\mathcal{A}$, the Koszul dual $\mathcal{A}^!$ is isomorphic to the universal enveloping algebra of a certain Lie (super)algebra $\mathfrak{g}(\mathcal{A})$; we shall refer to that Lie algebra as the \emph{Koszul dual Lie algebra} of $\mathcal{A}$.

One can show that $\mathcal{A}^!$ is isomorphic to the subalgebra of the Yoneda algebra $\mathrm{Ext}^\bullet(\mathbb{Q},\mathbb{Q})$ generated by $\mathrm{Ext}^1$; here $\mathbb{Q}$ is considered as an $\mathcal{A}$-module on both sides via the augmentation $\mathcal{A}\to\mathbb{Q}$ annihilating all generators. An algebra $\mathcal{A}$ is said to be Koszul if the inclusion $\mathcal{A}^!\hookrightarrow\mathrm{Ext}^\bullet(\mathbb{Q},\mathbb{Q})$ is an isomorphism. It is also important for us that there exists a chain complex $\mathsf{K}(A)$ with the underlying graded vector space $\mathcal{A}\otimes(\mathcal{A}^!)^\vee$ which is a resolution of $\mathbb{Q}$ if and only if the algebra $\mathcal{A}$ is Koszul. In particular, for a Koszul algebra $\mathcal{A}$, we have 
\begin{equation}\label{eq:Froberg}
P(\mathcal{A},x,q)P((\mathcal{A}^!)^\vee,q^{-\frac12}x,q)=1
\end{equation}
(the factor $q^{-\frac12}$ accounts for the fact that we consider the generating functions that also count the homological degrees, and the Koszul dual algebra is generated by $(sV)^\vee$). An algebra satisfying this property is said to be numerically Koszul; thus a Koszul algebra is numerically Koszul, but the converse is not true in general. 

\subsection{Criteria of Koszulness}

There are two useful criteria of Koszulness which we shall now recall. 

Let us choose a basis $X$ in the space of generators $V$. An ordering of monomials (that is, of words in the alphabet $X$) in the tensor algebra $T(V)$ is said to be admissible if it is a total well ordering, and if the product is an increasing function of its arguments: replacing one of the monomials in a product by a greater one increases the result. Given an admissible ordering of monomials in the tensor algebra, one says that a subset $G$ of an ideal $I\subset T(V)$ is a (noncommutative) Gr\"obner basis if the leading monomial of every element of $I$ is divisible by a leading monomial of an element of~$G$. The primary reason to look for Gr\"obner bases is dictated by considerations of linear algebra: a Gr\"obner basis for an ideal gives extensive information on the quotient modulo $I$. More precisely, a monomial is said to be normal with respect to $G$ if it is not divisible by any of the leading monomials of elements of~$G$. The normal monomials with respect to any set of generators of an ideal $I$ always form a spanning set of the quotient modulo $I$. However, a generating set $G$ of $I$ is a Gr\"obner basis of $I$ if and only if cosets of monomials that are normal with respect to $G$ form a basis of the quotient modulo $I$, see \cite[Prop.~2.3.3.5]{BD}. Among different choices of a Gr\"obner basis, there is the so called reduced Gr\"obner basis, for which no leading term is a divisor of another one, and all non-leading terms are normal; for each admissible ordering, there exists a unique reduced Gr\"obner basis. 

\begin{prop}[{\cite[Chapter 4, Theorem~3.1]{PP}}]\label{prop:KoszulGB}
An algebra $\mathcal{A}$ with a Gr\"obner basis consisting of elements of weight~$2$ is Koszul. 
\end{prop}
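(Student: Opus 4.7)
The plan is to reduce the statement to the well-known fact that quadratic monomial algebras are Koszul, using the deformation coming from the Gr\"obner basis. The approach is classical (this is essentially Priddy's PBW $\Rightarrow$ Koszul paradigm in the Gr\"obner basis language), so the plan is to organise the standard ingredients rather than to invent new ones.

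First I would introduce the monomial algebra $\mathcal{A}_0=T(V)/(\overline{G})$, where $\overline{G}$ is the set of leading monomials of a quadratic Gr\"obner basis $G$ of the ideal of relations of $\mathcal{A}$. Since $G$ consists of quadratic elements, $\mathcal{A}_0$ is a quadratic monomial algebra, and by the defining property of a Gr\"obner basis recalled just above, cosets of monomials normal with respect to $G$ (equivalently, with respect to $\overline{G}$) form a basis of both $\mathcal{A}$ and $\mathcal{A}_0$. In particular $P(\mathcal{A},x,q)=P(\mathcal{A}_0,x,q)$, and moreover the admissible ordering induces a filtration on $\mathcal{A}$ whose associated graded is exactly $\mathcal{A}_0$.

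Next I would establish Koszulness of $\mathcal{A}_0$. This is independent of the Gr\"obner story: for a quadratic monomial algebra, one can write down an explicit small resolution of the trivial module (for instance, the bar resolution admits a retract onto the subcomplex spanned by tensor products of leading monomials and their ``overlaps''), and verify by inspection that the bidegrees $(n,k)$ of nonzero terms in $\mathrm{Ext}^n_{\mathcal{A}_0}(\mathbb{Q},\mathbb{Q})$ satisfy $k=n$, which is the Koszulness condition. Equivalently, one checks that the dual Koszul complex $\mathcal{A}_0\otimes(\mathcal{A}_0^!)^\vee$ is acyclic, using that for monomial algebras the components of $(\mathcal{A}_0^!)^\vee$ are spanned by chains of overlapping relations.

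Finally, I would transfer Koszulness from $\mathcal{A}_0$ to $\mathcal{A}$. Using the filtration above, the Koszul complex of $\mathcal{A}$ is a filtered complex whose associated graded is the Koszul complex of $\mathcal{A}_0$; since the latter is acyclic in positive homological degrees by the previous step, a standard spectral sequence (or filtration) argument shows that the Koszul complex of $\mathcal{A}$ is acyclic in positive homological degrees as well, and hence $\mathcal{A}$ is Koszul. The main obstacle, and the step I would spend the most care on, is the Koszulness of the quadratic monomial algebra $\mathcal{A}_0$; everything else is essentially bookkeeping with the Gr\"obner basis and a routine filtration argument. Since this proposition is quoted from \cite{PP}, I would most likely simply refer to the proof there rather than reproduce all the combinatorics of overlaps.
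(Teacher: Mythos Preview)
Your proposal is correct and outlines the standard argument (reduce to the associated quadratic monomial algebra, prove that monomial quadratic algebras are Koszul, then lift via a filtration/spectral sequence). The paper does not give its own proof of this proposition at all: it is stated purely as a citation of \cite[Chapter~4, Theorem~3.1]{PP}, with no argument provided, exactly as you anticipated in your final sentence.
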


Basic linear algebra implies that if $\mathcal{A}$ has a Gr\"obner basis consisting of elements of weight $2$ (for a certain admissible ordering of monomials), then the Koszul dual algebra $\mathcal{A}^!$ has a quadratic Gr\"obner basis for the ordering obtained by the opposite ordering of monomials in elements of the dual basis.

The next criterion is only applicable to commutative algebras. Recall that if $R$ is a commutative ring and $M$ is an $R$-module, an element $f$ of even homological degree is said to be $M$-regular if $f$ is injective on $M$. This definition has an odd counterpart \cite{MR739100}: an element $f$ of odd homological degree is said to be $M$-regular if the kernel $f$ on $M$ coincides with the image of $f$ on $M$ (in other words, is also trivial in the obvious sense). A sequence of elements $f_1,\ldots,f_k$ of $R$ is said to be $M$-regular if $f_i$ is regular on $M/(f_1,\ldots,f_{i-1})M$ for all $i=1,\ldots,k-1$.
For $M=R$ we will use just the word ``regular''.

\begin{prop}\label{prop:KoszulRS}
A commutative algebra $A$ which is the quotient of some polynomial algebra $R$ by an ideal $(f_1,\ldots,f_m)$ with quadratic relations $f_1,\ldots,f_m$ that form a regular sequence is Koszul.
\end{prop}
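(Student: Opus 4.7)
The plan is to exhibit a linear free resolution of $\mathbb{Q}$ as an $A$-module, which is equivalent to Koszulness of $A$. The polynomial algebra $R$ is itself Koszul, being the free (super)commutative algebra on its space of generators, and the regularity of the quadratic sequence $f_1,\ldots,f_m$ implies that the Koszul complex
\[
E_\bullet = R \otimes \Lambda\langle e_1, \ldots, e_m\rangle, \qquad de_i = f_i,
\]
with each $e_i$ placed in homological degree $1$ and internal weight $2$, is a free $R$-module resolution of $A$.

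Next, I would combine this with the classical Koszul resolution of $\mathbb{Q}$ over $R$ to produce a free $A$-resolution of $\mathbb{Q}$, via Tate's construction: in characteristic zero, the minimal such resolution has underlying graded $A$-module
\[
A \otimes \Lambda(y_1, \ldots, y_n) \otimes S(s_1, \ldots, s_m),
\]
where the $y_i$ are dual to the generators of $R$ (in homological degree $1$, internal weight $1$) and the $s_j$'s correspond to the regular sequence (in homological degree $2$, internal weight $2$). The differential is built recursively, sending $y_i$ to the $i$-th generator of $R$ viewed in $A$, and each $s_j$ to a cycle whose class represents that of $f_j$ one Koszul level below.

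The key observation is linearity: every generator lies on the diagonal of the bigrading ($y_i$ at $(1,1)$ and $s_j$ at $(2,2)$), and the differential preserves this diagonal structure; consequently every class $s_j^{k}y_{i_1}\cdots y_{i_\ell}$ sits at bidegree $(2k+\ell,\,2k+\ell)$. Hence the resolution is concentrated on the diagonal, giving $\mathrm{Tor}^A_{p, q}(\mathbb{Q}, \mathbb{Q}) = 0$ for $p \neq q$, which is precisely Koszulness of $A$.

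The main obstacle is verifying acyclicity of the Tate resolution and setting up its differential carefully in the super setting, where sign conventions and (in positive characteristic) divided powers require attention; in our situation, working over $\mathbb{Q}$ eliminates the latter complication. This is classical in the commutative case via Tate's theorem on complete intersections, and the supercommutative extension follows from analogous computations. Alternatively, one may simply invoke the general result that the quotient of a Koszul algebra by a regular sequence of central quadratic elements is again Koszul, since $R$ is Koszul and all its elements are central.
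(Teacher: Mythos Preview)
Your proposal is correct and follows essentially the same route as the paper. The paper's two-line proof simply cites the Koszul complex criterion of regularity and uniqueness of minimal resolutions; you have unpacked this into the explicit Tate resolution $A\otimes\Lambda(y_1,\ldots,y_n)\otimes S(s_1,\ldots,s_m)$ and verified its linearity directly, which is the content behind the paper's terse citation. Your closing remark that one may instead invoke the general fact ``quotient of a Koszul algebra by a central regular quadratic sequence is Koszul'' is an equally valid shortcut and is in the same spirit as the paper's appeal to standard machinery.
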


\begin{proof}
This immediately follows from the Koszul complex criterion of regularity \cite[Sec.~2.3]{MR1005697} and uniqueness of minimal resolutions of weight graded algebras.
\end{proof}

\subsection{Gr\"obner--Shirshov bases for Lie algebras}

We shall make use of the technique essentially going back to Shirshov for determining bases of Lie algebras presented by generators and relations.
Let us briefly recall the corresponding definitions; we refer the reader to \cite{MR1700511,MR1733167} for details. 

Suppose that $X$ is a set equipped with a well-order. We can consider the free monoid $\langle X\rangle$ generated by $X$, consisting of all words in the alphabet $X$ with the associative product of each two elements given by concatenation. We shall moreover assume that there is a parity function $X\to \mathbb{Z}/2\mathbb{Z}=\{0,1\}$ allowing to write $X=X_0\sqcup X_1$; we extend parity to $\langle X\rangle$ additively. A non-empty word $w$ is said to be a \emph{Lyndon--Shirshov word} if it is the strictly largest one (with respect to the graded lexicographic ordering of $\langle X\rangle$ induced by the order of $X$) among all its cyclic shifts. 
Furthermore, a non-empty word $w$ is said to be a \emph{super-Lyndon--Shirshov word} if it is a Lyndon--Shirshov word or a square of an odd Lyndon--Shirshov word. 

We can also consider the free magma $M(X)$ generated by $X$, consisting of all nonassociative words in the alphabet $X$ with the product of each two elements given by enclosing their concatenation in brackets. We extend parity to $M(X)$ additively. There is an obvious morphism 
 \[
M(X)\to \langle X\rangle,\quad w\mapsto \underline{w},
 \]
which erases all brackets. A non-empty nonassociative word $w$ is said to be a \emph{Lyndon--Shirshov monomial} if either $w\in X$ or 
\begin{itemize}
\item if $w=(w_1w_2)$ then $w_1$ and $w_2$ are Lyndon--Shirshov monomials and $\underline{w_1}$ is lexicographically greater than $\underline{w_2}$,
\item if $w=((w_1w_2)w_3)$ then $\underline{w_3}$ is lexicographically greater than $\underline{w_2}$ or equal to it. 
\end{itemize}
A non-empty nonassociative word $w$ is said to be a \emph{super-Lyndon--Shirshov monomial} if it is a Lyndon--Shirshov monomial or a square of an odd Lyndon--Shirshov monomial.

It is known that for every super-Lyndon--Shirshov monomial $w$, the associative word $\underline{w}$ is a super-Lyndon--Shirshov word, and that for any super-Lyndon--Shirshov word there is a unique bracketing making it a super-Lyndon--Shirshov monomial. Moreover, let $\mathfrak{g}$ be a Lie algebra with generators $X$ and some relations $R$, and let us interpret elements of $R$ as linear combinations of commutators in the free algebra generated by $X$, giving a presentation of the universal envelope $U(\mathfrak{g})$. The Lie algebra $\mathfrak{g}$ has a basis of cosets of super-Lyndon--Shirshov monomials $w$ for which $\underline{w}$ is normal with respect to $R$ if and only if $R$ is a Gr\"obner basis of defining relations of $U(\mathfrak{g})$. 

\section{Quadratic algebras associated to symmetric quivers}\label{sec:quad-alg}

Let $Q$ be a symmetric quiver. The protagonist of the paper is a certain associative algebra associated to $Q$. We denote by $m_{i,j}$ the number of arrows from $i$ to $j$ in $Q$. 

\begin{definition}
The algebra $\mathcal{A}_Q$ is defined as follows. Its space of generators $V_Q$ has a basis $a_{i,k}$ with $i\in Q_0$, $k\ge 0$; we set $\deg(a_{i,k})=(\alpha_i,-2k-m_{i,i})\in L\times\mathbb{Z}$, so $V_Q$ is an object of $\mathcal{C}$. There are two groups of defining relations of $\mathcal{A}_Q$:
\begin{gather*}
a_{i,k_1}a_{j,k_2}=(-1)^{m_{i,i}m_{j,j}} a_{j,k_2}a_{i,k_1} \quad \text{ for all } i,j,k_1,k_2,\\
\sum_{k_1+k_2=k} \binom{k_2}{p} a_{i,k_1} a_{j,k_2} =0\quad \text{ for all } k\ge 0 \text{ and } 0\le p<m_{i,j}.  
\end{gather*}
\end{definition}

Note that all relations of the algebra $\mathcal{A}_Q$ are manifestly homogeneous, and therefore that algebra is itself an object of $\mathcal{C}$. The first group of relations merely state that $\mathcal{A}_Q$ is (super--)commutative. Relations of the second group should be thought of as follows. Consider the formal generating series 
\[
a_i(z)=\sum_{k\ge 0} a_{i,k}z^k.
\] 
Then the relations
 \[
\sum_{k_1+k_2=k} \binom{k_2}{p} a_{i,k_1} a_{j,k_2} =0\quad \text{ for all } k\ge 0   
 \]
are, up to the factor $\frac{1}{p!}$, the coefficients of the power series  
 \[
a_i(z)\frac{d^p\phantom{z}}{dz^p}a_j(z)=0 . 
 \]
It is also easy to show that this group of relations is equivalent to a bigger more symmetric one 
 \[
\frac{d^p\phantom{z}}{dz^p}a_i(z)\frac{d^q\phantom{z}}{dz^q}a_j(z)=0 \text{ for } 0\le p+q<m_{ij}.
 \] 

\begin{rem}
In a particular case where for all $i,j\in Q_0$ there is at most one arrow from $i$ to $j$ (to such quiver we may associate in an obvious way a graph $\Gamma$ without multiple edges), the algebra $\mathcal{A}_Q$ was previously studied in the literature as the coordinate algebra defining the arc scheme of the graph scheme of $\Gamma$, see, for example, \cite{bringmann2021graph,MR3456046,jenningsshaffer2020qseries,Li,li2020jet}. 
\end{rem}

Let us give several examples of algebras $\mathcal{A}_Q$. 

\begin{example}\leavevmode
\begin{enumerate} 
\item Let $Q$ be the $A_1$ quiver. Then the algebra $\mathcal{A}_Q$ is generated by $a_0,a_1,\dots$; the only relations are those of the first group, and our algebra is the polynomial algebra $\mathbb{Q}[a_0,a_1,\dots]$.
\item Let $Q$ be the quiver with one vertex and one loop. Then the algebra $\mathcal{A}_Q$ is generated by $a_0,a_1,\dots$. Relations of the first group say that the generators anti-commute, and the relation $a(z)a(z)=0$ of the second group is redundant, since it follows from anti-commutativity. Thus, our algebra is the infinite Grassmann algebra $\Lambda(a_0,a_1,\dots)$. 
\item Let $Q$ be the quiver with one vertex and two loops. Then the algebra $\mathcal{A}_Q$ is generated by commuting generators $a_0, a_1, \dots$ modulo the relations 
\begin{gather*}
(a_0+a_1z+a_2z^2+\dots)^2=0,\\
(a_0+a_1z+a_2z^2+\dots)\frac{d\phantom{z}}{dz}(a_0+a_1z+a_2z^2+\dots)=0.
\end{gather*}
The second relation follows from the first one by differentiation, so our algebra is 
 \[
\frac{\mathbb{Q}[a_0,a_1,\dots]}{\left(a_0^2, 2a_0a_1, 2a_0a_2+a_1^2, \ldots, \sum\limits_{i+j=k} a_ia_j\right)}.
 \]
This algebra was studied in many papers in the context of level $1$ modules over the Lie algebra $\widehat{\mathfrak{sl}_2}$ (see e.g. \cite{FS}). 
\item Let $Q$ be the quiver with two vertices and one arrow in each of the two directions between these vertices. Then the algebra $\mathcal{A}_Q$ is generated by $a_{0,0}, a_{0,1},\dots$ and
$a_{1,0}, a_{1,1},\dots$. Relations of the first group say that the generators commute, and relations of the second group say that $a_0(z)a_1(z)=0$. This algebra was studied in \cite{MR3973134}, where an unexpected algebraic property of its nilradical was unravelled.
\end{enumerate}
\end{example}

We shall now describe a convenient interpretation of the graded dual space $\mathcal{A}_Q^\vee$ of the algebra $\mathcal{A}_Q$, inspired by the approach of \cite{FS} in the case of the algebra associated to the quiver with one vertex and two loops. (For the case of jet algebras of graph schemes, this result is established in \cite[Sec.~5.3]{Li}.) Recall that we denote $\Lambda_\mathbf{d}=\mathbb{Q}[z_{i,r}\colon i\in Q_0,\, 1\le r\le \mathbf{d}_i]^{\Sigma_\mathbf{d}}$.

\begin{prop}\label{prop:functional-realisation}
 Let $F_\mathbf{d}\in \mathbb{Q}[z_{i,r}\colon i\in Q_0,\, 1\le r\le \mathbf{d}_i]$ be defined by the formula
 \[
F_\mathbf{d}:=\prod_{i\in Q_0}\prod_{1\le r<r'\le \mathbf{d}_i}(z_{i,r}-z_{i,r'})^{m_{i,i}} \prod_{\substack{\{i,i'\}\in Q_0, i\ne i'\\ 1\le r\le \mathbf{d}_i,\\1\le r'\le \mathbf{d}_{i'}}}(z_{i,r}-z_{i',r'})^{m_{i,i'}}.
 \]
Then we have an isomorphism of graded vector spaces 
 \[
\mathcal{A}_{Q,\mathbf{d}}^\vee\cong F_\mathbf{d}\Lambda_{\mathbf{d}}[-\mathbf{m}\cdot\mathbf{d}], 
 \]
where $\mathbf{m}\cdot\mathbf{d}=\sum_{i\in Q_0} m_{i,i}\mathbf{d}_i$. 
\end{prop}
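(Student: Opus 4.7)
The plan is to realise the graded dual of $\mathcal{A}_Q$ as a space of (anti)symmetric polynomials and then translate the defining relations of $\mathcal{A}_Q$ into vanishing conditions along diagonals in the polynomial ring.

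First I would identify the ambient graded dual. Write $V_Q = \bigoplus_{i\in Q_0} V_i$, where $V_i$ is spanned by the generators $a_{i,k}$, $k \ge 0$, of parity $m_{i,i}\pmod 2$; then $S(V_Q)_\mathbf{d} = \bigotimes_{i\in Q_0} S^{\mathbf{d}_i}(V_i)$, with $S^\bullet$ meaning the symmetric power for even vertices and the exterior power for odd vertices. The pairing $\langle z^k, a_{i,k'}\rangle = \delta_{k,k'}$ on each $V_i^\vee$ and polarisation give
 \[
S(V_Q)_\mathbf{d}^\vee \cong \Bigl(\bigotimes_{i\in Q_0} \mathbb{Q}[z_{i,1},\ldots,z_{i,\mathbf{d}_i}]^{\pm\Sigma_{\mathbf{d}_i}}\Bigr)[-\mathbf{m}\cdot\mathbf{d}],
 \]
where $\pm$ means symmetric or antisymmetric invariants according to the parity of $m_{i,i}$. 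The grading shift $[-\mathbf{m}\cdot\mathbf{d}]$ is forced by the fact that the constant polynomial $1$ pairs with $\prod_i a_{i,0}^{\mathbf{d}_i}$, which has internal degree $-\mathbf{m}\cdot\mathbf{d}$.

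Next I would translate the second group of relations into vanishing conditions on a polynomial representative $F$ of a functional. The key calculation is the generating-function identity
 \[
\sum_{k_1+k_2=k}\binom{k_2}{p}[s^{k_1}t^{k_2}]F(s,t) = \frac{1}{p!}[z^{k-p}]\,\partial_t^p F(s,t)\bigr|_{s=t=z},
 \]
from which $F$ pairs trivially with $\sum_{k_1+k_2=k}\binom{k_2}{p} a_{i,k_1}a_{j,k_2}\cdot u$ for every monomial $u$ in the remaining slots and every $k\ge 0$ if and only if $\partial_{z_{j,r'}}^p F\bigr|_{z_{i,r}=z_{j,r'}}=0$ identically. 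Letting $p$ range over $0,\ldots,m_{i,j}-1$ and using the $\Sigma_{\mathbf{d}}$-(anti)symmetry to propagate the conditions to all pairs of slots, we conclude that $F$ must vanish to order $m_{i,j}$ on each diagonal $z_{i,r}=z_{j,r'}$ with $(i,r)\ne(j,r')$; since these divisors are pairwise coprime, $F$ is divisible by their product $F_\mathbf{d}$.

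Finally I would verify that the quotient $G = F/F_\mathbf{d}$ lies in $\Lambda_\mathbf{d}$ and that every such $G$ arises this way. Inside each vertex $i$, the factor $\prod_{r<r'}(z_{i,r}-z_{i,r'})^{m_{i,i}}$ has exactly the same $\Sigma_{\mathbf{d}_i}$-symmetry as $F$, while the cross-vertex factors $(z_{i,r}-z_{i',r'})^{m_{i,i'}}$ are symmetric in each group of variables separately; hence $G$ is symmetric in each $z_{i,1},\ldots,z_{i,\mathbf{d}_i}$. Conversely, for any $G \in \Lambda_\mathbf{d}$, the product $F_\mathbf{d}\cdot G$ has the correct parity and manifestly satisfies all the required diagonal vanishing conditions, and so represents an element of $\mathcal{A}_{Q,\mathbf{d}}^\vee$; the shift $[-\mathbf{m}\cdot\mathbf{d}]$ is inherited from the ambient identification. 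The main obstacle is the generating-function identity at the heart of the second step, which requires careful bookkeeping of the pairing between polynomials and monomials in the $a_{i,k}$ in the presence of (anti)symmetrisation; once it is in place, the remainder is a routine exercise in polynomial divisibility and degree tracking.
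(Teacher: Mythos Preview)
Your proof is correct and follows essentially the same route as the paper: identify the dual of the free supercommutative algebra in degree $\mathbf{d}$ with (anti)symmetric polynomials, then show that the second group of relations forces vanishing of order $m_{i,j}$ along each diagonal $z_{i,r}=z_{j,r'}$, hence divisibility by $F_\mathbf{d}$, and finally check that the quotient is $\Sigma_\mathbf{d}$-invariant. The paper packages the first step via the explicit evaluation map $\xi\mapsto\xi\bigl(\prod_i a_i(z_{i,1})\cdots a_i(z_{i,\mathbf{d}_i})\bigr)$ rather than abstractly dualising $S(V_Q)_\mathbf{d}$, but this is exactly your polarisation map; your generating-function identity is a more explicit version of the paper's one-line claim that the relations ``impose vanishing conditions on the diagonals''.
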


\begin{proof}
Note that the element $a_{i,k}z_{i,s}^k$ is of cohomological degree $-m_{i,i}$, so the expression 
 \[
\prod_{i\in Q_0} a_i(z_{i,1})\dots a_i(z_{i,\mathbf{d}_i})
 \]
is of cohomological degree $-\mathbf{m}\cdot\mathbf{d}$, and thus for each element 
 \[
\xi\in \mathcal{A}_{Q,\mathbf{d}}^\vee ,
 \]
the evaluation 
 \[
f_\xi=\xi \Bigl(\prod_{i\in Q_0} a_i(z_{i,1})\dots a_i(z_{i,\mathbf{d}_i})\Bigr)
 \]
is a map of graded vector spaces from $\mathcal{A}_{Q,\mathbf{d}}^\vee$ to the degree shifted polynomial ring 
 \[
\mathbb{Q}[z_{i,r}\colon  i\in Q_0,\, 1\le r\le \mathbf{d}_i][-\mathbf{m}\cdot\mathbf{d}]
 \] 
(we get a polynomial and not a power series since $\xi$ belongs to the graded dual). \\

Let us show that  $f_\xi\in F_\mathbf{d}\Lambda_{\mathbf{d}}[-\mathbf{m}\cdot\mathbf{d}]$. We note that the second group of relations of the algebra $\mathcal{A}_Q$ imposes vanishing conditions on the diagonals where pairs of variables $z_{i,r}$ coincide. First, for each $i\in Q_0$, the polynomial $f_\xi$ is manifestly divisible by $(z_{i,r}-z_{i,r'})^{m_{i,i}}$ for all $1\le r<r'\le \mathbf{d}_i$. Moreover, commutativity of generators ensures that $f_\xi$ is symmetric in variables $z_{i,1}$,\ldots, $z_{i,\mathbf{d}_i}$ for even $m_{i,i}$ and anti-symmetric for odd $m_{i,i}$. Thus, the ratio $f_\xi/\prod_{1\le r<r'\le\mathbf{d}_i}(z_{i,r}-z_{i,r'})^{m_{i,i}}$ is $\Sigma_{\mathbf{d}_i}$-invariant. Next, for $i\ne j\in Q_0$, the polynomial $f_\xi$ is divisible by $(z_{i,r}-z_{i',r'})^{m_{i,i'}}$ for $1\le r\le \mathbf{d}_i$, $1\le r'\le \mathbf{d}_{i'}$. Overall, the polynomial $F_\mathbf{d}$ divides $f_\xi$ and the ratio is $\Sigma_\mathbf{d}$-invariant, as required. To show that every polynomial of this form can be obtained as $f_\xi$ for a suitable $\xi\in \mathcal{A}_Q({\bf d})^*$, one just reverses the logic of the proof we just presented. 
\end{proof}

\begin{cor}\label{cor:series-dual}
We have
 \[
P(\mathcal{A}_Q,x,q)=\sum_{\mathbf{d}\in\mathbb{Z}_{\ge 0}^{Q_0}} \frac{(-q^{\frac12})^{\mathbf{d}\cdot\mathbf{d}-\chi(\mathbf{d},\mathbf{d})}}{\prod_{i\in Q_0} (q)_{\mathbf{d}_i}} x^{\bf d}.
 \]
\end{cor}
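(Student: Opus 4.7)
The plan is to extract $P(\mathcal{A}_Q,x,q)$ from the functional realization in Proposition \ref{prop:functional-realisation} by a direct Hilbert-series computation on the graded dual, followed by the formal substitution that reverses graded duality. Concretely, since $\dim(\mathcal{A}_{Q,\mathbf{d}}^\vee)^n=\dim\mathcal{A}_{Q,\mathbf{d}}^{-n}$, the series $P(\mathcal{A}_{Q,\mathbf{d}},q)$ is obtained from $P(\mathcal{A}_{Q,\mathbf{d}}^\vee,q)$ by replacing $q^{1/2}$ with $q^{-1/2}$, so it is enough to compute the Poincar\'e series of $F_\mathbf{d}\Lambda_\mathbf{d}[-\mathbf{m}\cdot\mathbf{d}]$.

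The first step is to compute the cohomological degree of $F_\mathbf{d}$. The pairing $f_\xi=\xi(\prod_i a_i(z_{i,1})\cdots a_i(z_{i,\mathbf{d}_i}))$ forces each formal variable $z_{i,r}$ to carry cohomological degree $+2$, so the cohomological degree of $F_\mathbf{d}$ is twice its polynomial degree. Summing the contributions of the ``diagonal'' factors $(z_{i,r}-z_{i,r'})^{m_{i,i}}$ and the ``off-diagonal'' factors $(z_{i,r}-z_{i',r'})^{m_{i,i'}}$ and rewriting the result as an unrestricted double sum gives
\[
\deg F_\mathbf{d}=\sum_{i,j\in Q_0}m_{i,j}\mathbf{d}_i\mathbf{d}_j-\mathbf{m}\cdot\mathbf{d}=\mathbf{d}\cdot\mathbf{d}-\chi(\mathbf{d},\mathbf{d})-\mathbf{m}\cdot\mathbf{d},
\]
where the second equality is just the definition of the Euler form.

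The second step is the Poincar\'e series of $\Lambda_\mathbf{d}$. Since $\Lambda_\mathbf{d}$ is a tensor product over $i\in Q_0$ of ordinary polynomial algebras on the elementary symmetric polynomials $e_1^{(i)},\dots,e_{\mathbf{d}_i}^{(i)}$ of cohomological degrees $2,4,\dots,2\mathbf{d}_i$, one immediately obtains $P(\Lambda_\mathbf{d},q)=\prod_{i\in Q_0}\prod_{k=1}^{\mathbf{d}_i}(1-q^{-k})^{-1}$. Combining multiplication by $F_\mathbf{d}$ with the cohomological shift $[-\mathbf{m}\cdot\mathbf{d}]$ contributes the prefactor $(-q^{1/2})^{-(\deg F_\mathbf{d}+\mathbf{m}\cdot\mathbf{d})}=(-q^{1/2})^{-(\mathbf{d}\cdot\mathbf{d}-\chi(\mathbf{d},\mathbf{d}))}$, giving an explicit formula for $P(\mathcal{A}_{Q,\mathbf{d}}^\vee,q)$.

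Applying the substitution $q^{1/2}\mapsto q^{-1/2}$ then turns each factor $(1-q^{-k})^{-1}$ into $(1-q^k)^{-1}$, assembling into $(q)_{\mathbf{d}_i}^{-1}$, while the prefactor becomes $(-q^{1/2})^{\mathbf{d}\cdot\mathbf{d}-\chi(\mathbf{d},\mathbf{d})}$ thanks to the identity $(-q^{-1/2})^{-n}=(-q^{1/2})^{n}$; summing against $x^\mathbf{d}$ over $\mathbf{d}\in\mathbb{Z}_{\ge 0}^{Q_0}$ then yields the asserted formula. The argument is essentially a calculation, so the only delicate points are bookkeeping for the signs and the half-integer powers of $q$, together with the passage from the symmetric sum over unordered pairs $\{i,i'\}$ appearing in the definition of $F_\mathbf{d}$ to the ordered double sum naturally expressing $\chi(\mathbf{d},\mathbf{d})$.
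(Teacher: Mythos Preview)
Your proof is correct and follows essentially the same route as the paper's own argument: invoke the functional realisation $\mathcal{A}_{Q,\mathbf{d}}^\vee\cong F_\mathbf{d}\Lambda_\mathbf{d}[-\mathbf{m}\cdot\mathbf{d}]$, compute the Poincar\'e series of $\Lambda_\mathbf{d}$ and the (cohomological) degree of $F_\mathbf{d}$, assemble these into $P(\mathcal{A}_Q^\vee,x,q)$, and then pass to $P(\mathcal{A}_Q,x,q)$ via the substitution $q^{1/2}\mapsto q^{-1/2}$. The only differences are expository (you spell out the elementary symmetric polynomials and the unordered-to-ordered pair bookkeeping a bit more explicitly), not mathematical.
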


\begin{proof}
According to Proposition \ref{prop:functional-realisation}, we have
 \[
\mathcal{A}_{Q,\mathbf{d}}^\vee\cong F_\mathbf{d}\Lambda_{\mathbf{d}}[-\mathbf{m}\cdot\mathbf{d}]. 
 \] 
Clearly, $P(\Lambda_{\mathbf{d}},q)=\frac1{\prod_{i\in Q_0} (q^{-1})_{\mathbf{d}_i}}$. Each variable $z_{i,s}$ is of degree $2$, so the degree of the polynomial $F_\mathbf{d}$ is equal to
 \[
2\sum_{i\in Q_0}m_{i,i}\binom{\mathbf{d}_i}2 +\sum_{i\ne j \in Q_0}m_{i,j}\mathbf{d}_i \mathbf{d}_j=\sum_{i,j\in Q_0}m_{i,j}\mathbf{d}_i \mathbf{d}_j-\mathbf{m}\cdot\mathbf{d},
 \]
and taking the account the extra shift $-\mathbf{m}\cdot\mathbf{d}$, we find that  
 \[
P(\mathcal{A}_Q^\vee,x,q)=\sum_{\mathbf{d}\in\mathbb{Z}_{\ge 0}^{Q_0}} \frac{(-q^{\frac12})^{-\sum_{i,j\in Q_0}m_{i,j}\mathbf{d}_i \mathbf{d}_j} }{\prod_{i\in Q_0} (q^{-1})_{\mathbf{d}_i}}x^{\bf d}=\sum_{\mathbf{d}\in\mathbb{Z}_{\ge 0}^{Q_0}} \frac{(-q^{\frac12})^{\chi(\mathbf{d},\mathbf{d})-\mathbf{d}\cdot\mathbf{d}}}{\prod_{i\in Q_0} (q^{-1})_{\mathbf{d}_i}} x^{\bf d}. 
 \]
Finally, 
 \[
P(\mathcal{A}_Q,x,q)=P(\mathcal{A}_Q^\vee,x,q^{-1})=
\sum_{\mathbf{d}\in\mathbb{Z}_{\ge 0}^{Q_0}} \frac{(-q^{\frac12})^{\mathbf{d}\cdot\mathbf{d}-\chi(\mathbf{d},\mathbf{d})}}{\prod_{i\in Q_0} (q)_{\mathbf{d}_i}} x^{\bf d},
 \]
as required. 
\end{proof}

We are now ready to unravel the true reason why the algebra $\mathcal{A}_Q$ is of interest to us. For that, we note that though the motivic generating series 
 $$
\sum_{{\bf d}\in\mathbb{Z}_{\ge 0}^{Q_0}}\frac{(-q^{\frac12})^{-\chi({\bf d},{\bf d})}x^{\bf d}}{\prod_{i\in Q_0} (q^{-1})_{\mathbf{d}_i}}
 $$
is an element of the ring $\mathbb{Q}((q^{-\frac12}))[[x_i\colon i\in Q_0]]$, and the Poincar\'e series of the algebra $\mathcal{A}_Q$ is an element of the ring $R_Q=\mathbb{Q}((q^{\frac12}))[[x_i\colon i\in Q_0]]$, each of those series can be viewed as an element of the ring $\mathbb{Q}(q^{\frac12})[[x_i\colon i\in Q_0]]$. Note that if we view the two former rings as subspaces of $\mathbb{Q}[[q^{\pm\frac12}]][[x_i\colon i\in Q_0]]$, the embeddings of $\mathbb{Q}(q^{\frac12})[[x_i\colon i\in Q_0]]$ into them gives two different embeddings 
 \[
\mathbb{Q}(q^{\frac12})[[x_i\colon i\in Q_0]]\hookrightarrow\mathbb{Q}[[q^{\pm\frac12}]][[x_i\colon i\in Q_0]] .
 \]

\begin{prop}\label{prop:poincare series}
In the ring $\mathbb{Q}(q^{\frac12})[[x_i\colon i\in Q_0]]$, we have 
 \[
A_Q(x,q)=P(\mathcal{A}_Q,q^{\frac12}x,q).
 \]
\end{prop}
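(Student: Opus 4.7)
The plan is to deduce the proposition as a direct computation from Corollary \ref{cor:series-dual}, treating the identity as a coefficient-by-coefficient matching in $\mathbb{Q}(q^{\frac12})[[x_i\colon i\in Q_0]]$. Substituting $x_i\mapsto q^{\frac12}x_i$ into the formula of Corollary \ref{cor:series-dual} yields
\[
P(\mathcal{A}_Q,q^{\frac12}x,q)=\sum_{\mathbf{d}\in\mathbb{Z}_{\ge 0}^{Q_0}} \frac{(-q^{\frac12})^{\mathbf{d}\cdot\mathbf{d}-\chi(\mathbf{d},\mathbf{d})}\,q^{\frac12|\mathbf{d}|}}{\prod_{i\in Q_0} (q)_{\mathbf{d}_i}}\,x^{\mathbf{d}},
\]
where $|\mathbf{d}|=\sum_i \mathbf{d}_i$. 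It remains to rewrite this so it becomes the defining series $A_Q(x,q)$.

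The first step is the elementary $q$-Pochhammer identity $(q^{-1})_n=(-1)^n q^{-n(n+1)/2}(q)_n$, which follows immediately from factoring $1-q^{-k}=-q^{-k}(1-q^k)$ and telescoping. Applied vertex-wise this gives
\[
\frac{1}{\prod_{i\in Q_0} (q^{-1})_{\mathbf{d}_i}}=\frac{(-1)^{|\mathbf{d}|}\,q^{(\mathbf{d}\cdot\mathbf{d}+|\mathbf{d}|)/2}}{\prod_{i\in Q_0} (q)_{\mathbf{d}_i}},
\]
using that $\sum_i \mathbf{d}_i(\mathbf{d}_i+1)/2=(\mathbf{d}\cdot\mathbf{d}+|\mathbf{d}|)/2$. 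The second step is to check that the prefactor produced by the substitution matches this after extracting the common $(-q^{\frac12})^{-\chi(\mathbf{d},\mathbf{d})}$: one has $(-q^{\frac12})^{\mathbf{d}\cdot\mathbf{d}}\,q^{\frac12|\mathbf{d}|}=(-1)^{\mathbf{d}\cdot\mathbf{d}}\,q^{(\mathbf{d}\cdot\mathbf{d}+|\mathbf{d}|)/2}$, and the crucial parity observation $(-1)^{\mathbf{d}\cdot\mathbf{d}}=(-1)^{|\mathbf{d}|}$ (because $\mathbf{d}_i^2\equiv\mathbf{d}_i\pmod 2$) identifies the two expressions.

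Combining these two steps shows that the $\mathbf{d}$-th coefficient of $P(\mathcal{A}_Q,q^{\frac12}x,q)$ agrees with the $\mathbf{d}$-th coefficient of $A_Q(x,q)$, completing the argument. There is no genuine obstacle here beyond disciplined bookkeeping of signs and half-integer $q$-exponents: the substantive content of the proposition has already been packaged into Proposition \ref{prop:functional-realisation} and Corollary \ref{cor:series-dual}, and the present statement is essentially a dictionary translation between the normalisation conventions used to define $A_Q(x,q)$ and those used to define $P(\mathcal{A}_Q,x,q)$.
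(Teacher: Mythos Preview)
Your proof is correct and follows essentially the same approach as the paper: both arguments invoke Corollary~\ref{cor:series-dual}, apply the identity $(q^{-1})_n=(-1)^n q^{-n(n+1)/2}(q)_n$ vertexwise, and use the parity observation $\mathbf{d}\cdot\mathbf{d}\equiv|\mathbf{d}|\pmod 2$ to match signs. The only cosmetic difference is that the paper manipulates $A_Q(x,q)$ towards $P(\mathcal{A}_Q,q^{\frac12}x,q)$ while you go in the opposite direction.
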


\begin{proof}
According to Corollary \ref{cor:series-dual}, we have 
 \[
P(\mathcal{A}_Q,x,q)=\sum_{\mathbf{d}\in\mathbb{Z}_{\ge 0}^{Q_0}} \frac{(-q^{\frac12})^{\mathbf{d}\cdot\mathbf{d}-\chi(\mathbf{d},\mathbf{d})}}{\prod_{i\in Q_0} (q)_{\mathbf{d}_i}} x^{\bf d}.
 \]
At the same time, we have the following equalities in $\mathbb{Q}(q^{\frac12})[[x_i\colon i\in Q_0]]$:
\begin{multline*}
A_Q(x,q)=\sum_{{\bf d}\in\mathbb{Z}_{\ge 0}^{Q_0}}\frac{(-q^{\frac12})^{-\chi({\bf d},{\bf d})}x^{\bf d}}{\prod_{i\in Q_0} 
(q^{-1})_{\mathbf{d}_i}}=
 \sum_{{\bf d}\in\mathbb{Z}_{\ge 0}^{Q_0}}\frac{(-q^{\frac12})^{-\chi({\bf d},{\bf d})}q^{\sum_{i\in Q_0}\binom{\mathbf{d}_i+1}2}x^{\bf d}}{\prod_{i\in Q_0}\left( (q-1)\cdot\ldots\cdot(q^{\mathbf{d}_i}-1)\right)}=\\
 \sum_{{\bf d}\in\mathbb{Z}_{\ge 0}^{Q_0}}\frac{(-1)^{-\chi({\bf d},{\bf d})+\sum_{i\in Q_0}\mathbf{d}_i}  q^{-\frac12\chi({\bf d},{\bf d})+\sum_{i\in Q_0}\binom{\mathbf{d}_i+1}2}x^{\bf d}}{
\prod_{i\in Q_0} (q)_{\mathbf{d}_i}}.
\end{multline*}

We note that 
 \[
-\chi({\bf d},{\bf d})+\sum_{i\in Q_0}\mathbf{d}_i\equiv -\chi({\bf d},{\bf d})+\sum_{i\in Q_0}\mathbf{d}_i^2 =\mathbf{d}\cdot\mathbf{d}-\chi({\bf d},{\bf d})\pmod{2}
 \]
and that 
 \[
-\frac12\chi({\bf d},{\bf d})+\sum_{i\in Q_0}\binom{\mathbf{d}_i+1}2=
-\frac12\chi({\bf d},{\bf d})+\frac12\mathbf{d}\cdot\mathbf{d}+\frac12\sum_{i\in Q_0}\mathbf{d}_i.
 \]
We see that the only difference between $A_Q(x,q)$ and $P(\mathcal{A}_Q,q^{\frac12}x,q)$ is that for each $\mathbf{d}$, the corresponding term in $A_Q(x,q)$ has the extra factor $q^{\frac12\sum_{i\in Q_0}\mathbf{d}_i}$. Since the multiplication of the monomial $x^\mathbf{d}$ by $q^{|\mathbf{d}|}=q^{\frac12\sum_{i\in Q_0}\mathbf{d}_i}$ can be implemented by the rescaling $x\mapsto q^{\frac12}x$, we conclude that
 \[
A_Q(x,q)=P(\mathcal{A}_Q,q^{\frac12}x,q),
 \]
as required. 
\end{proof}

\section{Koszul dual Lie algebras and their properties}\label{sec:dual}

Proposition \ref{prop:poincare series} explains why it is natural to seek for an interpretation of the refined Donaldson--Thomas invariants in terms of the algebra $\mathcal{A}_Q$. In this section, we make the crucial step towards the main result of the paper, and perform an extensive study of the Koszul dual Lie algebra. Note that all our algebras are defined within the same symmetric monoidal category, and the Koszul sign rule applies whenever one talks about (skew-)symmetry. 

\subsection{The description of the Koszul dual Lie algebra}

\begin{prop}\label{Koszuldual}
The Koszul dual Lie algebra $\mathfrak{g}(\mathcal{A}_Q)$ is isomorphic to the Lie algebra $\fg_Q$ generated by elements $b_{i,k}$ of degree $(\alpha_i,2k+m_{i,i}+1)$, $i\in Q_0$, $k\ge 0$ subject to the relations 
 \[
[b_{i,k},b_{i,l}]=0
 \]
for all $i\in Q_0$ with $m_{i,i}=0$ and all $k,l\ge 0$,
\[
\sum_{p=0}^{m_{i,i}-1} (-1)^p\binom{m_{i,i}-1}{p} [b_{i,k-p},b_{j,l+p}]=0, 
\]
for all $i\in Q_0$ with $m_{i,i}\ge 1$ and all $k\ge m_{i,i}-1$, $l\ge 0$, and
\[
\sum_{p=0}^{m_{i,j}} (-1)^p\binom{m_{i,j}}{p} [b_{i,k-p},b_{j,l+p}]=0 
\]
for all $i\ne j\in Q_0$ and all $k\ge m_{i,j}$, $l\ge 0$.
\end{prop}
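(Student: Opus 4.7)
The plan is to compute directly the orthogonal complement $\bar R^\perp \subset \Lambda^2((sV_Q)^\vee)$, where $\bar R \subset S^2 V_Q$ is the image of the second group of defining relations of $\mathcal{A}_Q$. Since $\mathcal{A}_Q$ is supercommutative, Section~\ref{sec:koszulalg} yields $\mathcal{A}_Q^! \cong U(\fg(\mathcal{A}_Q))$ with $\fg(\mathcal{A}_Q)$ generated by $b_{i,k} := (s a_{i,k})^\vee$ and presented by those quadratic Lie relations whose images in $\Lambda^2((sV_Q)^\vee)$ pair trivially against $\bar R$. The degree formula $\deg(b_{i,k}) = (\alpha_i, 2k+m_{i,i}+1)$ is consistent with $s$ carrying internal degree~$1$, so it remains only to identify the relations.

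First I would fix vertices $i,j \in Q_0$ and a total level $M$, and describe $\bar R$ in bidegree $\alpha_i+\alpha_j$ with $k_1+k_2=M$. For $i\ne j$ the $m_{i,j}$ vectors $r_p=\sum_{k_1+k_2=M}\binom{k_2}{p}a_{i,k_1}a_{j,k_2}$ with $0\le p<m_{i,j}$ are linearly independent (for $M$ large enough) because the polynomials $\binom{x}{p}$ are. For $i=j$ one must impose the supercommutativity identification $a_{i,k_1}\cdot a_{i,k_2}=(-1)^{m_{i,i}}a_{i,k_2}\cdot a_{i,k_1}$ in $S^2 V_Q$: when $m_{i,i}$ is odd this kills $r_0$ automatically and leaves $m_{i,i}-1$ essential relations, whereas when $m_{i,i}$ is even all $m_{i,i}$ relations survive. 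The edge case $m_{i,i}=0$ has no second-group relations at all, which accounts for the separately stated clause $[b_{i,k},b_{i,l}]=0$ of the proposition.

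The core of the argument is a finite-difference identity. After unwinding the Koszul-sign conventions for the pairing $\Lambda^2((sV_Q)^\vee)\otimes S^2 V_Q\to\mathbb{Q}$, the pairing of the expression $\sum_{p=0}^{n}(-1)^p\binom{n}{p}[b_{i,k-p},b_{j,l+p}]$ against $r_q$ reduces to $\sum_{p=0}^{n}(-1)^p\binom{n}{p}\binom{l+p}{q}$, accompanied when $i=j$ by the counterpart $\epsilon\sum_p(-1)^p\binom{n}{p}\binom{k-p}{q}$ with $\epsilon=(-1)^{m_{i,i}}$ coming from swapping the two factors. Since $\binom{l+p}{q}$ is polynomial in $p$ of degree $q$, and the operator $\sum_{p=0}^{n}(-1)^p\binom{n}{p}(\cdot)$ kills every polynomial of degree $<n$, each such summand vanishes for $q<n$. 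In the off-diagonal case the choice $n=m_{i,j}$ is exactly what is required; in the diagonal case with $m_{i,i}\ge 1$, the choice $n=m_{i,i}-1$ is what makes the two nonzero ``top-degree'' contributions at $q=m_{i,i}-1$ cancel, via the parity identity $(-1)^{m_{i,i}-1}+\epsilon=0$. This is precisely why the diagonal coefficient is $\binom{m_{i,i}-1}{p}$ rather than $\binom{m_{i,i}}{p}$.

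Finally, one must verify that the produced relations span all of $\bar R^\perp$, not merely a proper subspace. A direct count of independent relations in each bidegree (accounting for the (anti)symmetry of the bracket imposed by the parity of $b_{i,k}$) matches the codimension of $\bar R$ in $S^2 V_Q$; alternatively, the Poincar\'e series of $U(\fg_Q)$ predicted by this presentation agrees with the one computable from $\mathcal{A}_Q^\vee$ via Proposition~\ref{prop:functional-realisation} and Corollary~\ref{cor:series-dual}. The main technical obstacle I anticipate is the careful Koszul-sign bookkeeping that forces the diagonal binomial coefficient to come out to $\binom{m_{i,i}-1}{p}$; once that is in place, the remainder is routine binomial-coefficient manipulation.
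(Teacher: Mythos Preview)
Your proposal is correct and follows essentially the same approach as the paper: verify that the stated Lie relations pair to zero against the defining relations of $\mathcal{A}_Q$, then count dimensions in each bidegree to confirm they span the full annihilator. You are more explicit than the paper about the finite-difference identity $\sum_{p=0}^{n}(-1)^p\binom{n}{p}\binom{l+p}{q}=0$ for $q<n$ (which the paper compresses into ``a direct calculation shows''), while the paper is more explicit than you about the dimension count, which it does via Proposition~\ref{prop:functional-realisation} by identifying $\mathcal{A}_{Q,\alpha_i+\alpha_j}^\vee$ with polynomials divisible by $(z_1-z_2)^{m_{i,j}}$. One small caveat: your second alternative for the dimension match, comparing full Poincar\'e series of $U(\fg_Q)$ with that of $\mathcal{A}_Q^\vee$, would presuppose numerical Koszulness, which is only proved later in the paper; stick with the direct bidegree count, which is exactly what the paper does.
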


\begin{proof}
A direct calculation shows that, if we interpret $b_{i,k}$ as $a_{i,k}^\vee$, the relations of $\mathfrak{g}_Q$ in which each Lie bracket is written as commutator, annihilate the relations of $\mathcal{A}_Q$ under the usual pairing. It is thus enough to establish that they are linearly independent and span, for each $\mathbb{Z}_{\ge 0}^{Q_0}\times \frac12\mathbb{Z}$-homogeneous component, the space of the same dimension as the space of quadratic elements of $\mathcal{A}_Q$.  

Let us first consider the homogeneous component of degree $(\alpha_i+\alpha_j,d)$ with $i\ne j$. Our relations are indexed by $k\ge m_{i,j}$, $l\ge 0$, and we have $d=k+l$, so there are relations for $d\ge m_{i,j}$, and the number of relations is equal to $d-m_{i,j}+1$. On the other hand, according to Proposition \ref{prop:functional-realisation}, the dual space of $\mathcal{A}_{Q, \alpha_i+\alpha_j}$ is identified with the space of polynomials in two variables $z_1,z_2$ divisible by $(z_1-z_2)^{m_{i,j}}$, so we conclude that the number of relations is equal to the dimension of the corresponding space of quadratic elements. These relations are linearly independent by direct inspection. 

The case of the homogeneous component of degree $(2\alpha_i,d)$ is similar, but the dual space is the space of symmetric polynomials in two variables $z_1,z_2$ divisible by $(z_1-z_2)^{m_{i,i}}$, and there are duplicates among the relations (for each $d\ge m_{i,i}$, the $k$-th relation and the $(m_{i,i}-k)$-th relation are proportional), and these two phenomena ``compensate'' each other, leading to the matching dimensions.
\end{proof}

Let us give a complete description of the Lie superalgebra $\fg_Q$ in one particular case. Let $Q$ be the quiver consisting of two vertices with no loops and one edge in each direction. The Lie superalgebra $\fg_Q$ is generated by odd elements $b_{i,k}$, $i=1,2$,  $k\ge 0$ subject to the relations
\begin{gather*}
[b_{1,k},b_{1,k}]=0,\\
[b_{1,k},b_{2,l}]=[b_{1,k-1},b_{2,l+1}],\\
[b_{2,k},b_{2,l}]=0,\\
\end{gather*}
The second group of relations means that the element $[b_{1,k},b_{2,l}]$ only depends on the sum $k+l$, and we can denote it $c_{k+l}$. Moreover, it follows from the Jacobi identity that $[b_{1,k},c_l]=[b_{2,k},c_l]=[c_k,c_l]=0$, and therefore the Lie algebra $\fg_Q$ has a basis consisting of $b_{1,k}$ (of degree $(\alpha_1,2k+1)$), $b_{2,k}$ (of degree $(\alpha_2,2k+1)$), and $c_k$ (of degree $(\alpha_1+\alpha_2,2k+2)$). Thus, the Poincar\'e 
 series of $\fg_Q$ in this case is equal to
\[
(1-q^{-1})^{-1} (-q^{-1/2}z_1 - q^{-1/2}z_2 + q^{-1} z_1z_2),
\] 
exhibiting the same shape as the plethystic logarithm of the motivic generating function. We shall now see that this is not a coincidence.

\subsection{The case of one-vertex quivers}

In this section, we recall the combinatorial approach of \cite{MR2889742} in the case of one-vertex quivers and highlight its relationship to our work. 

Let $Q$ be the quiver with one vertex and $m\ge 1$ loops. The Lie superalgebra $\fg_Q$ is generated by elements $b_k$, $k\ge 0$, of parity congruent to $m-1$ modulo $2$, subject to the relations 
 \[
\sum_{i=0}^{m-1}(-1)^i\binom{m-1}{i}[b_{k-i},b_{l+i}]=0, \quad k\ge m-1, l\ge 0.
 \]
Lattice vertex operator realisations of these algebras were studied in \cite{Do}, where it was essentially established that for the degree-lexicographic ordering arising from the ordering $b_{i}<b_{j}$ for $i>j$ of generators, the defining relations form a Gr\"obner basis of the universal enveloping algebra $U(\mathfrak{g}_Q)$. It follows that $\mathfrak{g}_Q$ has a basis parametrized by the set $\mathcal{S}$ of super-Lyndon--Shirshov words $b_{p_1}\cdots b_{p_n}$ such that $p_{i+1}\leq p_i+m-1$ for all $i<n$. 

Let us recall the partition combinatorics of \cite[Section 5]{MR2889742}. Let $\Lambda$ be the set of all partitions $\lambda$ with non-negative parts, written $$\lambda=(0\leq\lambda_1\leq\ldots\leq\lambda_n),$$ and set $l(\lambda)=n$. We define the shift operators $S^p$ on $\Lambda$ by $$S^p\lambda=(\lambda_1+p,\ldots,\lambda_n+p).$$

For two partitions $\lambda,\mu\in\Lambda$, we define $\lambda\cup\mu$ as the partition with parts $\lambda_1,\ldots,\lambda_{l(\lambda)},\mu_1,\ldots,\mu_{l(\mu)}$, listed in the non-decreasing order. We shall consider the monoid structure on $\Lambda$ defined by the formula
$$\lambda*\mu=\lambda\cup S^{(m-1)l(\lambda)}\mu.$$
Let us consider the subset $T\subset\Lambda$ consisting of all partitions $\lambda$ such that $\lambda_i\leq(m-1)(i-1)$ for all $i=1,\ldots,l(\lambda)$. It is a submonoid of $(\Lambda,*)$, and when multiplying two partitions in $T$, the monoidal product is given by the concatenation (no resorting of the parts has to be performed). Furthermore, let us consider the subset $T^0\subset T$ consisting of all partitions  $\lambda$ such that $\lambda_i<(m-1)(i-1)$ for all $i=2,\ldots,l(\lambda)$. It is totally ordered by the standard lexicographic ordering; we define the parity of $\lambda\in T^0$ to be congruent to $(m-1)l(\lambda)$ modulo $2$.
Finally, we define $T^{L,+}$ as the set of all super-Lyndon--Shirshov words \emph{in the alphabet $T^0$} (this uses the parity we just defined). 

\begin{prop} 
The set $\mathcal{S}$ is in bijection with $\mathbb{Z}_{\geq 0}\times T^{L,+}$ by associating to $(p,\lambda)\in \mathbb{Z}_{\geq 0}\times T^{L,+}$ the word
$$\left(p+ (m-1)(i-1)-\lambda_i\right)_i.$$
\end{prop}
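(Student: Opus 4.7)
The argument I would give is purely combinatorial; the main content lies in matching the super-Lyndon--Shirshov structure on the two sides, while bookkeeping with the bijection formula is mostly routine. First I would verify that for any pair $(p,\lambda)\in\mathbb{Z}_{\geq 0}\times T$, the sequence $w_i := p + (m-1)(i-1) - \lambda_i$ is non-negative (since $\lambda\in T$ forces $\lambda_i\leq (m-1)(i-1)$, so $w_i \geq p \geq 0$) and satisfies the normality condition $w_{i+1}\leq w_i+m-1$ (since $\lambda$ is non-decreasing). Because every $\lambda\in T^{L,+}$ has $\lambda_1=0$ (as any element of $T^0$ does), we recover $p=w_1$. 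Conversely, given $w=b_{p_1}\cdots b_{p_n}\in\mathcal{S}$, I would set $p:=p_1$ and $\lambda_i:=p_1+(m-1)(i-1)-p_i$; for this $\lambda$ to land in $T$ one needs $p_1=\min_i p_i$, which follows from the super-Lyndon--Shirshov property under the ordering $b_i<b_j\Leftrightarrow i>j$, since the first letter of such a word is forced to be the alphabet-maximal letter occurring in $w$.

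Next I would decompose $\lambda$ canonically as $\lambda=\lambda^{(1)}*\cdots*\lambda^{(k)}$ with each $\lambda^{(j)}\in T^0$, where the boundaries between blocks occur exactly at those positions $i$ for which $\lambda_i=(m-1)(i-1)$. Translated via the formula, these are the positions of $w$ where $p_i=p_1$, i.e., the positions of the globally maximal letter $b_{p_1}$. Verifying each $\lambda^{(j)}\in T^0$ is immediate, since non-boundary positions satisfy the strict inequality defining $T^0$. This yields a corresponding block decomposition $w=w^{(1)}\cdots w^{(k)}$ in which every block begins with $b_{p_1}$ and contains no other occurrence of that letter.

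The crucial step is to show that $w$ is super-Lyndon--Shirshov in the $b$-alphabet if and only if $(\lambda^{(1)},\ldots,\lambda^{(k)})$ is super-Lyndon--Shirshov in the alphabet $T^0$. The key reduction is that any cyclic rotation of $w$ not starting at a block boundary begins with a letter strictly smaller than $b_{p_1}$, hence is automatically smaller than $w$ in our lexicographic order, so only the block-boundary shifts $w^{(s)}\cdots w^{(k)}w^{(1)}\cdots w^{(s-1)}$ need to be compared with $w$. A careful letter-by-letter case analysis of this comparison reduces it to the corresponding lexicographic comparison of $T^0$-letter sequences; the subtle point is that when two blocks being compared have unequal lengths, the next letter in the full word after the shorter block ends is always $b_{p_1}$ (the alphabet maximum), and this pins down the convention for comparing $T^0$-partitions of unequal length. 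The parity assignments match as well: $b_k$ has parity $(m+1)\bmod 2$, so a word of length $n$ has parity $(m-1)n\bmod 2$, equal to the parity of the corresponding $T^{L,+}$-word, so squares of odd Lyndon--Shirshov words on both sides correspond.

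The principal obstacle I anticipate is the case analysis in the previous paragraph: the precise matching between the block-level lex comparison on the $T^0$-side and the character-level lex comparison on the $b$-side when blocks of different lengths are involved. Once that matching is settled, the injectivity and surjectivity of the bijection both follow directly from the mutual inverse formulas, so the proposition is established.
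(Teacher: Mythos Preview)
Your proposal is correct and follows essentially the same route as the paper's proof: extract the minimal index $p_1$, decompose the resulting word into blocks at occurrences of the maximal letter $b_{p_1}$, identify each block with an element of $T^0$, and observe that only cyclic shifts beginning at a block boundary are non-trivially constrained by the super-Lyndon--Shirshov condition. The paper frames this via an explicit monoid isomorphism $\imath\colon T\to C$ onto words starting with $b_0$ (subtracting $p_1$ first), whereas you work directly at the level of $b_{p_1}$-blocks, but the content is identical. You are in fact slightly more careful than the paper in two respects: you flag the issue of comparing $T^0$-letters of unequal length and explain why the convention is forced, and you check that parities match; the paper's proof leaves both implicit.
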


\begin{proof}
Let us first consider the set of all monomials in variables $b_i$. This set has a subset $C$ of monomials $b_{p_1}\cdots b_{p_n}$ such that the following two conditions hold:
\begin{enumerate}
\item $p_1=0$,
\item $p_{i+1}\leq p_i+m-1$ for all $i<n$. 
\end{enumerate}

It is easy to see that the map $\imath\colon T\to C$ defined by associating to a partition $\lambda\in T$ the word $b_{p_1}\cdots b_{p_n}$ with $p_i=(m-1)(i-1)-\lambda_i$ is bijective, and in fact defines an isomorphism of monoids, that is, $\imath(\lambda*\mu)$ is the concatenation of $\imath(\lambda)$ and $\imath(\mu)$, for $\lambda,\mu\in T$. Under this bijection, the subset $T^0$ maps to the set $C^0$ of all words $a$ in $C$ such that $a_i>0$ for all $i>1$. Now let $v=b_{p_1}\cdots b_{p_n}$ be any word in $\mathcal{S}$. Since $v$ is super-Lyndon--Shirshov, $p_1$ is minimal among all $p_i$, so we can subtract $p_1$ from all $p_i$ to obtain a word $w\in \mathcal{S}$ starting from $b_0$. Such a word admits a canonical factorization $w=w^1\cdots w^k$ with all $w^i\in C_0$: the first letters of the words $w^i$ correspond precisely to occurrences of $b_0$ in $w$. Thus $w$ can be viewed as a word in the alphabet $C_0$. 

An observation that is crucial for our purposes is that since $w$ is a super-Lyndon--Shirshov word in the alphabet $\{b_i\}$, it is also a super Lyndon--Shirshov word in the alphabet $C_0$. Conversely, a super Lyndo--Shirshov word in the alphabet $C_0$ is also super-Lyndon--Shirshov as a word in the alphabet $\{b_i\}$ (this holds true since the only cyclic rotations of $w$ for which the Lyndon--Shirshov property is non-trivial are those which also start with $0$). Since the map $\imath\colon T\to C$ is a monoid isomorphism inducing a bijection between $T^0$ and $C^0$, we thus find a unique partition $\lambda\in T^{L,+}$ such that $\imath(\lambda)=w$. Adding $p=p_1$ to all its parts recovers the word $v$. The above observation also shows that, conversely, any word in $\imath(T^{L,+})$ is Lyndon. 
\end{proof}

In \cite{MR2889742}, the above argument was paired with a number-theoretic argument involving cyclotomic polynomials to establish integrality of refined DT invariants. We shall see below that there is a way to adapt the argument relying on the combinatorics of Lyndon--Shirshov words for the general case. Under the assumption of the quiver having at least one loop at every vertex, one can also generalize the partition combinatorics of \cite{MR2889742}, leading to another strategy for studying the refined DT invariants, however, the relation of this combinatorics to super-Lyndon--Shirshov words that form a basis of $\mathfrak{g}_Q$ is mysterious. We therefore postpone a description of this combinatorics to future work.

\section{DT invariants and the Lie algebra \texorpdfstring{$\mathfrak{g}_Q$}{gQ}}\label{sec:bar-and-dt}
In this section, we shall relate the Poincar\'e series of the Lie algebra $\mathfrak{g}_Q$ to the refined Donaldson--Thomas invariants of $Q$, and use that property to re-prove the positivity of those invariants. 

\subsection{The Weyl algebra action and the Poincar\'e series}

We shall start with establishing that the dimensions of the graded components of the Lie algebra $\mathfrak{g}_Q$ satisfy the inequalities expected from the refined Donaldson--Thomas invariants. For that, it would be convenient to temporarily switch from Poincar\'e series to characters; for a $\mathbb{Z}_{\ge 0}$-graded vector space, we shall denote by $\mathrm{ch}$ the generating function similar to the Poincar\'e series that ignores signs arising from the parity:
 \[
\mathrm{ch}(V,q)=\sum_{k\in\mathbb{Z}}\dim (V^k)(-q^{\frac12})^{-k}.
 \]
Our first key ingredient is the action of the first Weyl algebra $\mathrm{Diff}_1$ (generated by elements $t$ and $\partial_t$ subject to the standard relation $\partial_t t - t\partial_t=1$) on the Lie algebra $\mathfrak{g}_Q$ defined as follows. We first consider two derivations $p$ and $q$ of the free Lie algebra with generators $b_{i,k}$ which act on generators according to the formulas
 \[
p(b_{i,k})=
\begin{cases}
b_{i,k-1}, k>0,\\
\quad 0,\quad \text{otherwise},
\end{cases}
\quad
q(b_{i,k})=(k+1)b_{i,k+1}.
 \]
We note that these derivations preserve the relations of the Lie algebra $\mathfrak{g}_Q$. Indeed, the action of these derivations on the formal generating series $b_i(z):=\sum b_{i,k}z^k$ can be succinctly written as 
 \[
p(b_i(z))=zb_i(z), \quad q(b_i(z))=\partial_z b_i(z).
 \]
Rewriting the defining relations of $\mathfrak{g}_Q$ in terms of the above generating series, we note that all of them are of the form $(z-w)^{a_{ij}}[b_i(z),b_j(w)]$ with certain $a_{ij}\ge 0$, and the fact that they are preserved by our derivations is obvious, once one notes that $(\partial_z+\partial_w)(z-w)=0$. 

Since on the space of generators we have $[q,p]=\mathrm{id}$, and the commutator of two derivations is a derivation, on each component $(\mathfrak{g}_Q)_{\mathbf{d}}$ the commutator $[q,p]$ acts by multiplication by $|\mathbf{d}|$. We have $(\mathfrak{g}_Q)_{\mathbf{d}}=0$ for $\mathbf{d}=0$, and so we may divide $p$ by $|\mathbf{d}|$ and obtain the action of the first Weyl algebra $\mathrm{Diff}_1$ on each given component $(\mathfrak{g}_Q)_{\mathbf{d}}$.

This Weyl algebra action can be used to prove the following result.

\begin{prop}\label{prop:Weyl}
For each $\mathbf{d}\in\mathbb{Z}_{\ge 0}^{Q_0}$, we have 
 \[
(1-q)\mathrm{ch}((\mathfrak{g}_Q^\vee)_{\mathbf{d}},q)\in\mathbb{Z}_{\ge 0}[[q^{\frac12}]].
 \]
\end{prop}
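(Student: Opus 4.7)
The plan is to use the $\mathrm{Diff}_1$-module structure on $(\fg_Q)_\mathbf{d}$ just constructed: the key reduction is to prove that this module is free over the subalgebra $\mathbb{Q}[t]\subset\mathrm{Diff}_1$, with space of generators $K:=\mathrm{Ker}(\partial_t)\cap(\fg_Q)_\mathbf{d}$. Once freeness is established, the stated inequality follows from a routine character manipulation.

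Two preliminary observations are needed. First, the internal grading of $(\fg_Q)_\mathbf{d}$ is bounded below: each generator $b_{i,k}$ has internal degree $2k+m_{i,i}+1\ge 1$, so any Lie word in these generators landing in $(\fg_Q)_\mathbf{d}$ has internal degree at least $\sum_i\mathbf{d}_i(m_{i,i}+1)$, which is strictly positive for $\mathbf{d}\ne 0$. Second, every bigraded component $(\fg_Q)_{\mathbf{d},n}$ is finite-dimensional; this follows by PBW from the Poincar\'e series computation in Corollary~\ref{cor:series-dual}. Together with the commutation relation $[\partial_t,t]=1$ on $(\fg_Q)_\mathbf{d}$ (the reason $p$ was rescaled by $|\mathbf{d}|$), these observations put us in the setup for a standard linear-algebra argument.

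For freeness, I would show that the natural map $\phi\colon\mathbb{Q}[t]\otimes K\to(\fg_Q)_\mathbf{d}$ is an isomorphism. Surjectivity is proved by induction on the internal degree: elements in the minimal nonzero degree lie in $K$ automatically, since $\partial_t$ would send them into the zero space; for $m$ of higher degree, the inductive hypothesis applied to $\partial_t m$ yields $\partial_t m=\sum_i t^i k_i$ with $k_i\in K$, and then subtracting $\tilde m:=\sum_i (i+1)^{-1}t^{i+1}k_i$ from $m$ produces an element killed by $\partial_t$, hence in $K$, so $m\in\mathrm{im}(\phi)$. Injectivity follows from iterated application of $\partial_t$: in a nontrivial relation $\sum_n t^n k_n=0$ with $k_n\in K$, if $N$ is the largest index with $k_N\ne 0$, then applying $\partial_t^N$ and using $\partial_t(t^n k)=nt^{n-1}k$ for $k\in K$ yields $N!\,k_N=0$, a contradiction.

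With the freeness isomorphism $(\fg_Q)_\mathbf{d}\cong\mathbb{Q}[t]\otimes K$ in hand, and $t$ of internal degree $2$, the computation
 \[
\ch((\fg_Q)_\mathbf{d},q)=\frac{\ch(K,q)}{1-q^{-1}}
 \]
is immediate, and dualizing turns this into $(1-q)\ch((\fg_Q^\vee)_\mathbf{d},q)=\ch(K^\vee,q)$. Since $K$ is concentrated in positive internal degrees, $K^\vee$ sits in non-positive degrees, and its character is manifestly a formal power series in $q^{1/2}$ with non-negative integer coefficients. The only real obstacle is the freeness claim itself; once the bounded-below grading and finite-dimensionality of components are noted, even that reduces to bookkeeping with $[\partial_t,t]=1$.
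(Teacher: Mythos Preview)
Your proposal is correct and follows essentially the same route as the paper: establish that $(\fg_Q)_\mathbf{d}$ is a free $\mathbb{Q}[t]$-module on $\mathrm{Ker}(\partial_t)$, then read off the character identity. The only difference is tactical: for freeness, the paper diagonalises the degree-preserving operator $t\partial_t$ on each graded piece (using $t^m\partial_t^m=t\partial_t(t\partial_t-1)\cdots(t\partial_t-m+1)$ and injectivity of $t$), whereas you argue by explicit antidifferentiation $\tilde m=\sum_i(i+1)^{-1}t^{i+1}k_i$ and by applying $\partial_t^N$ for injectivity---both are standard and yield the same conclusion.
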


Note that since the homological degree of $b_{i,k}$ is equal to $2k+m_{i,i}+1$, the endomorphism $t\in \mathrm{Diff}_1$ is of degree $2$, and the endomorphism $\partial_t\in \mathrm{Diff}_1$ is of degree $-2$, so $(\mathfrak{g}_Q)_{\mathbf{d}}$ is a graded module over the Weyl algebra (and is clearly concentrated in positive degrees). Let us prove a simple but useful result about graded modules over the Weyl algebra. 

\begin{lem}\label{lm:Weyl}
Each $\mathbb{Z}$-graded $\mathrm{Diff}_1$-module $M$ such that the endomorphism $t\in \mathrm{Diff}_1$ is of degree $2$, and the endomorphism $\partial_t\in \mathrm{Diff}_1$ is of degree $-2$ and such that $M_k=0$ for $k\ll 0$ is isomorphic to the free $\mathbb{Q}[t]$-module generated by $\mathrm{Ker}(\partial_t)$.
\end{lem}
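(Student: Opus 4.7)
The plan is to establish the lemma by exhibiting an explicit isomorphism from $\mathbb{Q}[t]\otimes_{\mathbb{Q}}\mathrm{Ker}(\partial_t)$ (with $t$ acting on the first tensor factor and inheriting the grading) to $M$, sending $t^k\otimes v\mapsto t^k v$. This is manifestly $\mathbb{Q}[t]$-linear and preserves the grading, so everything reduces to checking that it is both injective and surjective.

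The first preparatory step is to observe that $\partial_t$ acts locally nilpotently on $M$: if $m\in M_d$ is homogeneous, then $\partial_t^k m\in M_{d-2k}$, which vanishes for $k$ large enough because $M_k=0$ for $k\ll 0$. This property will drive the surjectivity step. It also ensures that $\mathrm{Ker}(\partial_t)$ is a nonzero graded subspace that detects $M$ in a useful sense.

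For injectivity, I would take a hypothetical relation $\sum_{k=0}^n t^k v_k=0$ with $v_k\in\mathrm{Ker}(\partial_t)$ and $v_n\ne 0$, and apply $\partial_t^n$. Using the commutation identity $\partial_t^n t^k=\sum_{j=0}^n\binom{n}{j}\frac{k!}{(k-j)!}t^{k-j}\partial_t^{n-j}$ (with the convention that terms with $k<j$ vanish) together with the assumption $\partial_t v_k=0$, the terms with $k<n$ disappear and the term $k=n$ contributes $n!\,v_n$, yielding $n!\,v_n=0$, which forces $v_n=0$ in characteristic zero, a contradiction. For surjectivity, I would proceed by induction on the order of nilpotency of $\partial_t$ on a given homogeneous element $m$, that is, the smallest $r\ge 0$ with $\partial_t^{r+1}m=0$. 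If $r=0$ then $m\in\mathrm{Ker}(\partial_t)$ and there is nothing to show. Otherwise $\partial_t m$ has strictly smaller nilpotency order, so by induction $\partial_t m=\sum_{j\ge 0}t^j v_j$ for some $v_j\in\mathrm{Ker}(\partial_t)$; setting $m':=\sum_{j\ge 0}\frac{1}{j+1}t^{j+1}v_j$, an element visibly in the image of $\phi$, and computing $\partial_t m'$ using $\partial_t(t^{j+1}v_j)=(j+1)t^j v_j$ yields $\partial_t(m-m')=0$, so $m-m'\in\mathrm{Ker}(\partial_t)$, and consequently $m$ itself lies in the image of $\phi$.

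I expect no serious obstacle: the only subtle points are the bookkeeping in the commutation formula for $\partial_t^n$ against $t^k$ (standard and verified by a short induction on $n$) and the use of characteristic zero to divide by $n!$ and by $j+1$. The grading is respected throughout because $t$ and $\partial_t$ shift degree by $\pm 2$ and the constructions above are homogeneous.
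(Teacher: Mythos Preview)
Your proof is correct and takes a somewhat different---and cleaner---route than the paper's.  The paper first proves that $t$ acts injectively on $M$ (observing that if $tv=0$ then $\mathbb{Q}[\partial_t]v$ is a finite-dimensional $\mathrm{Diff}_1$-submodule, hence zero), and then uses the factorization $t^m\partial_t^m=\prod_{j=0}^{m-1}(t\partial_t-j)$ to see that $t\partial_t$ is diagonalizable with non-negative integer eigenvalues on each $M_n$, giving $M_n=\mathrm{Ker}(\partial_t)_n\oplus\mathrm{Im}(t\partial_t)_n$ and thus surjectivity by induction on the degree; injectivity is then deduced from the invertibility of $\partial_t t=t\partial_t+1$.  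You instead exploit the local nilpotency of $\partial_t$ directly: injectivity is obtained in one line by applying $\partial_t^n$ to the top of a hypothetical relation, and surjectivity follows by the ``integrate $\partial_t m$'' induction on the nilpotency order.  Your argument bypasses both the injectivity of $t$ and the spectral analysis of $t\partial_t$, so it is more elementary; the paper's approach, on the other hand, yields as a by-product the eigenspace decomposition of each $M_n$ under $t\partial_t$, which your argument does not immediately give.
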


\begin{proof}
Let us first show that the endomorphism $t$ is injective on $M$. Suppose that for some $v$ we have $t(v)=0$. Note that the commutation relation $[\partial_t,t]=1$ implies that $\mathbb{Q}[\partial_t](v)$ is a $\mathrm{Diff}_1$ submodule of $M$. By the boundedness of $M$ from the below, this submodule is finite-dimensional. However, the algebra $\mathrm{Diff}_1$ has no non-zero finite-dimensional modules, so $v=0$. 

Next, we shall show, by induction on $n$, that in degrees at most $n$, $M$ is generated by $\mathrm{Ker}(\partial_t)$. This is true when $n$ is the smallest integer such that $M_n\ne 0$ (all elements of that degree are automatically in the kernel of $t$). To prove the step of induction, let us note that in $\mathrm{Diff}_1$, we have, for each $m\ge 0$,
 \[
t^m\partial_t^m =t\partial_t(t\partial_t-1)\cdots (t\partial_t-m+1). 
 \]
(This well known formula is easily proved by induction on $m$.) Note that $t\partial_t$ is a degree-preserving endomorphism of $M$, so it defines an endomorphism of $M_n$. By boundedness of $M$ from the above, there exists $m>0$ such that $\partial^m$ is zero on $M_n$, which shows that as an endomorphism of $M_n$, $\partial_tt$ is annihilated by the polynomial $X(X-1)\cdots (X-m+1)$. This immediately implies that $M_n=\mathrm{Ker}(t\partial_t)\oplus\mathrm{Im}(t\partial_t)$. Moreover, $\mathrm{Im}(t\partial_t)\subset \mathrm{Im}(\partial_t)$, and $\mathrm{Ker}(t\partial_t)=\mathrm{Ker}(\partial_t)$ because of injectivity of $t$. Thus, modulo the kernel of $t$, each element of $M_n$ is obtained by action of $t$ on elements of $M_{n-1}$, ensuring that the induction step can proceed. 

To conclude, we need to show that the obvious map $\mathbb{Q}[t]\otimes\mathrm{Ker}(\partial_t)\to M$ has no kernel. If we assume the contrary and consider an element of that kernel, we can separate it as a sum of an element of $\mathrm{Ker}(\partial_t)$ and an element of $\mathrm{Im}(t)$. Applying $\partial_t$, we find a non-trivial element in the kernel of $\partial_t t=t\partial_t+1$, which is a contradiction with the above.
\end{proof}

\noindent \textit{Proof of Proposition \ref{prop:Weyl}}. 

We have just established that the Weyl algebra action on $(\mathfrak{g}_Q)_{\mathbf{d}}$ is free with the generators $\mathrm{Ker}(\partial_t)_{\mathbf{d}}$. This implies that for each $\mathbf{d}$
 \[
\sum_{n\ge 0} \dim((\mathfrak{g}_Q)_{\mathbf{d},n})q^{-\frac12n}=\frac{1}{1-q^{-1}}\sum_{n\ge 0} \dim(\mathrm{Ker}(\partial_t)_{\mathbf{d},n})q^{-\frac12n}.
 \]
Since  $(\mathfrak{g}_Q^\vee)_{\mathbf{d},n}\simeq (\mathfrak{g}_Q)^\vee_{\mathbf{d},-n}$, we see that
 \[
(1-q)\mathrm{ch}((\mathfrak{g}_Q^\vee)_{\mathbf{d}},q)\in\mathbb{Z}_{\ge 0}[[q^{\frac12}]],
 \]
as required. \qed

To obtain further information about characters, we shall use Gr\"obner--Shirshov bases for Lie algebras. We shall use the ordering of generators ``dual'' to the one considered in the case of the algebra $\mathcal{A}_Q$. Namely, we first order the set of generators by choosing some ordering of $Q_0$, and then letting $b_{i,k}<b_{j,l}$ if $k>l$ or if $k=l$ and $i>j$; this ordering leads to the corresponding graded lexicographic ordering of monomials. We note that this order is not a well-order, but our relations are homogeneous with respect to the homological degree, and in each degree the number of monomials is finite, so all results of the theory of Gr\"obner--Shirshov bases are in fact available. Let us denote by $G_Q$ the reduced Gr\"obner basis of relations of $U(\mathfrak{g}_Q)$ for this ordering. 

It is immediate from the defining relations that the endomorphism $\tau$ of the free associative algebra with generators $b_{i,k}$ defined on generators by the formula $\tau(b_{i,k})=b_{i,k+1}$ descends to a well-defined endomorphism of $U(\mathfrak{g}_Q)$. Let us record an obvious but important feature of our ordering that would allow us to generalise the combinatorial argument of \cite{MR2889742} to arbitrary quivers.

\begin{prop}
Suppose that $m_1$ and $m_2$ are two monomials in the free associative algebra with generators $b_{i,k}$. Then $m_1<m_2$ if and only if $\tau(m_1)<\tau(m_2)$. Moreover, a monomial $m$ is a super-Lyndon--Shirshov word if and only if the monomial $\tau(m)$ is a super-Lyndon--Shirshov word.   
\end{prop}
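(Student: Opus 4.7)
The statement has two parts, both of which I expect to follow from unwinding definitions rather than any deep argument.

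For the first part, the plan is to verify the claim first on single generators, and then propagate it to arbitrary monomials. On generators: by definition $b_{i,k}<b_{j,l}$ iff ($k>l$) or ($k=l$ and $i>j$), and replacing $k,l$ by $k+1,l+1$ clearly preserves both conditions, so $b_{i,k}<b_{j,l}\iff b_{i,k+1}<b_{j,l+1}$. Since $\tau$ preserves the length of a monomial, and the graded lexicographic order compares first by length and then letterwise, the monomial-level statement $m_1<m_2\iff\tau(m_1)<\tau(m_2)$ follows at once from the letterwise fact: the unique position where the two monomials first differ is the same before and after applying $\tau$, and the comparison of the two letters at that position is unchanged by $\tau$.

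For the second part, the plan is to exploit that $\tau$ is a length-preserving, order-preserving bijection from monomials of length $n$ to monomials of length $n$ whose letters have second index $\ge 1$. In particular, $\tau$ commutes with the operation of cyclic rotation in the obvious sense: if $m=x_1\cdots x_n$ then the cyclic rotations of $\tau(m)$ are exactly the $\tau$-images of the cyclic rotations of $m$. Combined with strict order-preservation from part one, this gives that $m$ is strictly greater than all of its cyclic shifts iff $\tau(m)$ is strictly greater than all of its cyclic shifts; that is, $m$ is a Lyndon--Shirshov word iff $\tau(m)$ is. To upgrade from Lyndon--Shirshov to super-Lyndon--Shirshov, I would note that the parity of a generator $b_{i,k}$ depends only on the vertex $i$ (via $m_{i,i}+1$), and $\tau$ fixes the first index, so parities of letters, and hence of monomials, are preserved. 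Moreover, $\tau$ obviously preserves the property of being a square $uu$ of a shorter word. Consequently $m$ is a square of an odd Lyndon--Shirshov word iff $\tau(m)$ is, completing the characterisation.

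I do not anticipate any real obstacle: the statement is essentially the assertion that shifting the second subscript by $1$ is an automorphism of the ordered alphabet with its parity function, so it induces an automorphism of the combinatorial structure (graded lex order, cyclic rotations, parity, squares) in terms of which super-Lyndon--Shirshov words are defined. The only thing that needs a moment of care is to notice that the ordering was defined so that ties in the second index $k$ are broken by the first index $i$, and $\tau$ fixes the first index, so the tie-breaking behaviour is preserved; this is what makes part one work on the nose.
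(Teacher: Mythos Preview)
Your proposal is correct and essentially matches the paper's treatment: the paper states this proposition without proof, calling it ``an obvious but important feature of our ordering,'' and your argument is exactly the routine unwinding of definitions that justifies this. The only minor point worth noting is that $\tau$ is not a bijection on all monomials but merely an injection (its image consists of monomials whose letters all have second index $\ge 1$); however, this causes no trouble, since if $\tau(m)=vv$ then the letters of $v$ already lie in the image of $\tau$, so $v=\tau(u)$ for some $u$ and injectivity gives $m=uu$.
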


\begin{prop}\label{prop:Lyndon}
For each $\mathbf{d}\in\mathbb{Z}_{\ge 0}^{Q_0}$, we have 
 \[
(1-q^{|\mathbf{d}|})\mathrm{ch}((\mathfrak{g}_Q^\vee)_\mathbf{d},q)\in \mathbb{Z}_{\ge 0}[q^{\frac12}].
 \]
\end{prop}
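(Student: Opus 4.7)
The plan is to exploit the compatibility of the endomorphism $\tau$ with the combinatorics of super-Lyndon--Shirshov monomials furnishing a basis of $\mathfrak{g}_Q$. By the general theory of Gr\"obner--Shirshov bases recalled earlier, $\mathfrak{g}_Q$ has a basis $B_\mathbf{d}$ in weight $\mathbf{d}$ consisting of super-Lyndon--Shirshov monomials that are normal with respect to $G_Q$. The preceding proposition tells us that $\tau$ preserves both the monomial ordering and the super-Lyndon--Shirshov property, while a direct inspection of Proposition~\ref{Koszuldual} shows that each defining relation indexed by $(k,l)$ is sent by $\tau$ to the valid defining relation indexed by $(k+1,l+1)$; in particular $\tau$ preserves the defining ideal. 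Combining these three properties (and using the combinatorial characterization of the reduced Gr\"obner--Shirshov basis in terms of data that is $\tau$-equivariant in the appropriate sense), one verifies that $\tau$ sends normal monomials to normal monomials. Consequently, $\tau$ realizes a weight-preserving injection $B_\mathbf{d}\hookrightarrow B_\mathbf{d}$ whose image consists precisely of the basis monomials in which every letter has second index at least~$1$.

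Let $W_\mathbf{d}^{(0)}$ denote the $\mathbb{Q}$-span of the complementary set $B_\mathbf{d}\setminus\tau(B_\mathbf{d})$, i.e.\ of basis monomials containing at least one letter of the form $b_{i,0}$. Since $\tau$ raises the second index of each of the $|\mathbf{d}|$ letters of a weight-$\mathbf{d}$ monomial by one, it shifts homological degrees by precisely $+2|\mathbf{d}|$. The direct sum decomposition $(\mathfrak{g}_Q)_\mathbf{d}=\tau\bigl((\mathfrak{g}_Q)_\mathbf{d}\bigr)\oplus W_\mathbf{d}^{(0)}$ of graded vector spaces yields
\[
\mathrm{ch}((\mathfrak{g}_Q)_\mathbf{d},q)=q^{-|\mathbf{d}|}\mathrm{ch}((\mathfrak{g}_Q)_\mathbf{d},q)+\mathrm{ch}(W_\mathbf{d}^{(0)},q),
\]
which, upon taking graded duals (equivalently, substituting $q\mapsto q^{-1}$), becomes
\[
(1-q^{|\mathbf{d}|})\mathrm{ch}((\mathfrak{g}_Q^\vee)_\mathbf{d},q)=\mathrm{ch}((W_\mathbf{d}^{(0)})^\vee,q).
\]
The right-hand side has non-negative integer coefficients because $W_\mathbf{d}^{(0)}$ is the span of a set of basis vectors; the statement thus reduces to the finite-dimensionality of $W_\mathbf{d}^{(0)}$.

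This finiteness is the principal technical obstacle. A super-Lyndon--Shirshov word is weakly greater than each of its cyclic rotations, so its first letter is maximal; as letters with smaller second index are larger in our ordering, every word indexing a basis vector of $W_\mathbf{d}^{(0)}$ must begin with some $b_{i_1,0}$. To bound the remaining second indices $k_2,\ldots,k_n$ (with $n=|\mathbf{d}|$), I plan to carry out an explicit case-by-case description of the leading bigrams in~$G_Q$ in the chosen ordering. Already the defining relations of Proposition~\ref{Koszuldual}, together with super-commutativity for pairs of vertices with $m_{i,j}=0$, exhibit every bigram $b_{i,s}b_{j,t}$ with $t-s$ exceeding an explicit threshold $C_{ij}$ depending only on $m_{i,j}$ and the vertex ordering as a leading monomial. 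Normality of a word in $W_\mathbf{d}^{(0)}$ therefore forces $k_{r+1}-k_r\le C:=\max_{i,j}C_{ij}$ for every consecutive bigram, and telescoping from $k_1=0$ yields $k_r\le (r-1)C\le(|\mathbf{d}|-1)C$ for all $r$. Consequently all second indices in such words are bounded in terms of $Q$ and $|\mathbf{d}|$ alone, so $W_\mathbf{d}^{(0)}$ is concentrated in finitely many homological degrees and is finite-dimensional, completing the proof.
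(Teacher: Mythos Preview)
Your proposal is correct and follows essentially the same route as the paper: both arguments use the $\tau$-equivariance of the super-Lyndon--Shirshov basis of $\mathfrak{g}_Q$ to peel off a factor of $(1-q^{|\mathbf{d}|})^{-1}$, reducing the claim to the finiteness of the set of normal words whose first letter has second index zero, which is then established from the explicit quadratic leading terms of the defining relations. The paper packages this as a bijection $N_Q\cong\mathbb{Z}_{\ge 0}\times N_Q^0$ rather than via your decomposition $(\mathfrak{g}_Q)_\mathbf{d}=\tau((\mathfrak{g}_Q)_\mathbf{d})\oplus W_\mathbf{d}^{(0)}$, but the content is identical.
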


\begin{proof}
Let us denote by $N_Q$ the set of all super-Lyndon--Shirshov words which are normal with respect to $G_Q$, and by $N^0_Q$ the set of all super-Lyndon--Shirshov words which are normal with respect to $G_Q$ and the first letter is $b_{i,0}$ for some $i$. Note that for each such word $w=b_{i_1,k_1}\cdots b_{i_s,k_s}$ we have $k_1=\min(k_p)$, since otherwise there would be a cyclic shift which is larger than $w$. Thus, we have a bijection $w\mapsto (k_1,\tau^{-k_1}(w))$ between $N_Q$ and  $\mathbb{Z}_{\ge 0}\times N^0_Q$. Let us also note that the relations of $\mathfrak{g}_Q$ are all of the form $(z-w)^{a_{ij}}[b_i(z),b_j(w)]$ with certain $a_{ij}\ge 0$, meaning that the leading terms of $G_Q$ include the monomials $b_{i,k}b_{j,l}$ for all $i,j\in Q_0$ and all $l\ge k+a_{i,j}$, therefore for each $\mathbf{d}$ the set $(N_Q^0)_{\mathbf{d}}$ is finite. According to the theory of Gr\"obner--Shirshov bases for Lie algebras, the cosets of Lyndon--Shirshov monomials corresponding to elements of $N_Q$ form a basis of $\mathfrak{g}_Q$. Let us denote by $\mathfrak{g}_Q^0$ the span of the cosets of Lyndon--Shirshov monomials corresponding to elements of $N_Q^0$.  Since subtracting $k_1$ from each letter subtracts $2|\mathbf{d}|k_1$ from the homological degree, we see that 
 \[
\mathrm{ch}((\mathfrak{g}_Q^\vee)_\mathbf{d},q)=\frac{1}{1-q^{|\mathbf{d}|}}\mathrm{ch}(((\mathfrak{g}_Q^0)^\vee)_\mathbf{d},q),
 \] 
and $\mathrm{ch}(((\mathfrak{g}_Q^0)^\vee)_\mathbf{d},q)\in \mathbb{Z}_{\ge 0}[q^{\frac12}]$ since $(N_Q^0)_{\mathbf{d}}$ is finite.   
\end{proof}

We are now ready to establish an important property of the characters of our Lie algebras.

\begin{prop}\label{prop:Liepositivity}
For each $\mathbf{d}\in\mathbb{Z}_{\ge 0}^{Q_0}$, we have 
 \[
(1-q)\mathrm{ch}((\mathfrak{g}_Q^\vee)_{\mathbf{d}},q)\in\mathbb{Z}_{\ge 0}[q^{\frac12}].
 \]
More specifically, using the action of $\mathrm{Diff}_1$ on $\mathfrak{g}_Q$, we have
 \[
(1-q)\mathrm{ch}((\mathfrak{g}_Q^\vee)_{\mathbf{d}},q)=\sum_{n\ge 0} \dim(\mathrm{Ker}(\partial_t)_{\mathbf{d},n})q^{\frac12n}. 
 \]
\end{prop}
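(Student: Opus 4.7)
The plan is to extract the desired polynomial statement from the two immediately preceding propositions by an elementary positivity argument, and to read the explicit formula off Lemma~\ref{lm:Weyl}. Proposition~\ref{prop:Weyl} supplies the weaker fact that $(1-q)\mathrm{ch}((\mathfrak{g}_Q^\vee)_{\mathbf{d}},q)\in\mathbb{Z}_{\ge 0}[[q^{\frac12}]]$ (non-negative coefficients, no control on support), while Proposition~\ref{prop:Lyndon} supplies polynomiality of $(1-q^{|\mathbf{d}|})\mathrm{ch}((\mathfrak{g}_Q^\vee)_{\mathbf{d}},q)$ (bounded support, but saddled with the multiplier $(1-q^{|\mathbf{d}|})$ rather than the cleaner $(1-q)$). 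Each of these will be used to compensate for the deficiency of the other.

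To that end, I would start by exploiting the factorization
\[
1-q^{|\mathbf{d}|}=(1+q+q^{2}+\cdots+q^{|\mathbf{d}|-1})(1-q),
\]
which identifies the polynomial of Proposition~\ref{prop:Lyndon} with the product of the multiplier $1+q+\cdots+q^{|\mathbf{d}|-1}$ and the series $(1-q)\mathrm{ch}((\mathfrak{g}_Q^\vee)_{\mathbf{d}},q)$ of Proposition~\ref{prop:Weyl}. Since the multiplier has non-negative coefficients and constant term $1$, for every $n$ the coefficient of $q^{\frac12 n}$ in the product is at least as large as the coefficient of $q^{\frac12 n}$ in $(1-q)\mathrm{ch}((\mathfrak{g}_Q^\vee)_{\mathbf{d}},q)$. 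No positive contribution of the series can therefore be cancelled in the product, and because the product has finite support, the series $(1-q)\mathrm{ch}((\mathfrak{g}_Q^\vee)_{\mathbf{d}},q)$ must itself have finite support, giving the polynomial statement $(1-q)\mathrm{ch}((\mathfrak{g}_Q^\vee)_{\mathbf{d}},q)\in\mathbb{Z}_{\ge 0}[q^{\frac12}]$.

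For the explicit identity, I would invoke Lemma~\ref{lm:Weyl}, which represents each $(\mathfrak{g}_Q)_{\mathbf{d}}$ as a free $\mathbb{Q}[t]$-module on $\mathrm{Ker}(\partial_t)_{\mathbf{d}}$. Since the action of $t$ raises homological degree by $2$, dualizing and taking characters turns this isomorphism into
\[
\mathrm{ch}((\mathfrak{g}_Q^\vee)_{\mathbf{d}},q)=\frac{1}{1-q}\sum_{n\ge 0}\dim(\mathrm{Ker}(\partial_t)_{\mathbf{d},n})q^{\frac12 n},
\]
and clearing the denominator yields exactly the formula in the statement.

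Since Lemma~\ref{lm:Weyl} and Propositions~\ref{prop:Weyl} and~\ref{prop:Lyndon} have all been established, no genuine obstacle remains; the main conceptual content is the interplay between non-negativity (coming from the Weyl algebra action on $\mathfrak{g}_Q$) and bounded support (coming from the Gr\"obner--Shirshov basis analysis in the spirit of \cite{MR2889742}). Neither ingredient alone delivers the polynomial statement, but together they force it.
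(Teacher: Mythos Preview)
Your proof is correct and essentially identical to the paper's own argument: the paper also factors $1-q^{|\mathbf{d}|}=(1-q)(1+q+\cdots+q^{|\mathbf{d}|-1})$, observes that $(1-q^{|\mathbf{d}|})\mathrm{ch}((\mathfrak{g}_Q^\vee)_{\mathbf{d}},q)$ is thereby exhibited as the product of a non-negative polynomial with a non-negative power series (the latter already identified with $\sum_{n\ge 0}\dim(\mathrm{Ker}(\partial_t)_{\mathbf{d},n})q^{n/2}$ via Lemma~\ref{lm:Weyl}), and concludes that the power series must be a polynomial since the product is. Your presentation makes the ``no cancellation'' step slightly more explicit, but the logic and the ingredients are the same.
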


\begin{proof}
According to Proposition \ref{prop:Lyndon}, 
 \[
(1-q^{|\mathbf{d}|})\mathrm{ch}((\mathfrak{g}_Q^\vee)_{\mathbf{d}},q)\in \mathbb{Z}_{\ge 0}[q^{\frac12}].
 \]
At the same time, Proposition \ref{prop:Weyl} implies that 
 \[
(1-q^{|\mathbf{d}|})\mathrm{ch}((\mathfrak{g}_Q^\vee)_{\mathbf{d}},q)=\frac{1-q^{|\mathbf{d}|}}{1-q}(1-q)\mathrm{ch}((\mathfrak{g}_Q^\vee)_{\mathbf{d}},q)
 \]
is a product of a polynomial in $q^{\frac12}$ with non-negative coefficients and a power series in $q^{\frac12}$ with non-negative coefficients equal to 
 \[
\sum_{n\ge 0} \dim(\mathrm{Ker}(\partial_t)_{\mathbf{d},n})q^{\frac12n}.
 \]
This means that the latter power series must be a polynomial, concluding the proof.
\end{proof}

\subsection{Relationship to vertex Lie algebras}

We shall now see that the character $\mathrm{ch}((\mathfrak{g}_Q^\vee)_{\mathbf{d}},q)$ is directly related to the refined Donaldson--Thomas invariants of $Q$. To that end, we shall use the second key ingredient, that is, vertex Lie algebras (also known as Lie conformal algebras); the reader is invited to consult \cite[Sec. 4]{dotsenko_mozgovoy} and \cite{Ro} for details. Let us denote by $C_Q$ the free vertex Lie algebra corresponding to the locality function $N_Q(i,j)=m_{i,j}-\delta_{i,j}$; according to \cite[Sec. 5.1]{dotsenko_mozgovoy}, it is isomorphic to the free vertex Lie algebra corresponding to the non-negative locality function $N_Q^+(i,j)=\max(N_Q(i,j),0)$. Using the general theory of vertex Lie algebras, we may associate to the vertex Lie algebra $C_Q$ an honest Lie algebra $\mathfrak{L}_Q$, the \emph{coefficient algebra of $C_Q$}; moreover, we have a graded vector space decomposition 
 \[
\mathfrak{L}_Q=\mathfrak{L}_Q^-\oplus \mathfrak{L}_Q^+,
 \]
where $\mathfrak{L}_Q^-$ and $\mathfrak{L}_Q^+$ are Lie subalgebras of $\mathfrak{L}_Q$. It is established in \cite[Sec. 3]{Ro} that both the Lie algebra $\mathfrak{L}_Q$ and its subalgebra $\mathfrak{L}_Q^+$ admit explicit presentations by generators and relations as follows. The Lie algebra $\mathfrak{L}_Q$ is generated by elements $i(k)$ of degree $(\alpha_i,2k+m_{i,i}+1)$, $i\in Q_0$, $k\in\mathbb{Z}$, subject to the relations
\begin{equation}\label{eq:local}
\sum_{p=0}^{N_Q^+(i,j)} (-1)^p\binom{N_Q^+(i,j)}{p} [i(k-p),j(l+p)]=0 
\end{equation}
for all $i, j\in Q_0$, and the Lie algebra $\mathfrak{L}_Q^+$ is generated by elements $i(k)$ of degree $(\alpha_i,2k+m_{i,i}+1)$, $i\in Q_0$, $k\ge 0$, subject to those of the relations \eqref{eq:local} that only contain the generators $i(k)$ with $k\ge 0$. The subalgebra $\mathfrak{L}_Q^-$ is defined more indirectly. 
Examining the relations \eqref{eq:local}, we see that we have a Lie algebra isomorphism 
 \[
\mathfrak{L}_Q^+\cong\fg_Q 
 \]
sending $i(k)$ to $b_{i,k}$. We shall now use this observation to obtain a new interpretation of the motivic generating function. 

\begin{thm}\label{th:char-symmetry}
The Poincar\'e series $P(U(\fg_Q)^\vee,x,q)$ belongs to the subring 
 \[
\mathbb{Q}(q^{\frac12})[[x_i\colon i\in Q_0]]\subset\mathbb{Q}((q^{\frac12}))[[x_i\colon i\in Q_0]].
 \] 
In that subring, we have the equality 
 \[
A_Q(x,q)P(U(\fg_Q)^\vee,x,q)=1.
 \]
\end{thm}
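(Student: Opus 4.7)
The plan is to combine the isomorphism $\fg_Q\cong\mathfrak{L}_Q^+$ established just before the theorem with the main result of \cite{dotsenko_mozgovoy}. That paper identifies the graded dual $\mathcal{H}_Q^\vee$ of the cohomological Hall algebra with the canonical coalgebra structure on the universal envelope of the vertex Lie algebra $C_Q$; as a graded vector space this universal envelope is $U(\mathfrak{L}_Q^-)$ by PBW applied to the triangular decomposition $\mathfrak{L}_Q=\mathfrak{L}_Q^-\oplus\mathfrak{L}_Q^+$. Passing to Poincar\'e series and using $A_Q(x,q)=P(\mathcal{H}_Q,x,q)$, one obtains the key translation
\[
A_Q(x,q)=P(U(\mathfrak{L}_Q^-)^\vee,x,q).
\]
So the target identity becomes the statement $P(U(\mathfrak{L}_Q^-)^\vee,x,q)\cdot P(U(\mathfrak{L}_Q^+)^\vee,x,q)=1$, an assertion purely about the two halves of the coefficient Lie algebra.

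To establish this I would construct an acyclic chain complex whose Euler characteristic equals the left-hand side. The natural candidate is a Chevalley--Eilenberg-style complex for the whole coefficient algebra $\mathfrak{L}_Q$, with the triangular decomposition $\mathfrak{L}_Q=\mathfrak{L}_Q^-\oplus\mathfrak{L}_Q^+$ used to factor the complex as a bicomplex whose two sides exactly recover $U(\mathfrak{L}_Q^\pm)^\vee$. The Weyl algebra action of $\mathrm{Diff}_1$ introduced earlier in the section is crucial here: it permits one to pass from the generators $i(k)$ indexed by all $k\in\mathbb{Z}$ to a manageable family by quotienting by the free $\mathbb{Q}[t]$-action (Lemma \ref{lm:Weyl}), so that the spectral sequence associated to the filtration by $\mathrm{Ker}(\partial_t)$-weight collapses to a trivial module. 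The defining local relations of $\mathfrak{L}_Q$ in \eqref{eq:local} (which are shared with those of $\fg_Q$ beyond the shift in the range of $k$) produce exactly the matching cancellations between the two factors.

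Once the identity $A_Q(x,q)\cdot P(U(\fg_Q)^\vee,x,q)=1$ is in hand, the rationality claim is immediate: $A_Q(x,q)\in\mathbb{Q}(q^{\frac12})[[x_i\colon i\in Q_0]]$ with constant term $1$ is invertible in that subring, and its inverse is $P(U(\fg_Q)^\vee,x,q)$. The main obstacle is clearly the second step. A naive PBW argument for $U(\mathfrak{L}_Q)$ is not available, since its graded pieces are infinite-dimensional; one must either complete the tensor product, truncate and take a limit, or genuinely work with the bar complex of $\mathcal{A}_Q$ so as to realize the product of Poincar\'e series as an honest Euler characteristic. As emphasized in the introduction, the symmetry between positive and negative halves at work here is not at all obvious, and its technical embodiment --- the choice of the correct completion or filtration and the verification that the Weyl algebra action yields the right splitting --- is where the bulk of the work lies.
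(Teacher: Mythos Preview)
Your first reduction is fine: the main theorem of \cite{dotsenko_mozgovoy} does give $A_Q(x,q)=P(U(\mathfrak{L}_Q^-)^\vee,x,q)$, so the statement is equivalent to
\[
P(U(\mathfrak{L}_Q^-)^\vee,x,q)\cdot P(U(\mathfrak{L}_Q^+)^\vee,x,q)=1.
\]
But this reformulation is the entire content of the theorem; as the introduction of the paper stresses, the two halves $\mathfrak{L}_Q^\pm$ are \emph{not} related by any obvious symmetry, and your proposal does not actually prove the identity. The two mechanisms you suggest both break down. A Chevalley--Eilenberg complex for $\mathfrak{L}_Q$ has underlying space $S(s^{-1}\mathfrak{L}_Q)\cong S(s^{-1}\mathfrak{L}_Q^-)\otimes S(s^{-1}\mathfrak{L}_Q^+)$, and every $(\mathbf d,n)$-graded piece of this is infinite-dimensional (already for $|\mathbf d|=2$ the constraint on the total degree leaves one integer parameter free), so no Poincar\'e series can be extracted from it; the same infinite-dimensionality prevents the product of the two Poincar\'e series from even being defined before rationality of the $P(U(\mathfrak{L}_Q^+)^\vee)$ side is known. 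The $\mathrm{Diff}_1$-argument via Lemma~\ref{lm:Weyl} requires the module to be bounded below in the $q$-degree, which holds for $\mathfrak{L}_Q^+$ but fails for $\mathfrak{L}_Q$ and for $\mathfrak{L}_Q^-$, so it cannot produce the cancellation between the two halves that you need.

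The paper's proof takes a completely different route that circumvents both issues. It uses the twisted subalgebras $\mathfrak{L}_{Q,\bw}^+=\tau_\bw(\mathfrak{L}_Q^+)$ and the identification of the CoHA-modules $\M_\bw$ with $(U(\mathfrak{L}_{Q,\bw}^+)\otimes_{U(\mathfrak{L}_Q^+)}\mathbb{Q})^\vee$ from \cite{dotsenko_mozgovoy}. For each finite $\bw$ one gets an honest identity between series whose coefficients are polynomials in $q^{\pm1/2}$ (because each graded piece of $\M_\bw$ is the cohomology of an algebraic variety), namely
\[
P\bigl((U(\mathfrak{L}_Q^+)\otimes_{\tau_\bw^{-1}(U(\mathfrak{L}_Q^+))}\mathbb{Q})^\vee,x,q\bigr)=S_{2\bw}A_Q(x,q)\cdot A_Q(x,q)^{-1}.
\]
Letting $\bw\to\infty$, the right-hand side tends to $A_Q(x,q)^{-1}$ (since $S_{2\bw}A_Q\to 1$ via Proposition~\ref{prop:poincare series}), while the left-hand side stabilises to $P(U(\mathfrak{L}_Q^+)^\vee,x,q)$. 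The finite-$\bw$ truncation is exactly what makes the argument work: it replaces the ill-defined product with a family of well-defined identities.
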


\begin{proof}
We shall use the results of \cite[Sec. 6]{dotsenko_mozgovoy} that interpret the CoHA-modules $\M_\bw$, $\bw\in\mathbb{Z}_{\ge 0}^{Q_0}$ of \cite{franzen_chow,franzen_cohomology} in the context of vertex Lie algebras. For that, we shall consider the automorphism $\tau_\bw$ of the Lie algebra $\mathfrak{L}_Q$ defined on the generators as $\tau_\bw(i(n)):= i(n-\bw_i)$. 
This automorphism may be used to define two new Lie subalgebras $\mathfrak{L}_{Q,\bw}^-:=\tau_\bw(\mathfrak{L}_Q^-)$ and $\mathfrak{L}_{Q,\bw}^+:=\tau_\bw(\mathfrak{L}_Q^+)$. According to \cite[Th. 6.4]{dotsenko_mozgovoy}, we have the isomorphism
 \[
\M_\bw=(U(\mathfrak{L}_{Q,\bw}^+)\otimes_{U(\mathfrak{L}_{Q}^+)}\mathbb{Q})^\vee,
 \] 
so 
 \[
P(\M_\bw,x,q)=P((U(\mathfrak{L}_{Q,\bw}^+)\otimes_{U(\mathfrak{L}_{Q}^+)}\mathbb{Q})^\vee,x,q).
 \]
Note that using the isomorphism $\tau_\bw$, we can write 
 \[
U(\mathfrak{L}_{Q,\bw}^+)\otimes_{U(\mathfrak{L}_{Q}^+)}\mathbb{Q}=\tau_\bw(U(\mathfrak{L}_{Q}^+)\otimes_{U(\tau_\bw^{-1}(\mathfrak{L}_{Q}^+))}\mathbb{Q}) ,
 \]
so 
 \[
P((U(\mathfrak{L}_{Q,\bw}^+)\otimes_{U(\mathfrak{L}_{Q}^+)}\mathbb{Q})^\vee,x,q)=S_{-2\bw}P((U(\mathfrak{L}_{Q}^+)\otimes_{\tau_\bw^{-1}(U(\mathfrak{L}_{Q}^+))}\mathbb{Q})^\vee,x,q),
 \]
where $S_{\bw}(x^\bd)=q^{\frac12\bw\cdot\bd}x^\bd$. It follows that
 \[
P((U(\mathfrak{L}_{Q}^+)\otimes_{\tau_\bw^{-1}(U(\mathfrak{L}_{Q}^+))}\mathbb{Q})^\vee,x,q)=S_{2\bw}P((U(\mathfrak{L}_{Q,\bw}^+)\otimes_{\mathfrak{L}_{Q}^+}\mathbb{Q})^\vee,x,q)=S_{2\bw}P(\M_\bw,x,q).
 \]
Note that \emph{a priori} the left hand side of this equation is a power series with coefficients in $\mathbb{Q}[[q^{\frac12}]]$, and the right hand side is a power series with coefficients in $\mathbb{Q}[q^{\pm\frac12}]$, since each graded component of $\M_\bw$ is (degree shifted) cohomology of an algebraic variety, so in reality both sides have polynomials in $q^{\frac12}$ as coefficients. According to \cite[Prop. 3.4]{dotsenko_mozgovoy}, we have 
 \[
P(\M_\bw,x,q)=A_Q(x,q)\cdot S_{-2\bw}A_Q(x,q)^{-1} ,
 \]
so 
 \[
S_{2\bw}P(\M_\bw,x,q)=S_{2\bw}A_Q(x,q)\cdot A_Q(x,q)^{-1},
 \]
and to compare it with $P((U(\mathfrak{L}_{Q}^+)\otimes_{\tau_\bw^{-1}(U(\mathfrak{L}_{Q}^+))}\mathbb{Q})^\vee,x,q)$, we wish to expand it as a formal power series with coefficients in $\mathbb{Q}[[q^{\frac12}]]$, which we may do using the result of Proposition \ref{prop:poincare series} for the motivic generating function. This shows that $S_{2\bw}A_Q(x,q)$ has the limit $1$ as $\bw\to\infty$ (that is, all $\bw_i\to\infty$), so the limit of $S_{2\bw}P(\M_\bw,x,q)$ is $A_Q(x,q)^{-1}$. On the other hand, $P((U(\mathfrak{L}_{Q}^+)\otimes_{\tau_\bw^{-1}(U(\mathfrak{L}_{Q}^+))}\mathbb{Q})^\vee,x,q)$ has the limit $P(U(\mathfrak{L}_Q^+)^\vee,x,q)$ as $\bw\to\infty$ (since the graded components of the relative tensor product manifestly stabilize). Recalling that $\fg_Q\cong \mathfrak{L}_Q^+$, we see that 
 \[
P(U(\fg_Q)^\vee,x,q)=A_Q(x,q)^{-1} , 
 \] 
as required.  
\end{proof}

\subsection{A new proof of positivity of the refined DT invariants}
We are now ready to relate the Lie algebra $\fg_Q$ to the refined Donaldson--Thomas invariants of the quiver $Q$.

\begin{thm}\label{thm:koszul-and-dt}
The refined Donaldson--Thomas invariants of $Q$ can be computed using the $\mathrm{Diff}_1$-module structure on $\mathfrak{g}_Q$: for each $\mathbf{d}\in\mathbf{Z}_{\ge 0}^{Q_0}$, we have 
 \[
\mathrm{DT}_\mathbf{d}(q)=\sum_{n\ge 0} \dim(\mathrm{Ker}(\partial_t)_{\mathbf{d},n})q^{\frac12n-1}.
 \] 
In particular, $\mathrm{DT}_\mathbf{d}(q)\in\mathbb{Z}_{\ge 0}[q^{\pm\frac12}]$. 
\end{thm}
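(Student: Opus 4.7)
The plan is to bridge Theorem \ref{th:char-symmetry} and Proposition \ref{prop:Liepositivity} by means of PBW and the plethystic exponential, and then to match the resulting identity term by term against the defining identity for the refined DT invariants.

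Concretely, I would first substitute $q\mapsto q^{-1}$ in Theorem \ref{th:char-symmetry} and use $P(V^\vee, x, q) = P(V, x, q^{-1})$ to rewrite it as $A_Q(x, q^{-1})^{-1} = P(U(\mathfrak{g}_Q), x, q)$. Invoking the Poincar\'e--Birkhoff--Witt isomorphism $U(\mathfrak{g}_Q) \cong S(\mathfrak{g}_Q)$ as weight-graded super vector spaces, together with the plethystic exponential formula from Section \ref{sec:PoSe}, upgrades this to $A_Q(x, q^{-1}) = \mathrm{Exp}(-P(\mathfrak{g}_Q, x, q))$. Comparing with the defining identity
\[
A_Q(x, q^{-1}) = \mathrm{Exp}\Bigl(\tfrac{1}{1-q} \sum_{\mathbf{d}} (-1)^{\chi(\mathbf{d}, \mathbf{d})} \mathrm{DT}_\mathbf{d}(q^{-1}) x^\mathbf{d}\Bigr)
\]
and using the injectivity of $\mathrm{Exp}$ on $\mathfrak{m}_Q$ then gives
\[
-P(\mathfrak{g}_Q, x, q) = \frac{1}{1-q} \sum_{\mathbf{d}} (-1)^{\chi(\mathbf{d}, \mathbf{d})} \mathrm{DT}_\mathbf{d}(q^{-1}) x^\mathbf{d}.
\]
Extracting the coefficient of $x^\mathbf{d}$ and using $\mathrm{ch}((\mathfrak{g}_Q)_\mathbf{d}, q) = \mathrm{ch}((\mathfrak{g}_Q^\vee)_\mathbf{d}, q^{-1})$ together with Proposition \ref{prop:Liepositivity} applied at $q^{-1}$ expresses the left-hand side explicitly in terms of $\dim\mathrm{Ker}(\partial_t)_{\mathbf{d}, n}$. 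The sign $(-1)^{\chi(\mathbf{d}, \mathbf{d})}$ in the DT formula matches the parity of homological degrees in $(\mathfrak{g}_Q)_\mathbf{d}$, which is $\chi(\mathbf{d}, \mathbf{d})\bmod 2$, so the two signs cancel, and after clearing the common factor $1-q$ one recovers $\mathrm{DT}_\mathbf{d}(q^{-1}) = q \sum_n \dim(\mathrm{Ker}(\partial_t)_{\mathbf{d}, n}) q^{-n/2}$, which rewrites as the claimed formula in $q$. The nonnegativity and polynomial nature of $\mathrm{DT}_\mathbf{d}(q)$ then follow at once from nonnegativity of dimensions combined with Proposition \ref{prop:Lyndon}, which guarantees that only finitely many $\dim\mathrm{Ker}(\partial_t)_{\mathbf{d}, n}$ are non-zero.

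The main obstacle I anticipate is bookkeeping with completions: $P(\mathfrak{g}_Q, x, q)$ lives a priori in $\mathbb{Q}((q^{-1/2}))[[x_i \colon i \in Q_0]]$ while the DT generating series on the right lies in $R_Q = \mathbb{Q}((q^{1/2}))[[x_i \colon i \in Q_0]]$, so the two instances of $\mathrm{Exp}$ live in different completions. Proposition \ref{prop:Liepositivity} rescues this: it shows that each coefficient of $x^\mathbf{d}$ in $P(\mathfrak{g}_Q, x, q)$ is a rational function of $q^{1/2}$, so the identity between plethystic logarithms can be recorded inside $\mathbb{Q}(q^{1/2})[[x_i \colon i \in Q_0]]$, where injectivity of $\mathrm{Exp}$ applies to both sides simultaneously. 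Beyond this subtlety, the argument is essentially a matter of careful sign-tracking, greatly eased by the parity match noted above.
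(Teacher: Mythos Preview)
Your proposal is correct and follows essentially the same route as the paper: combine Theorem \ref{th:char-symmetry} with the defining plethystic identity for the DT invariants via PBW, match the sign $(-1)^{\chi(\mathbf{d},\mathbf{d})}$ with the uniform parity of $(\mathfrak{g}_Q)_\mathbf{d}$, and invoke Proposition \ref{prop:Liepositivity}. The only cosmetic difference is that the paper works directly with $A_Q(x,q)$ and $P(U(\mathfrak{g}_Q)^\vee,x,q)$ rather than first passing to $q^{-1}$, but the completion issue you flag (and resolve) is the same in both versions.
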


\begin{proof}
We have already established that the Lie algebra $\mathfrak{g}_Q$ is a free $\mathbb{Q}[t]$-module with the space of generators $\mathrm{Ker}(\partial_t)$. It remains to relate the refined Donaldson--Thomas invariants to the latter space of generators. Combining the definition 
$$A_Q(x,q^{-1})=\mathrm{Exp}\left(\frac{1}{1-q}\sum_{\mathbf{d}\in\mathbb{Z}_{\ge 0}^{Q_0}} (-1)^{\chi({\bf d},{\bf d})} {\rm DT}_{\bf d}(q^{-1})x^{\bf d}\right)$$
of the refined Donaldson--Thomas invariants with plethystic logarithm of the formula 
 \[
A_Q(x,q)P(U(\fg_Q)^\vee,x,q)=1
 \] 
of Theorem \ref{th:char-symmetry}, we conclude that 
 \[
\frac{(-1)^{\chi({\bf d},{\bf d})} {\rm DT}_{\bf d}(q)}{1-q^{-1}}+P((\mathfrak{g}_Q^\vee)_\mathbf{d},q)=0.
 \]
Let us recall that the homological degree of each generator $b_{i,k}$ of the Lie algebra $\fg_Q$ is equal to $2k+m_{i,i}+1\equiv m_{i,i}+1\pmod{2}$. Thus, all elements of $(\mathfrak{g}_Q)_{\mathbf{d}}$ (and of $(\mathfrak{g}_Q^\vee)_{\mathbf{d}}$) are in homological degree congruent to $\sum_{i\in Q_0}(m_{i,i}+1)\mathbf{d}_i$ modulo~$2$. However, 
 \[
\sum_{i\in Q_0}(m_{i,i}+1)\mathbf{d}_i\equiv \sum_{i\in Q_0}(1-m_{i,i})\mathbf{d}_i^2\equiv \chi(\mathbf{d},\mathbf{d})\pmod{2}, 
 \]
so we have
 \[
P((\mathfrak{g}_Q^\vee)_{\mathbf{d}},q)=(-1)^{\chi(\mathbf{d},\mathbf{d})}\mathrm{ch}((\mathfrak{g}_Q^\vee)_{\mathbf{d}}),
 \]
implying that
 \[
\frac{(-1)^{\chi({\bf d},{\bf d})} {\rm DT}_{\bf d}(q)}{1-q^{-1}}+(-1)^{\chi({\bf d},{\bf d})}\mathrm{ch}(\mathfrak{g}^\vee_Q)_\mathbf{d},q)=0.
 \]
This simplifies to 
 \[
{\rm DT}_{\bf d}(q)=q^{-1}(1-q)\mathrm{ch}(\mathfrak{g}^\vee_Q)_\mathbf{d},q), 
 \]
and it remains to apply Proposition \ref{prop:Liepositivity} to conclude that 
 \[
{\rm DT}_{\bf d}(q)=q^{-1}\sum_{n\ge 0} \dim(\mathrm{Ker}(\partial_t)_{\mathbf{d},n})q^{\frac12n},
 \]
as required.
\end{proof}

\section{The Koszulness conjecture}\label{koszulness}
We begin this section with recording the following theorem which is an immediate consequence of the results obtained in the previous sections. 

\begin{thm}
The algebra $\mathcal{A}_Q$ is numerically Koszul for every symmetric quiver $Q$.
\end{thm}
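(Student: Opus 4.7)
The plan is to derive numerical Koszulness as a direct corollary of the identities already established, so the proof should consist almost entirely of chasing Poincaré series through the results of Sections \ref{sec:quad-alg}--\ref{sec:bar-and-dt}. Recall that by definition, numerical Koszulness of $\mathcal{A}_Q$ means
\[
P(\mathcal{A}_Q,x,q)\,P((\mathcal{A}_Q^!)^\vee,q^{-\frac12}x,q)=1,
\]
so the task reduces to producing this identity. The description of the Koszul dual in Proposition \ref{Koszuldual} identifies $\mathcal{A}_Q^!$ with $U(\mathfrak{g}_Q)$ as weight-graded associative algebras, so $(\mathcal{A}_Q^!)^\vee$ and $U(\mathfrak{g}_Q)^\vee$ have the same Poincaré series, and the problem becomes
\[
P(\mathcal{A}_Q,x,q)\,P(U(\mathfrak{g}_Q)^\vee,q^{-\frac12}x,q)=1.
\]

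The next step is to rewrite $P(\mathcal{A}_Q,x,q)$ using Proposition \ref{prop:poincare series}, which gives $A_Q(x,q)=P(\mathcal{A}_Q,q^{\frac12}x,q)$; equivalently, after the substitution $x\mapsto q^{-\frac12}x$, we obtain $P(\mathcal{A}_Q,x,q)=A_Q(q^{-\frac12}x,q)$. The desired identity is therefore equivalent to
\[
A_Q(q^{-\frac12}x,q)\,P(U(\mathfrak{g}_Q)^\vee,q^{-\frac12}x,q)=1.
\]
Both factors are evaluated at $q^{-\frac12}x$, so it suffices to prove the identity before that substitution, namely
\[
A_Q(x,q)\,P(U(\mathfrak{g}_Q)^\vee,x,q)=1,
\]
which is precisely the content of Theorem \ref{th:char-symmetry}.

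I would organize the write-up as a short paragraph pointing out that Proposition \ref{Koszuldual}, Proposition \ref{prop:poincare series}, and Theorem \ref{th:char-symmetry} compose to give the claim, being careful that the argument takes place in the common subring $\mathbb{Q}(q^{\frac12})[[x_i\colon i\in Q_0]]$ in which all three identities are proved, so that the substitution $x\mapsto q^{-\frac12}x$ is legitimate. There is no real obstacle here: the work has already been done in Section \ref{sec:bar-and-dt}, where the vertex-Lie-algebra identification $\mathfrak{g}_Q\cong \mathfrak{L}_Q^+$ and the CoHA-module limit argument yielded $A_Q(x,q)\,P(U(\mathfrak{g}_Q)^\vee,x,q)=1$; the only remaining task is the bookkeeping of the variable change $x\mapsto q^{\pm\frac12}x$ that relates the motivic generating series to $P(\mathcal{A}_Q,x,q)$ and accounts for the cohomological shift built into the definition of the Koszul dual.
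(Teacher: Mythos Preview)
Your proposal is correct and follows essentially the same approach as the paper: both combine the identification $\mathcal{A}_Q^!\cong U(\mathfrak{g}_Q)$ from Proposition~\ref{Koszuldual}, the relation $A_Q(x,q)=P(\mathcal{A}_Q,q^{\frac12}x,q)$ from Proposition~\ref{prop:poincare series}, and the identity $A_Q(x,q)\,P(U(\mathfrak{g}_Q)^\vee,x,q)=1$ from Theorem~\ref{th:char-symmetry}, then perform the substitution $x\mapsto q^{-\frac12}x$ to obtain the numerical Koszulness relation. Your version is slightly more explicit about the ambient ring $\mathbb{Q}(q^{\frac12})[[x_i]]$ in which the substitution is made, but the argument is the same.
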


\begin{proof}
According to Theorem \ref{th:char-symmetry}, we have
 \[
A_Q(x,q)P(U(\fg_Q)^\vee,x,q)=1. 
 \]
Since $\mathcal{A}_Q^!\cong U(\fg_Q)$, the result of Proposition \ref{prop:poincare series} implies that
 \[
P(\mathcal{A}_Q,q^{\frac12}x,q)P((\mathcal{A}^!)^\vee,x,q)=1,
 \]
which, up to a change of variables, is precisely the numerical Koszulness of $\mathcal{A}_Q$.
\end{proof}

This result may be interpreted as strong evidence for the following conjecture. 

\begin{conj}\label{conj:Koszul}
The algebra $\mathcal{A}_Q$ is Koszul for every symmetric quiver $Q$. 
\end{conj}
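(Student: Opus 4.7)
The plan is to lift the numerical Koszulness just established to genuine homological Koszulness by exploiting the vertex-algebraic structure of $\mathcal{A}_Q$ together with the $\mathrm{Diff}_1$-action of Section~\ref{sec:bar-and-dt}. Dually, Koszulness of $\mathcal{A}_Q$ is equivalent to acyclicity in positive internal homological degree of the Chevalley--Eilenberg complex $\mathsf{CE}(\mathfrak{g}_Q)$ computing the homology of $\mathfrak{g}_Q$ with coefficients in $\mathbb{Q}$. The first step is to exploit the $\mathrm{Diff}_1$-structure: since $\mathfrak{g}_Q$ is a free $\mathbb{Q}[t]$-module on $\mathrm{Ker}(\partial_t)$, its exterior/symmetric powers (in the appropriate supercategory) inherit a natural bigrading by $t$-weight and by ``generational'' degree along $\mathrm{Ker}(\partial_t)$, and the complex $\mathsf{CE}(\mathfrak{g}_Q)$ acquires an auxiliary filtration compatible with this bigrading.

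The heart of the approach is to show that, with respect to this filtration, the associated graded of $\mathsf{CE}(\mathfrak{g}_Q)$ decomposes as a tensor product of a ``horizontal'' complex supported on $\mathrm{Ker}(\partial_t)$ with a trivial ``vertical'' $t$-Koszul complex. The horizontal complex should compute the homology of the classical Lie (super)algebra naturally attached to the finite-dimensional monomial quadratic algebra
\[
\mathcal{B}_Q=\mathbb{Q}[a_{i,0}\colon i\in Q_0]/(a_{i,0}a_{j,0}\colon m_{i,j}>0),
\]
whose Koszulness is immediate from Proposition~\ref{prop:KoszulGB}. The vertical complex is acyclic by the usual Koszul identity for polynomial rings. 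A K\"unneth-type argument then combines the two to yield acyclicity of the full complex, and hence Koszulness of $\mathcal{A}_Q$. As an independent cross-check, the numerical Koszulness already established forces the correct Euler characteristic at every multidegree, so the only thing one actually needs to verify is the vanishing of the higher cohomology groups.

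The main obstacle is the case $m_{i,j}>1$: for such quivers, the relations of $\mathcal{A}_Q$ encode higher-order vanishing of $a_i(z)a_j(w)$ along the diagonal $z=w$, going well beyond the relations of the arc algebra of $\mathcal{B}_Q$. As observed above, the two standard Koszulness criteria (quadratic Gr\"obner basis and regular sequence) already fail outside a restricted class of quivers, so the combinatorial obstruction is real and no purely monomial manipulation will suffice. I expect the key technical difficulty to lie in verifying that the proposed decomposition of $\mathsf{CE}(\mathfrak{g}_Q)$ respects the Chevalley--Eilenberg differential, and not merely the underlying graded vector-space structure; this in turn should reduce to a precise compatibility statement between the $\mathrm{Diff}_1$-action and the Lie bracket of $\mathfrak{g}_Q$, to be checked using the explicit presentation of $\mathfrak{g}_Q$ given in Proposition~\ref{Koszuldual} and the description of the defining relations as $(z-w)^{a_{ij}}[b_i(z),b_j(w)]$ that made the $\mathrm{Diff}_1$-action transparent in the first place.
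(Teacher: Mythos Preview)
This statement is a \emph{conjecture} in the paper, not a theorem: the authors explicitly write that they ``can not prove this conjecture in full generality'' and only establish it for the special classes treated in Section~\ref{koszulness}. So there is no paper proof to compare your proposal against; you are attempting to settle an open problem.

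As written, your outline does not constitute a proof, and there is a concrete reason why the proposed reduction cannot work in the stated form. Your ``horizontal'' target is the Chevalley--Eilenberg complex of the Koszul dual Lie algebra of
\[
\mathcal{B}_Q=\mathbb{Q}[a_{i,0}\colon i\in Q_0]/(a_{i,0}a_{j,0}\colon m_{i,j}>0),
\]
but $\mathcal{B}_Q$ depends only on the \emph{support} of the matrix $(m_{i,j})$, not on its actual values. Thus your argument, if it went through, would give the same Koszul complex homology for every quiver with the same edge pattern, regardless of multiplicities. Already for the one-vertex quiver with $m\ge 1$ loops this is impossible: $\mathcal{B}_Q=\mathbb{Q}[a_0]/(a_0^2)$ is the same for all $m\ge 1$, while the algebras $\mathcal{A}_Q$ (and their Koszul complexes, and the DT invariants encoded in $\mathrm{Ker}(\partial_t)$) genuinely depend on $m$. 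So the identification of the horizontal piece with the Koszul dual of $\mathcal{B}_Q$ is false, not merely unverified.

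The more modest hope --- that the associated graded of $\mathsf{CE}(\mathfrak{g}_Q)$ under a $t$-filtration splits as (polynomial Koszul complex)$\otimes$(Chevalley--Eilenberg of $\mathrm{Ker}(\partial_t)$) --- is also problematic. While $\mathrm{Ker}(\partial_t)$ is indeed a Lie subalgebra (since $p$, and hence $\partial_t$, is a derivation), the bracket does not respect the $t$-grading in the way a current-algebra bracket would: the derivation property gives $q^{n}[u,v]=\sum_k\binom{n}{k}[q^ku,q^{n-k}v]$, so $[q^au,q^bv]$ is a nontrivial linear combination of $q^c[u',v']$ with varying $c$, not simply $q^{a+b}[u,v]$. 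You yourself flag exactly this compatibility as the ``key technical difficulty'' and leave it unchecked; until it is resolved (and it cannot be resolved with $\mathcal{B}_Q$ as the target, by the dimension-count above), the proposal remains a heuristic rather than a proof.
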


In this section, we survey the situations in which we managed to prove this conjecture. 

\subsection{Koszulness via quadratic Gr\"obner bases}\label{sec:GB}

Let us classify, for a certain ordering of monomials, all quivers $Q$ such that the algebra $\mathcal{A}_Q$ has a noncommutative quadratic Gr\"obner basis. Let us choose some ordering of $Q_0$, and order the generators of $\mathcal{A}_Q$ as follows: $a_{i,k}<a_{j,l}$ if $k<l$ or if $k=l$ and $i<j$. This ordering of generators gives rise to the corresponding graded lexicographic ordering of monomials: to compare two monomials, we first compare their lengths, and if the lengths coincide, compare them lexicographically, letter by letter.  

\begin{definition}
Let $N$ be a positive integer. We say that a quiver $Q$ is almost $N$-regular if for all $i\ne j\in Q_0$, we have $m_{ij}=N$, and for each $i\in Q_0$ we have either $m_{ii}=N$ or $m_{ii}=N+1$.
\end{definition}

\begin{thm}\label{Nregular}
Let $Q$ be a symmetric quiver. For the ordering defined above, the algebra $\mathcal{A}_Q$ has a noncommutative quadratic Gr\"obner basis of relations if and only if the quiver $Q$ is almost $N$-regular. In particular, for each almost $N$-regular quiver $Q$, the algebra $\mathcal{A}_Q$ is Koszul.
\end{thm}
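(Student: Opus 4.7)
The plan is to verify that the defining relations of $\mathcal{A}_Q$ themselves form a Gr\"obner basis with respect to the given graded lexicographic order precisely when $Q$ is almost $N$-regular, and then to invoke Proposition~\ref{prop:KoszulGB} for the Koszulness conclusion.

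First I would identify the leading monomials of all defining relations. For the supercommutativity relation $a_{i,k_1}a_{j,k_2}\pm a_{j,k_2}a_{i,k_1}=0$, the leading monomial is whichever arrangement starts with the greater letter; in the diagonal case $i=j$ with $m_{i,i}$ odd we additionally obtain $a_{i,k}^{2}=0$, whose leading monomial is $a_{i,k}^{2}$. For the generating-series relation $\sum_{k_1+k_2=k}\binom{k_2}{p}a_{i,k_1}a_{j,k_2}=0$ with $0\le p<m_{i,j}$, the coefficient $\binom{k_2}{p}$ vanishes for $k_2<p$, so the largest first letter occurs for $k_1=k-p$, giving leading monomial $a_{i,k-p}a_{j,p}$ with coefficient $1$. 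These leading monomials, indexed by $\ell\ge 0$ and $0\le p<m_{i,j}$, together with the commutativity leading monomials, describe the set of normal words combinatorially.

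Second, for the ``if'' direction I would apply Bergman's Diamond Lemma: for every overlap ambiguity $a_{i,k}a_{j,l}a_{m,p}$ in which both the prefix $a_{i,k}a_{j,l}$ and the suffix $a_{j,l}a_{m,p}$ are leading monomials of defining relations, compute the corresponding S-polynomial and check that it reduces to zero modulo the defining relations. Under almost $N$-regularity all off-diagonal values $m_{i,j}=N$ are uniform and each diagonal value $m_{i,i}\in\{N,N+1\}$ differs from $N$ only by a controlled parity-sensitive amount. This allows the Pascal/Vandermonde identities satisfied by the binomial coefficients $\binom{k_2}{p}$ to line up, so that the two reductions of the S-polynomial agree; any residual discrepancy along the diagonal is absorbed by the extra relation $a_{i,k}^{2}=0$ that appears exactly when $m_{i,i}$ is odd. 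Closure of all S-polynomial obstructions then yields the quadratic Gr\"obner basis, and Proposition~\ref{prop:KoszulGB} gives Koszulness.

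Third, for the ``only if'' direction I would exhibit explicit cubic obstructions when $Q$ fails to be almost $N$-regular. There are two qualitative failure modes: either there exist $i\ne j$ and $i'\ne j'$ with $m_{i,j}\ne m_{i',j'}$, or some diagonal $m_{i,i}$ lies outside $\{N,N+1\}$, where $N$ is the off-diagonal value. In each case I would pick three generators at small values of $k$, compute the S-polynomial of two overlapping type-B relations (or of a type-B relation with a commutativity relation), and exhibit a nonzero reduction whose leading monomial is a degree-$3$ word that is not divisible by any of the quadratic leading monomials identified in the first step. Since the reduced Gr\"obner basis is unique for each admissible order, such a cubic must belong to it, so no quadratic Gr\"obner basis can exist.

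The main obstacle will be the systematic verification in the second step: one has to handle overlaps of all three types (type-B with type-B, type-B with commutativity, and commutativity with commutativity) in a uniform way, and control the additional phenomena introduced by the loop exception $m_{i,i}\in\{N,N+1\}$, where the slight parity asymmetry of the diagonal relations has to be reconciled with the supercommutativity structure. The combinatorial task of pinpointing, for each non-regular $Q$ in the third step, a prototype cubic witness that survives reduction is secondary in difficulty but still requires a careful case analysis.
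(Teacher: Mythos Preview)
Your identification of leading monomials in step~1 is incorrect, and this undermines the rest of the argument. You claim that the generating-series relation $\sum_{k_1+k_2=k}\binom{k_2}{p}a_{i,k_1}a_{j,k_2}=0$ has leading monomial $a_{i,k-p}a_{j,p}$. But for $k-p>p$ this monomial is already the leading term of a supercommutativity relation, so it carries no new information. After subtracting suitable multiples of the commutativity relations, the relation becomes a linear combination of monomials $a_{i,k_1}a_{j,k_2}$ with $k_1\le k_2$ (or $k_1<k_2$, depending on the ordering of $i,j$), and its leading monomial is then determined by the largest \emph{first} letter among those, which is the monomial closest to the diagonal $k_1=k_2$. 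Moreover, for fixed $k$ the resulting relations for different values of $p$ need a further row reduction (this is precisely the content of the appendix Lemma~\ref{lem:Gauss} in the paper). The correct set of leading terms, recorded in Lemma~\ref{lm:quadLT}, is $\{a_{i,k}a_{j,l}:0\le l-k\le m_{i,j}\}$ for $i<j$ and $\{a_{i,k}a_{j,l}:0\le l-k\le m_{i,j}-1\}$ for $i\ge j$, in addition to the commutativity leading terms; in particular the range of $l-k$ goes up to $m_{i,j}$, not $m_{i,j}-1$, when $i<j$. Without the correct normal-form description your S-polynomial analysis cannot proceed.

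Even with the correct leading terms in hand, the paper takes a route quite different from your proposed S-polynomial verification. Rather than resolving overlap ambiguities directly, the paper invokes the version of the Diamond Lemma which says that a quadratic generating set is a Gr\"obner basis if and only if the cubic normal monomials form a basis of the weight-three part of the algebra. The point is that $\dim\mathcal{A}_{Q,\mathbf{d}}$ is known exactly for every $\mathbf{d}$ by the functional realisation of Proposition~\ref{prop:functional-realisation}, so one can simply compare the $q$-character of the set of cubic normal monomials with the $q$-character of $\mathcal{A}_{Q,\mathbf{d}}$ for each $\mathbf{d}$ of total degree~$3$. The computation then splits into the cases $\mathbf{d}=3\alpha_i$, $\mathbf{d}=2\alpha_i+\alpha_j$ (with $i<j$ and $i>j$ treated separately), and $\mathbf{d}=\alpha_i+\alpha_j+\alpha_k$; in each case the equality of characters reduces to an elementary polynomial identity in $q$ which holds precisely under the almost $N$-regularity constraints. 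This simultaneously handles both directions of the ``if and only if'' and bypasses the rather delicate binomial-coefficient bookkeeping that a direct S-polynomial computation would require.
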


The proof of this theorem occupies the rest of the section. We first prepare several lemmas.

One can easily see that among the leading terms of the relations of the algebra $\mathcal{A}_Q$ there are, in general, repetitions. To present a candidate for a Gr\"obner basis, one has to begin with replacing these relations by relations with pairwise distinct leading terms.

\begin{lem}\label{lm:quadLT}
The ideal of relations of the algebra $\mathcal{A}_Q$ has a system of generators whose set of pairwise distinct leading terms is 
\begin{gather*}
a_{i,k}a_{j,l}, \quad k>l,\\
a_{i,k}a_{j,l}, \quad 0\le l-k\le m_{i,j}, i<j,\\
a_{i,k}a_{j,l}, \quad 0\le l-k\le m_{i,j}-1, i\ge j.
\end{gather*}
\end{lem}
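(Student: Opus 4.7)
The plan is to produce, for each leading monomial listed in Groups A, B, C, an element of the defining ideal $I \subset T(V_Q)$ of $\mathcal{A}_Q$ whose leading monomial is the specified one, and then verify completeness of the list via a dimension count. Since all defining relations are quadratic, the whole ideal is generated by its quadratic part, and it suffices to work weight-by-weight, analysing the homogeneous piece of $I$ in each bi-grading $(\alpha_i+\alpha_j, K)$ where $K$ is the total $k$-degree. Distinctness of the listed leading monomials is immediate from the shape of the three conditions, so the substantive content is existence and completeness.

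I would first dispatch the easy cases. Each (nontrivial) commutativity relation $a_{i,k_1}a_{j,k_2} - (-1)^{m_{ii}m_{jj}} a_{j,k_2}a_{i,k_1}$ has leading monomial $a_{i,k}a_{j,l}$ where $(k,i) > (l,j)$ lexicographically; these exhaust Group A (when $k > l$) together with the $l-k=0$ members of Group C (when $k=l$ and $i > j$). For the second-group relation $r(i,j,K,p) := \sum_{k_1+k_2=K}\binom{k_2}{p}a_{i,k_1}a_{j,k_2}$, a direct inspection shows the leading monomial is the term with the largest $k_1$ having nonzero coefficient, namely $a_{i,K-p}a_{j,p}$. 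For $K > 2p$ this is already in Group A (redundant), whereas for $K \le 2p$ it realises Group B (if $i<j$) or Group C (if $i\ge j$) leading monomials with $l = p \le m_{i,j}-1$.

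The heart of the argument is to produce the remaining leading monomials in Group B, specifically those with $l-k$ as large as $m_{i,j}$, which cannot be obtained from a single $r(i,j,K,p)$. The idea is to combine $r(i,j,K,p)$ and $r(j,i,K,p)$ for all $p=0,\ldots,m_{i,j}-1$: modulo the commutativity relations, both families live in the $(K+1)$-dimensional space spanned by the monomials $\{a_{i,k_1}a_{j,K-k_1}\}$, with coefficient polynomials $\binom{K-k_1}{p}$ and $\binom{k_1}{p}$ respectively. Since these two collections together span the entire space of polynomials of degree less than $m_{i,j}$ in $k_1$, the combined relations cut out an $m_{i,j}$-dimensional subspace in each $K$-grading, and Gaussian elimination with respect to our ordering produces exactly one relation per achievable leading position, realising every missing Group B/C leading monomial.

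Completeness is then verified by a dimension count: Proposition \ref{prop:functional-realisation} and Corollary \ref{cor:series-dual} give $\dim(\mathcal{A}_Q)_{\alpha_i+\alpha_j, K}$, hence the codimension of $I$ in the corresponding bi-graded piece of the free algebra, and a tally of the leading monomials listed in Groups A, B, C in that piece matches the codimension exactly, forcing the list to be both correct and exhaustive. The main obstacle will be carrying out the row reduction in the third step cleanly; one must keep careful track of how the commutativity-normalisation interchanges leading positions between the $(i,j)$-side and the $(j,i)$-side of the combined $2m_{i,j}$-element spanning set, and in particular verify that the extremal monomials $a_{i,k}a_{j,k+m_{i,j}}$ (together with their $i=j$ analogues in Group C) really do appear as leading monomials after reduction rather than being consumed by cancellations with the commutativity relations.
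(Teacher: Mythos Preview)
Your outline matches the paper's approach: reduce first by the commutativity relations, then analyse the second group of relations modulo commutativity. The paper carries out precisely the row reduction you flag as the ``main obstacle'' via Lemma~\ref{lem:Gauss} in the Appendix, treating separately the cases $i=j$ with $m_{i,i}$ even, $i=j$ with $m_{i,i}$ odd, and $i\ne j$, and performing explicit row and column operations on the matrices of binomial coefficients to determine the leading positions.

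Two points in your sketch need tightening. First, Step~2 overstates what single relations achieve: the leading term of $r(i,j,K,p)$ is $a_{i,K-p}a_{j,p}$, so for fixed $K$ the differences $l-k=2p-K$ obtained this way all share the parity of $K$; the opposite-parity Group~B/C leading monomials appear only after combining the $(i,j)$ and $(j,i)$ families, so essentially all of the nontrivial content lies in Step~3. Second, the dimension count in Step~4 cannot replace the row reduction. Knowing that the relation space has dimension $D$ and that your list has $D$ entries does not force those entries to be the actual leading terms unless you have already exhibited, for each one, a relation with that leading monomial---and that is exactly the computation you defer. You therefore genuinely need the linear algebra of Lemma~\ref{lem:Gauss} (or an equivalent direct argument identifying which positions survive Gaussian elimination), not merely the count. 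The $i=j$ case also requires its own treatment, since there the two families $r(i,i,K,p)$ and $r(i,i,K,p)$ coincide and the (anti)symmetry under swapping factors halves the ambient space; this is why the paper's Lemma~\ref{lem:Gauss} has three parts rather than one.
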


\begin{proof}
Let us analyse the relations carefully. First of all, the first two groups of relations (supercommutativity) imply that $a_{i,k}a_{j,l}$ with $k>l$ are leading terms, as well as $a_{i,k}a_{j,k}$ with $i>j$, and $a_{i,k}a_{i,k}$, the latter in case of odd $m_{i,i}$. In what follows, we shall only consider monomials that are already normal with respect to these leading terms.

From Lemma \ref{lem:Gauss} (proved in Appendix), it follows that after the appropriate row reduction
\begin{itemize}
\item if $m_{i,i}$ is even, then $a_{i,k}^2$, $a_{i,k} a_{i,k+1}$, \ldots, $a_{i,k}a_{i,k+m_{i,i}-1}$ are the leading terms of the third group of relations,
\item if $m_{i,i}$ is odd, then $a_{i,k} a_{i,k+1}$, \ldots, $a_{i,k}a_{i,k+m_{i,i}-1}$ are the leading terms of the third group of relations,
\item if $i<j$, then 
$a_{i,k}a_{j,k}$, $a_{i,k} a_{j,k+1}$, $a_{i,k} a_{j,k+m_{i,j}}$ as well as $a_{j,k}a_{i,k+1}$, \ldots,  $a_{j,k} a_{i,k+m_{i,j}-1}$,  are the leading terms of the third group of relations,
\end{itemize}

It remains to notice that the union of these sets of leading terms is precisely the set described above.
\end{proof}

We already mentioned above that a generating set $G$ of $I$ is a Gr\"obner basis of $I$ if and only if cosets of monomials that are normal with respect to $G$ form a basis of the quotient modulo $I$. However, if all elements of $G$ are of weight two, there is a much more efficient criterion using the Diamond Lemma \cite[Th.~2.4.1.5]{BD}: a generating set $G$ of weight two is a Gr\"obner basis if and only if cosets of \emph{cubic} (weight three) monomials that are normal with respect to $G$ form a basis of the cubic part of the quotient modulo $I$. We shall now use this criterion in our case. In fact, for the algebra $\mathcal{A}_Q$, the cubic part can be further separated according to the $\mathbb{Z}^{Q_0}$-grading. Let us consider various possible situations.

\begin{lem}
The $q$-character of $\mathcal{A}_{Q,3\alpha_i}$ is equal to the character of the set of monomials  
\[
a_{i,k_1} a_{i,k_2} a_{i,k_3}, \ k_p\ge 0,   
\]
that are normal with respect to the quadratic leading terms of Lemma \ref{lm:quadLT}.
\end{lem}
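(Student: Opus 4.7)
The plan is to compute both sides of the asserted equality in closed form and check that they coincide. For the left-hand side, the character of $\mathcal{A}_{Q,3\alpha_i}$ is already accessible through Corollary \ref{cor:series-dual}: specializing to $\mathbf{d}=3\alpha_i$ gives $\mathbf{d}\cdot\mathbf{d}=9$ and $\chi(\mathbf{d},\mathbf{d})=9(1-m_{i,i})$, so the formula reduces to
\[
P(\mathcal{A}_{Q,3\alpha_i},q)=\frac{(-q^{\frac12})^{9m_{i,i}}}{(q)_3}.
\]

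For the right-hand side, I would first restrict Lemma \ref{lm:quadLT} to pairs of generators at the single vertex $i$. Among the listed leading terms, those of the form $a_{i,k}a_{i,l}$ are exactly the ones with $k>l$ (supercommutativity) together with the ones with $0\le l-k\le m_{i,i}-1$ (the diagonal case $i\ge j=i$ of the third family). Hence a product $a_{i,k}a_{i,l}$ is normal precisely when $l\ge k+m_{i,i}$, and a cubic monomial $a_{i,k_1}a_{i,k_2}a_{i,k_3}$ is normal precisely when $k_2\ge k_1+m_{i,i}$ and $k_3\ge k_2+m_{i,i}$.

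To evaluate the character of this combinatorial set, I would use the change of variables $k_2=k_1+m_{i,i}+l_1$, $k_3=k_2+m_{i,i}+l_2$ with $k_1,l_1,l_2\ge 0$, and factor the resulting sum into three independent geometric series. Recalling that $\deg(a_{i,k})=-2k-m_{i,i}$, the three series produce the factors $(1-q)^{-1}$, $(1-q^2)^{-1}$, $(1-q^3)^{-1}$ coming from $l_2$, $l_1$ and $k_1$ respectively, while the uniform shift contributes $(-q^{\frac12})^{9m_{i,i}}$. This matches the Poincar\'e series computed above.

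I do not anticipate a substantive obstacle: the statement is essentially a bookkeeping identity, combining the closed form of Corollary \ref{cor:series-dual} with an elementary enumeration of the monomials cut out by the inequalities $k_{j+1}\ge k_j+m_{i,i}$. The only point requiring care is the sign convention when $m_{i,i}$ is odd, but since the exponent $9m_{i,i}$ appears on both sides the parities automatically agree.
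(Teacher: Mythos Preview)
Your argument is correct and follows essentially the same route as the paper's proof: compute the character of $\mathcal{A}_{Q,3\alpha_i}$ via the functional realisation (you use Corollary~\ref{cor:series-dual}, the paper uses Proposition~\ref{prop:functional-realisation} directly), then enumerate the normal monomials by the shift $k_{p+1}\ge k_p+m_{i,i}$ and match the resulting $1/(q)_3$ factor. The only cosmetic difference is that you sum three independent geometric series in $(k_1,l_1,l_2)$ while the paper parametrises the same set by a partition with at most three parts.
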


\begin{proof}
From Proposition \ref{prop:functional-realisation}, we know that the graded component $\mathcal{A}_{Q,3\alpha_i}^\vee$ is identified with the vector space of polynomials equal to a product of a symmetric polynomial in $z_1,z_2,z_3$ and the polynomial $((z_1-z_2)(z_2-z_3)(z_1-z_3))^{m_{i,i}}$ shifted by $3m_{i,i}$. The $q$-character of this latter vector space is $\frac{q^{6m_{ii}}}{(q)_3}$. The normal monomials are the monomials $a_{i,k_1} a_{i,k_2} a_{i,k_3}$ with $k_2-k_1\ge m_{i,i}$ and $k_3-k_2\ge m_{i,i}$. We see that
 \[
(k_1,k_2,k_3)=(m_{i,i},2m_{i,i},3m_{i,i})+(l_1,l_2,l_3),
 \] 
where $(l_1,l_2,l_3)$ is a partition into at most three parts, and so the result follows.  
\end{proof}

\begin{lem}\label{lm:diag1}
Suppose that $i<j$. The $q$-character of $\mathcal{A}_{Q,2\alpha_i+\alpha_j}$ is equal to the character of the set of monomials  
\[
a_{i_1,k_1} a_{i_2,k_2} a_{i_3,k_3},\quad  \alpha_{i_1}+\alpha_{i_2}+\alpha_{i_3}=2\alpha_i+\alpha_j\ k_p\ge 0,   
\]
that are normal with respect to the quadratic leading terms of Lemma \ref{lm:quadLT} if and only if $m_{i,i}=m_{i,j}$ or $m_{i,i}=m_{i,j}+1$.
\end{lem}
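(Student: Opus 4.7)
The plan is to compute both characters appearing in the statement and compare them. The character of $\mathcal{A}_{Q, 2\alpha_i + \alpha_j}$ comes directly from Corollary \ref{cor:series-dual}: with $\mathbf{d} = 2\alpha_i + \alpha_j$ one has $\mathbf{d} \cdot \mathbf{d} - \chi(\mathbf{d}, \mathbf{d}) = 4 m_{i,i} + m_{j,j} + 4 m_{i,j}$, giving
$$\mathrm{ch}(\mathcal{A}_{Q, 2\alpha_i + \alpha_j}, q) = \frac{q^{2 m_{i,i} + m_{j,j}/2 + 2 m_{i,j}}}{(1-q)^2 (1-q^2)}.$$

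For the character of the normal monomials, I would partition them according to the order in which the generators appear. Since $\mathbf{d} = 2\alpha_i + \alpha_j$ forces exactly two factors of the form $a_{i,\bullet}$ and one of the form $a_{j,\bullet}$, there are three patterns: $a_{i,k_1}a_{i,k_2}a_{j,l}$, $a_{i,k_1}a_{j,l}a_{i,k_2}$, and $a_{j,l}a_{i,k_1}a_{i,k_2}$. Normality of the two adjacent pairs in each pattern yields two independent inequalities from Lemma \ref{lm:quadLT}: a pair $a_{i,\bullet}a_{i,\bullet}$ must have gap at least $m_{i,i}$, a pair $a_{i,\bullet}a_{j,\bullet}$ (in this order, with $i<j$) must have gap at least $m_{i,j}+1$, and a pair $a_{j,\bullet}a_{i,\bullet}$ must have gap at least $m_{i,j}$. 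Parametrising each inequality by a nonnegative integer and summing the resulting weight $q^{k_1 + k_2 + l + m_{i,i} + m_{j,j}/2}$ over the three patterns yields the total character
$$\frac{q^{m_{j,j}/2}\bigl(q^{3 m_{i,i} + m_{i,j} + 1} + q^{m_{i,i} + 3 m_{i,j} + 2} + q^{2 m_{i,i} + 2 m_{i,j}}\bigr)}{(1-q)(1-q^2)(1-q^3)}.$$

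Finally, clearing denominators and using $(1-q^3)/(1-q) = 1 + q + q^2$, the equality of these two characters reduces, after cancelling $q^{m_{j,j}/2} q^{2 m_{i,i} + 2 m_{i,j}}$ from both sides, to the multiset identity
$$\{3 m_{i,i} + m_{i,j} + 1,\; m_{i,i} + 3 m_{i,j} + 2\} = \{2 m_{i,i} + 2 m_{i,j} + 1,\; 2 m_{i,i} + 2 m_{i,j} + 2\}.$$
There are only two possible matchings, and solving each linear system gives exactly the two cases $m_{i,i} = m_{i,j}$ and $m_{i,i} = m_{i,j} + 1$ respectively, both of which I would verify are actually consistent (not just consistent in one equation of the pair).

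The only step requiring any real care is the asymmetric bookkeeping in Lemma \ref{lm:quadLT}: the pair $a_{i,k} a_{j,l}$ with $i < j$ is normal exactly when $l \ge k + m_{i,j} + 1$, whereas $a_{j,l} a_{i,k}$ is normal when $k \ge l + m_{i,j}$, and this offset by one between the two directions is what produces the three distinct exponents above and ultimately the two admissible solutions. Apart from that, the entire argument is a finite manipulation of geometric series.
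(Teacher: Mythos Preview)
Your argument is correct and follows essentially the same route as the paper: compute the character of $\mathcal{A}_{Q,2\alpha_i+\alpha_j}$ explicitly, enumerate normal cubic monomials by the three possible orderings of the letters, sum the resulting geometric series, and reduce the comparison to a polynomial identity in $q$. The paper obtains the target character from Proposition~\ref{prop:functional-realisation} rather than Corollary~\ref{cor:series-dual} and finishes by factoring the difference as $q^{m_{i,j}+1}(q^{m_{i,j}}-q^{m_{i,i}})(q^{m_{i,j}+1}-q^{m_{i,i}})=0$ instead of your multiset matching, but these are cosmetic variations of the same computation.
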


\begin{proof}
From Proposition \ref{prop:functional-realisation}, we know that $\mathcal{A}_{Q,2\alpha_i+\alpha_j}^\vee$ is identified with the vector space of polynomials equal to a product of a polynomial in $z_1,z_2,z_3$ symmetric in $z_1,z_2$ and the polynomial $(z_1-z_2)^{m_{i,i}}((z_2-z_3)(z_1-z_3))^{m_{i,j}}$ shifted by $2m_{i,i}+m_{j,j}$. The $q$-character of this vector space is $\frac{q^{3m_{i,i}+2m_{i,j}+m_{j,j}}}{(q)_2(q)_1}$. There are three types of normal monomials: 
\begin{itemize}
\item $a_{i,k_1} a_{i,k_2} a_{j,k_3}$ with $k_2-k_1\ge m_{ii}$ and $k_3-k_2\ge m_{ij}+1$, contributing $\frac{q^{3m_{i,i}+m_{i,j}+m_{j,j}+1}}{(q)_3}$ to the character, 
\item $a_{i,k_1} a_{j,k_2} a_{i,k_3}$ with $k_2-k_1\ge m_{ij}+1$ and $k_3-k_2\ge m_{ij}$, contributing $\frac{q^{2m_{i,i}+3m_{i,j}+m_{j,j}+2}}{(q)_3}$ to the character, 
\item $a_{j,k_1} a_{i,k_2} a_{i,k_3}$ with $k_2-k_1\ge m_{ij}$ and $k_3-k_2\ge m_{ii}$, contributing $\frac{q^{3m_{i,i}+2m_{i,j}+m_{j,j}}}{(q)_3}$ to the character. 
\end{itemize}
Dividing by $q^{2m_{i,i}+m_{j,j}}$, we see that it remains to determine when we have the equality 
 \[
\frac{q^{2m_{i,i}+m_{i,j}+1}}{(q)_3}+\frac{q^{3m_{i,j}+2}}{(q)_3}+\frac{q^{2m_{i,j}+m_{i,i}}}{(q)_3}=\frac{q^{m_{i,i}+2m_{ij}}}{(q)_2(q)_1}, 
 \]
or, equivalently, 
 \[
q^{2m_{i,i}+m_{i,j}+1}+q^{3m_{i,j}+2}+q^{2m_{i,j}+m_{i,i}}=q^{m_{i,i}+2m_{i,j}}(1+q+q^2). 
 \]
This latter simplifies to  
 \[
q^{2m_{i,i}+m_{i,j}+1}+q^{3m_{i,j}+2}=q^{m_{i,i}+2m_{i,j}+1}+q^{m_{i,i}+2m_{i,j}+2},  
 \]
or
 \[
q^{m_{i,j}+1}(q^{m_{i,j}}-q^{m_{i,i}})(q^{m_{i,j}+1}-q^{m_{i,i}})=0,
 \]
and the claim follows.
\end{proof}

\begin{lem}\label{lm:diag2}
Suppose that $i>j$. The $q$-character of $\mathcal{A}_{Q,2\alpha_i+\alpha_j}$ is equal to the character of the set of monomials  
\[
a_{i_1,k_1} a_{i_2,k_2} a_{i_3,k_3},\quad  \alpha_{i_1}+\alpha_{i_2}+\alpha_{i_3}=2\alpha_i+\alpha_j\ k_p\ge 0,   
\]
that are normal with respect to the quadratic leading terms of Lemma \ref{lm:quadLT} if and only if $m_{i,i}=m_{i,j}$ or $m_{i,i}=m_{i,j}+1$.
\end{lem}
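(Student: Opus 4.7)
The plan is to proceed in complete parallel with the proof of Lemma~\ref{lm:diag1}. Since the space $\mathcal{A}_{Q, 2\alpha_i + \alpha_j}^\vee$ described by Proposition~\ref{prop:functional-realisation} does not depend on how the vertices of $Q$ are ordered, its $q$-character remains $\frac{q^{3m_{i,i} + 2m_{i,j} + m_{j,j}}}{(q)_2(q)_1}$; what changes in passing from the case $i < j$ to the case $i > j$ is only the list of normal cubic monomials of weight $2\alpha_i + \alpha_j$, as determined by the classification of leading terms in Lemma~\ref{lm:quadLT}.

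Applying Lemma~\ref{lm:quadLT} with $i > j$, I would identify the three types of normal cubic monomials as follows. First, monomials of the form $a_{i,k_1}a_{i,k_2}a_{j,k_3}$ with $k_2 - k_1 \ge m_{i,i}$ and $k_3 - k_2 \ge m_{i,j}$, since the pair $(a_i,a_j)$ now falls under the ``$i \ge j$'' clause of Lemma~\ref{lm:quadLT} rather than the ``$i < j$'' clause. Second, monomials of the form $a_{i,k_1}a_{j,k_2}a_{i,k_3}$ with $k_2 - k_1 \ge m_{i,j}$ and $k_3 - k_2 \ge m_{i,j}+1$, where the two consecutive pairs invoke the two opposite clauses in reversed order compared to Lemma~\ref{lm:diag1}. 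Third, monomials of the form $a_{j,k_1}a_{i,k_2}a_{i,k_3}$ with $k_2 - k_1 \ge m_{i,j}+1$ and $k_3 - k_2 \ge m_{i,i}$. In all three cases, the constraints are obtained from those in Lemma~\ref{lm:diag1} by a transposition that reflects the reversal of the ordering between $i$ and $j$: each ``$m_{i,j}$'' becomes ``$m_{i,j}+1$'' or vice versa in exactly the positions where the relative order of consecutive generators changes.

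The three character contributions are then computed by the standard parametrization via three nonnegative gap variables, producing summands of the form $\frac{q^\alpha}{(q)_3}$ with exponents shifted by $\pm 1$ or swapped relative to those in Lemma~\ref{lm:diag1}. Setting the sum equal to $\frac{q^{3m_{i,i}+2m_{i,j}+m_{j,j}}}{(q)_2(q)_1}$, using $\frac{(q)_3}{(q)_2(q)_1} = 1 + q + q^2$, and cancelling a common power of $q$, the question reduces to a polynomial identity in $q^{m_{i,i}}$ and $q^{m_{i,j}}$. Mirroring the factorization at the end of the proof of Lemma~\ref{lm:diag1}, this identity can be written in the form $q^{c}(q^{m_{i,j}} - q^{m_{i,i}})(q^{m_{i,j}+1} - q^{m_{i,i}}) = 0$ for some constant $c$, so that equality holds if and only if $m_{i,i} = m_{i,j}$ or $m_{i,i} = m_{i,j}+1$, yielding the claim. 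The only routine obstacle is keeping careful track of how the three shifted exponents combine, but the final factorization coincides with that of Lemma~\ref{lm:diag1} and produces the same condition on $m_{i,i}$ and $m_{i,j}$, which is exactly what one expects from the symmetric role the two vertices play in the almost $N$-regularity condition appearing in Theorem~\ref{Nregular}.
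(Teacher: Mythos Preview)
Your proposal is correct and follows essentially the same approach as the paper: the three types of normal monomials and their constraints are identified exactly as in the paper, and the final factorization $q^{c}(q^{m_{i,j}}-q^{m_{i,i}})(q^{m_{i,j}+1}-q^{m_{i,i}})=0$ (with $c=m_{i,j}$ in the paper) is the same. The only difference is that the paper writes out the explicit character contributions $\frac{q^{3m_{i,i}+m_{i,j}+m_{j,j}}}{(q)_3}$, $\frac{q^{2m_{i,i}+3m_{i,j}+m_{j,j}+1}}{(q)_3}$, $\frac{q^{3m_{i,i}+2m_{i,j}+m_{j,j}+2}}{(q)_3}$ and the intermediate simplifications, which you leave implicit.
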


\begin{proof}
From Proposition \ref{prop:functional-realisation}, we know that $\mathcal{A}_{Q,2\alpha_i+\alpha_j}^\vee$ is identified with the vector space of polynomials equal to a product of a polynomial in $z_1,z_2,z_3$ symmetric in $z_1,z_2$ and the polynomial $(z_1-z_2)^{m_{i,i}}((z_2-z_3)(z_1-z_3))^{m_{i,j}}$ shifted by $2m_{i,i}+m_{j,j}$. The $q$-character of this vector space is $\frac{q^{3m_{i,i}+2m_{i,j}+m_{j,j}}}{(q)_2(q)_1}$. There are three types of normal monomials: 
\begin{itemize}
\item $a_{i,k_1} a_{i,k_2} a_{j,k_3}$ with $k_2-k_1\ge m_{i,i}$ and $k_3-k_2\ge m_{i,j}$, contributing $\frac{q^{3m_{i,i}+m_{i,j}+m_{j,j}}}{(q)_3}$ to the character, 
\item $a_{i,k_1} a_{j,k_2} a_{i,k_3}$ with $k_2-k_1\ge m_{i,j}$ and $k_3-k_2\ge m_{i,j}+1$, contributing $\frac{q^{2m_{i,i}+3m_{i,j}+m_{j,j}+1}}{(q)_3}$ to the character, 
\item $a_{j,k_1} a_{i,k_2} a_{i,k_3}$ with $k_2-k_1\ge m_{i,j}+1$ and $k_3-k_2\ge m_{i,i}$, contributing $\frac{q^{3m_{i,i}+2m_{i,j}+m_{j,j}+2}}{(q)_3}$ to the character. 
\end{itemize}
Dividing by $q^{2m_{i,i}+m_{j,j}}$, we see that it remains to determine when we have the equality 
 \[
\frac{q^{2m_{i,i}+m_{i,j}}}{(q)_3}+\frac{q^{3m_{i,j}+1}}{(q)_3}+\frac{q^{2m_{i,j}+m_{i,i}+2}}{(q)_3}=\frac{q^{m_{i,i}+2m_{i,j}}}{(q)_2(q)_1}, 
 \]
or, equivalently, 
 \[
q^{2m_{i,i}+m_{i,j}}+q^{3m_{i,j}+1}+q^{2m_{i,j}+m_{i,i}+2}=q^{m_{i,i}+2m_{i,j}}(1+q+q^2). 
 \]
This latter simplifies to  
 \[
q^{2m_{i,i}+m_{i,j}}+q^{3m_{i,j}+1}=q^{m_{i,i}+2m_{i,j}}+q^{m_{i,i}+2m_{i,j}+1},  
 \]
or
 \[
q^{m_{i,j}}(q^{m_{i,j}}-q^{m_{i,i}})(q^{m_{i,j}+1}-q^{m_{i,i}})=0,
 \]
and the claim follows.
\end{proof}

\begin{lem}\label{lm:off-diag}
Suppose that $i<j<k$. The $q$-character of $\mathcal{A}_{Q,\alpha_i+\alpha_j+\alpha_k}$ is equal to the character of the set of monomials  
\[
a_{i_1,k_1} a_{i_2,k_2} a_{i_3,k_3},\quad  \alpha_{i_1}+\alpha_{i_2}+\alpha_{i_3}=\alpha_i+\alpha_j+\alpha_k,\ k_p\ge 0,   
\]
that are normal with respect to the quadratic leading terms of Lemma \ref{lm:quadLT} if and only if $m_{i,j}=m_{j,k}=m_{i,k}$.
\end{lem}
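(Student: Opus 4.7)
Following the template of Lemmas~\ref{lm:diag1} and~\ref{lm:diag2}, the plan is to set $\alpha=m_{i,j}$, $\beta=m_{i,k}$, $\gamma=m_{j,k}$ and, as a first step, to apply Proposition~\ref{prop:functional-realisation} to $\mathbf{d}=\alpha_i+\alpha_j+\alpha_k$. Since every coordinate of $\mathbf{d}$ equals $1$, the stabiliser $\Sigma_{\mathbf{d}}$ is trivial, so $\mathcal{A}_{Q,\mathbf{d}}^\vee$ is freely generated over $\mathbb{Q}[z_1,z_2,z_3]$ by $F_{\mathbf{d}}=(z_1-z_2)^{\alpha}(z_1-z_3)^{\beta}(z_2-z_3)^{\gamma}$, with a homological shift by $\mathbf{m}\cdot\mathbf{d}=m_{i,i}+m_{j,j}+m_{k,k}$. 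Reading off the $q$-character directly gives an expression of the form $\tfrac{q^{E}}{(q)_1^3}$, where $E$ records the degree of $F_{\mathbf{d}}$ together with the diagonal shift.

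The next step is to enumerate the six orderings $(p_1,p_2,p_3)$ of $(i,j,k)$ and to use Lemma~\ref{lm:quadLT} to determine the gap constraints that make the monomial $a_{p_1,k_1}a_{p_2,k_2}a_{p_3,k_3}$ normal. A direct computation of the minimum value of $k_1+k_2+k_3$ in each case yields
\begin{align*}
&K_1=2\alpha+\gamma+3, &&K_2=2\beta+\gamma+2, &&K_3=2\alpha+\beta+1,\\
&K_4=\beta+2\gamma+2, &&K_5=\alpha+2\beta+1, &&K_6=\alpha+2\gamma,
\end{align*}
so that the character of the set of normal monomials equals $\tfrac{\sum_{s=1}^{6}q^{K_s}}{(q)_3}$ with the same diagonal prefactor. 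Using $(q)_3=(q)_1^3(1+q)(1+q+q^2)$, the comparison of characters reduces to the polynomial identity
\[
\sum_{s=1}^{6}q^{K_s}=q^{\alpha+\beta+\gamma}(1+q)(1+q+q^2).
\]
The ``if'' direction is then immediate: substituting $\alpha=\beta=\gamma=m$ makes the six $K_s$ equal to $3m,3m+1,3m+1,3m+2,3m+2,3m+3$, matching the expansion of the right-hand side term by term.

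The main obstacle is the converse implication. My plan is to exploit the structural observation that, writing $N=\alpha+\beta+\gamma$, the six values pair up as $K_1+K_2=2N+5$, $K_3+K_4=2N+3$, and $K_5+K_6=2N+1$ independently of the individual parameters, whereas the multiset $\{0,1,1,2,2,3\}$ of exponents on the right-hand side admits a unique partition into unordered pairs with these three prescribed sums, namely $\{2,3\}$, $\{1,2\}$, and $\{0,1\}$. Decoding each pair equality in terms of the differences $\alpha-\beta$, $\alpha-\gamma$, $\beta-\gamma$ then yields three binary alternatives ($\alpha=\beta$ or $\beta=\alpha+1$; $\alpha=\gamma$ or $\alpha=\gamma+1$; $\beta=\gamma$ or $\gamma=\beta+1$). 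A finite $2\times 2\times 2$ enumeration shows that seven of the eight combinations produce contradictory linear constraints on $\alpha,\beta,\gamma$, leaving only $\alpha=\beta=\gamma$. The delicate aspect of this argument is to present the final bookkeeping cleanly rather than as an unappealing case-by-case list; a more conceptual alternative would be to identify directly which $K_s$ can achieve the extremal values $N$ and $N+3$ of the right-hand side and to force the three equalities from there.
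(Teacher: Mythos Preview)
Your proposal is correct and follows essentially the same route as the paper: compute both characters via Proposition~\ref{prop:functional-realisation} and Lemma~\ref{lm:quadLT}, reduce to the polynomial identity $\sum_s q^{K_s}=q^{N}(1+q)(1+q+q^2)$, and then exploit the fact that the six exponents pair up with fixed sums $2N+5$, $2N+3$, $2N+1$. The paper carries out the last step with the substitution $a=\alpha-\beta$, $b=\beta-\gamma$, observing directly that $\{-b,b+1\}=\{0,1\}$ and $\{a+3,2-a\}=\{2,3\}$ force $a,b\in\{-1,0\}$, and then the remaining pair $\{a+b+1,2-a-b\}=\{1,2\}$ forces $a+b\in\{0,1\}$, leaving only $a=b=0$; this is exactly your ``more conceptual alternative'' and avoids the $2\times2\times2$ enumeration, but the content is the same.
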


\begin{proof}
From Proposition \ref{prop:functional-realisation}, we know that $\mathcal{A}_{Q,\alpha_i+\alpha_j+\alpha_k}^\vee$ is identified with the vector space of polynomials equal to a product of a polynomial in $z_1,z_2,z_3$ and the polynomial $(z_1-z_2)^{m_{i,j}}(z_2-z_3)^{m_{j,k}}(z_1-z_3)^{m_{i,k}}$ shifted by $m_{i,i}+m_{j,j}+m_{k,k}$. The $q$-character of this vector space is $\frac{q^{m_{i,i}+m_{j,j}+m_{k,k}+m_{i,j}+m_{j,k}+m_{i,k}}}{(q)_1^3}$. There are six types of normal monomials: 
\begin{itemize}
\item $a_{i,k_1} a_{j,k_2} a_{k,k_3}$ with $k_2-k_1\ge m_{i,j}+1$ and $k_3-k_2\ge m_{j,k}+1$, contributing $\frac{q^{m_{i,i}+m_{j,j}+m_{k,k}+2m_{i,j}+m_{j,k}+3}}{(q)_3}$ to the character, 
\item $a_{i,k_1} a_{k,k_2} a_{j,k_3}$ with $k_2-k_1\ge m_{i,k}+1$ and $k_3-k_2\ge m_{j,k}$, contributing $\frac{q^{m_{i,i}+m_{j,j}+m_{k,k}+2m_{i,k}+m_{j,k}+2}}{(q)_3}$ to the character, 
\item $a_{j,k_1} a_{i,k_2} a_{k,k_3}$ with $k_2-k_1\ge m_{i,j}$ and $k_3-k_2\ge m_{i,k}+1$, contributing $\frac{q^{m_{i,i}+m_{j,j}+m_{k,k}+2m_{i,j}+m_{i,k}+1}}{(q)_3}$ to the character, 
\item $a_{j,k_1} a_{k,k_2} a_{i,k_3}$ with $k_2-k_1\ge m_{j,k}+1$ and $k_3-k_2\ge m_{i,k}$, contributing $\frac{q^{m_{i,i}+m_{j,j}+m_{k,k}+2m_{j,k}+m_{i,k}+2}}{(q)_3}$ to the character, 
\item $a_{k,k_1} a_{i,k_2} a_{j,k_3}$ with $k_2-k_1\ge m_{i,k}$ and $k_3-k_2\ge m_{i,j}+1$, contributing $\frac{q^{m_{i,i}+m_{j,j}+m_{k,k}+2m_{i,k}+m_{i,j}+1}}{(q)_3}$ to the character, 
\item $a_{k,k_1} a_{j,k_2} a_{i,k_3}$ with $k_2-k_1\ge m_{j,k}$ and $k_3-k_2\ge m_{i,j}$, contributing $\frac{q^{m_{i,i}+m_{j,j}+m_{k,k}+2m_{j,k}+m_{i,j}}}{(q)_3}$ to the character. 
\end{itemize}
Dividing by $q^{m_{i,i}+m_{j,j}+m_{k,k}}$, we see that it remains to determine when these contributions add up to $\frac{q^{m_{i,j}+m_{j,k}+m_{i,k}}}{(q)_1^3}$, or, in other words, when 
\begin{multline*}
q^{2m_{i,j}+m_{j,k}+3}+q^{2m_{i,k}+m_{j,k}+2}+q^{2m_{i,j}+m_{i,k}+1}+q^{2m_{j,k}+m_{i,k}+2}+\\
q^{2m_{i,k}+m_{i,j}+1}+q^{2m_{j,k}+m_{i,j}}=q^{m_{i,j}+m_{j,k}+m_{i,k}}(1+q)(1+q+q^2).
\end{multline*}
Dividing by $q^{m_{i,j}+m_{j,k}+m_{i,k}}$, one obtains an equivalent condition
 \[
q^{m_{i,j}-m_{i,k}+3}+q^{m_{i,k}-m_{i,j}+2}+q^{m_{i,j}-m_{j,k}+1}+q^{m_{j,k}-m_{i,j}+2}+
q^{m_{i,k}-m_{j,k}+1}+q^{m_{j,k}-m_{i,k}}=(1+q)(1+q+q^2). 
 \]
For the rest of the proof, let us denote $a:=m_{i,j}-m_{i,k}$, $b:=m_{i,k}-m_{j,k}$, so that our condition becomes 
 \[
q^{a+3}+q^{2-a}+q^{a+b+1}+q^{2-a-b}+q^{b+1}+q^{-b}=1+2q+2q^2+q^3.
 \]
Since $q$ is a formal variable, each term on the left must match a term on the right. Let us note that on the left we have both the exponent $-b$ and the exponent $b+1$, which add up to one. Examining the exponents on the right, we conclude that one of them is equal to zero and the other is equal to one. Also, we have the exponent $a+3$ and the exponent $2-a$ which add up to $5$, and examining the exponents on the right, we conclude that one of them is equal to $2$ and the other is equal to $3$. These two observations imply that $a,b\in\{-1,0\}$, and that $q^{a+b+1}+q^{2-a-b}=q+q^2$, so $a+b+1=1$ or $a+b+1=2$, or, in other words, $a+b\in\{0,1\}$, which, given that $a,b\in\{-1,0\}$, implies $a=b=0$, and the claim follows.   
\end{proof}

\noindent \textit{Proof of Theorem \ref{Nregular}}.
The cases we considered exhaust all possible $\mathbb{Z}^{Q_0}$-gradings of the cubic part, so the claim on the quadratic Gr\"obner basis follows: Lemma \ref{lm:off-diag} ensures that all off-diagonal elements are pairwise equal, and Lemmas \ref{lm:diag1} and \ref{lm:diag2} ensure that the diagonal elements are either equal to the corresponding off-diagonal ones or exceed them by one, so the conjunction of these (which constitutes the condition of our Gr\"obner basis criterion) singles out precisely the almost $N$-regular quivers. In conjunction with Proposition \ref{prop:KoszulGB}, this result implies that for each almost $N$-regular quiver $Q$, the algebra $\mathcal{A}_Q$ is Koszul. \qed
\medskip

The almost $N$-regularity condition interpolates between two cases each of which is meaningful in its own right. The condition $m_{ij}=N$ for all $i,j\in Q_0$ is reasonable from the quiver viewpoint: it says that the quiver is \emph{regular}, meaning that there is the same number of arrows between any two (possibly coinciding) vertices. The condition $m_{ij}=N$ for all $i\ne j\in Q_0$, $m_{ii}=N+1$ for all $i\in Q_0$ is reasonable from the Koszul dual viewpoint: in this case, the Koszul dual algebra turns out to be the free vertex algebra on $|Q_0|$ generators with the constant locality function $N(i,j)\equiv N$, see~\cite{Ro}. In fact, one can use results of \cite{Ro} (appropriately modified to eliminate certain misprints) to establish existence of a quadratic Gr\"obner basis for the ideal of relations of the Koszul dual algebra; a different approach to studying the same algebra which also leads to a quadratic Gr\"obner basis is developed in~\cite{Do}.

\begin{rem}
Note that the ordering that we consider is ``global'', forcing the quiver to be completely regular. In fact, it is easy to modify it slightly to handle connected components of $Q$ separately, and for such orderings the same argument proves that the algebra $\mathcal{A}_Q$ has a quadratic Gr\"obner basis if and only if each connected component of $Q$ is $N$-regular for some $N>0$. 
\end{rem}

\subsection{Koszulness beyond quadratic Gr\"obner bases}

In this section, we give example of a quiver for which the algebra $\mathcal{A}_Q$ does not have a quadratic Gr\"obner basis of relations for any admissible ordering but is nevertheless Koszul. We begin with a following proposition. 

\begin{prop}
Suppose that $Q$ is a quiver which has two vertices $i\ne j$ with exactly one arrow $i\to j$ and no loops at either $i$ or $j$. There exists no admissible ordering of monomials in the generators for which the algebra $\mathcal{A}_Q$ has a quadratic Gr\"obner basis of relations. 
\end{prop}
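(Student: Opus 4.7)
The plan is to argue by contradiction, deriving two mutually incompatible choices for the leading monomial of the single relation $R_1 = a_{i,0}a_{j,1} + a_{i,1}a_{j,0}$ by analysing two bidegrees related by the interchange $i \leftrightarrow j$. Suppose that some admissible ordering of monomials yields a quadratic Gr\"obner basis of the defining ideal of $\mathcal{A}_Q$. Since the relations of $\mathcal{A}_Q$ are $\mathbb{Z}^{Q_0}$-homogeneous and no cancellation between different vertex components is possible, the analysis of any bidegree supported on $\{i,j\}$ depends only on the data $(m_{i,i},m_{j,j},m_{i,j})=(0,0,1)$, and the only quadratic relations involving only $a_{i,*}$ and $a_{j,*}$ are commutativity together with $R_k := \sum_{s+t=k} a_{i,s}a_{j,t} = 0$ for $k\ge 0$ (the analogous relations among $a_{i,*}$ alone, or $a_{j,*}$ alone, are vacuous because $m_{i,i}=m_{j,j}=0$). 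Since commutativity is quadratic it is automatically part of the Gr\"obner basis, and I may pass to the commutative polynomial algebra $S=\mathbb{Q}[a_{i,k},a_{j,k}: k\ge 0]$ and study the corresponding commutative quadratic Gr\"obner basis of the ideal $I$ generated by $\{R_k\}_{k\ge 0}$.

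A Hilbert-series computation using Corollary \ref{cor:series-dual} gives the Poincar\'e series in bidegree $d_i\alpha_i+d_j\alpha_j$ (with zeroes at all other vertices) as $q^{d_id_j}/((q)_{d_i}(q)_{d_j})$, hence equal to $\frac{q^2}{(1-q)^2(1-q^2)}$ in both bidegrees $2\alpha_i+\alpha_j$ and $\alpha_i+2\alpha_j$, a series with vanishing coefficients in internal degrees $0$ and $1$. The same arithmetic applied in bidegrees $(\alpha_i+\alpha_j,k)$, $(2\alpha_i,k)$ and $(2\alpha_j,k)$ shows that the quadratic part of $I$ is one-dimensional in each bidegree $(\alpha_i+\alpha_j,k)$ (spanned by $R_k$) and zero in the other two, so that the quadratic part of $\mathrm{in}(I)$ has, restricted to these bidegrees, exactly one generator per internal degree, of the form $a_{i,p_k}a_{j,q_k}$ with $p_k+q_k=k$.

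For the contradiction, I look at bidegree $(2\alpha_i+\alpha_j,1)$: the ring $S$ contains exactly the two commutative monomials $a_{i,0}^2a_{j,1}$ and $a_{i,0}a_{i,1}a_{j,0}$, and both must lie in $\mathrm{in}(I)$ because the corresponding graded piece of $\mathcal{A}_Q$ vanishes. The second is divisible by $a_{i,0}a_{j,0}$, which is the leading monomial of $R_0$. The first has only the two quadratic divisors $a_{i,0}^2$ (not in $\mathrm{in}(I)$, since $a_{i,0}^2\ne 0$ in $\mathcal{A}_Q$ by the Hilbert-series computation in bidegree $(2\alpha_i,0)$) and $a_{i,0}a_{j,1}$; hence $a_{i,0}a_{j,1}$ must be the leading monomial of $R_1$, forcing $(p_1,q_1)=(0,1)$. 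Running the identical argument on the monomial $a_{i,1}a_{j,0}^2$ in bidegree $(\alpha_i+2\alpha_j,1)$ forces $(p_1,q_1)=(1,0)$, contradicting the previous conclusion.

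I expect the main subtlety to be the careful reduction from the non-commutative Gr\"obner-basis statement of Proposition \ref{prop:KoszulGB} to the commutative analysis in $S$: one must verify that the commutativity relations among all generators are automatically part of any quadratic Gr\"obner basis for the chosen admissible ordering, and that normal non-commutative monomials in the bidegrees of interest correspond bijectively to commutative monomials in $S$ that are not divisible by a quadratic leading monomial of one of the $R_k$'s. Once this translation is in place the two-monomial bookkeeping is straightforward and the Hilbert series enforces the claimed rigidity.
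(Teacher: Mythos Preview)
Your core argument is the same as the paper's: both identify the cubic monomials $a_{i,0}^2a_{j,1}$ and $a_{i,1}a_{j,0}^2$ as elements of the ideal whose only quadratic divisors, apart from a square of a generator, are $a_{i,0}a_{j,1}$ and $a_{i,1}a_{j,0}$ respectively; since the squares cannot be leading terms, both $a_{i,0}a_{j,1}$ and $a_{i,1}a_{j,0}$ must be, and this is impossible because only one quadratic relation $R_1$ has either of them in its support modulo commutativity.

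The point you flagged --- the passage to the commutative polynomial ring $S$ --- is precisely where your write-up is incomplete, and it is genuinely delicate: a noncommutative admissible ordering on words need not descend to a term order on commutative monomials, so the implication ``noncommutative quadratic Gr\"obner basis $\Rightarrow$ commutative quadratic Gr\"obner basis'' is not automatic. The paper avoids this entirely by working directly in the tensor algebra: $a_{i,0}^2a_{j,1}$ and $a_{i,1}a_{j,0}^2$ are \emph{noncommutative} words lying in the ideal, hence each must contain a quadratic leading term as a subword; since no relation in bidegree $2\alpha_i$ or $2\alpha_j$ involves a square (here $m_{i,i}=m_{j,j}=0$), the subwords $a_{i,0}^2$ and $a_{j,0}^2$ are excluded, and one concludes exactly as you do. To finish cleanly in the noncommutative setting one should also look at $a_{j,1}a_{i,0}^2$ and $a_{j,0}^2a_{i,1}$, forcing all four quadratic monomials in bidegree $(\alpha_i+\alpha_j,-2)$ to be leading terms, while the relation space there (two commutativity relations plus $R_1$) is only three-dimensional; the paper's final sentence is terse on this count. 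Your use of the Hilbert series via Corollary~\ref{cor:series-dual} is correct but heavier than needed: the vanishing of the relevant cubic pieces follows by a one-line manipulation of the two relations $R_0$ and $R_1$, which is how the paper proceeds.
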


\begin{proof}
Let us consider the following defining relations of $\mathcal{A}_Q$:
\begin{gather*}
a_{i,0}a_{j,0}=0,\\
a_{i,0}a_{j,1}+a_{i,1}a_{j,0}=0 .
\end{gather*}
We remark that they are the only relations of the respective gradings. They immediately imply $a_{i,0}^2a_{j,1}=a_{i,1}a_{j,0}^2=0$. These elements of the ideal of relations of $\mathcal{A}_Q$ are monomials, and thus they must be divisible by the leading terms of some elements of the reduced Gr\"obner basis, no matter which admissible ordering we choose. Suppose that the algebra $\mathcal{A}_Q$ has a quadratic Gr\"obner basis. Since the relations of our algebra do not contain squares of variables, the same is true for the reduced Gr\"obner basis. Therefore $a_{i,0}a_{j,1}$ and $a_{i,1}a_{j,0}$ must both be leading terms of some elements of the reduced Gr\"obner basis, but there is just one relation $a_{i,0}a_{j,1}+a_{i,1}a_{j,0}$ of the corresponding degree, so we arrived at a contradiction. 
\end{proof}

\begin{prop}\label{prop:reg}
Let $Q$ be a quiver with two vertices $0$ and $1$ and exactly two arrows: one from $0$ to $1$ and the other from $1$ to $0$. The algebra $\mathcal{A}_Q$ is Koszul. 
\end{prop}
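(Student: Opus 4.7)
The plan is to recognise $\mathcal{A}_Q$ as the quotient of a polynomial ring by a quadratic regular sequence and then invoke Proposition \ref{prop:KoszulRS} (applied to finite truncations, one multigraded piece at a time).

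Since $m_{0,0}=m_{1,1}=0$, all generators $a_{i,k}$ have even homological degree, so $\mathcal{A}_Q$ is an ordinary commutative algebra: concretely $\mathcal{A}_Q=R/I$, with $R=\mathbb{Q}[a_{0,k},a_{1,k}:k\ge 0]$ and $I$ generated by the coefficients
\[
f_k = \sum_{i+j=k} a_{0,i}\,a_{1,j},\qquad k\ge 0,
\]
of the formal product $a_0(z)\,a_1(z)$.

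The main step is to verify that $\{f_k\}_{k\ge 0}$ is a regular sequence. Since $f_0,\ldots,f_N$ all lie in the Cohen--Macaulay polynomial ring $R_N=\mathbb{Q}[a_{0,j},a_{1,j}:0\le j\le N]$, it suffices to show that the closed subscheme $V(f_0,\ldots,f_N)\subset\mathbb{A}^{2(N+1)}$ has height $N+1$ for every $N$. Geometrically, this subscheme parametrises pairs $(a_0(z),a_1(z))$ of polynomials of degree $\le N$ with $a_0(z)a_1(z)\equiv 0\pmod{z^{N+1}}$; letting $p,q$ denote the orders of vanishing of $a_0,a_1$ at $z=0$, the condition is equivalent to $p+q\ge N+1$, and a direct analysis gives
\[
V(f_0,\ldots,f_N) = \bigcup_{\substack{p,q\ge 0 \\ p+q=N+1}}\{a_{0,0}=\cdots=a_{0,p-1}=0,\ a_{1,0}=\cdots=a_{1,q-1}=0\},
\]
a union of $N+2$ coordinate subspaces each of dimension $N+1$. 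Thus the variety has pure codimension $N+1$, which in the Cohen--Macaulay ring $R_N$ forces $f_0,\ldots,f_N$ to form a regular sequence.

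The remaining step is formal and duplicates the argument of Proposition \ref{prop:KoszulRS}. The Koszul complex $R\otimes\Lambda(\bar e_k:k\ge 0)$ with differential $d(\bar e_k)=f_k$ decomposes by the $\mathbb{Z}_{\ge 0}^{Q_0}\times\mathbb{Z}$-grading, and only finitely many $\bar e_k$ contribute to each multigraded piece; the regularity just proven therefore implies acyclicity in positive homological degree in each piece, so the Koszul complex is a linear $R$-resolution of $\mathcal{A}_Q$. Tensoring with the Koszul resolution of $\mathbb{Q}$ over the polynomial ring $R$ yields a linear resolution of $\mathbb{Q}$ over $\mathcal{A}_Q$, proving that $\mathcal{A}_Q$ is Koszul. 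The main obstacle is the geometric description of $V(f_0,\ldots,f_N)$ and the associated dimension count; once that is in place, the passage from regularity to Koszulness is essentially a repetition of the proof of Proposition \ref{prop:KoszulRS} carried out piece by piece.
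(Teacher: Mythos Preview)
Your proof is correct and reaches the same conclusion as the paper---that the relations $f_0,\ldots,f_N$ form a regular sequence, so Proposition~\ref{prop:KoszulRS} applies (after the routine reduction to finite truncations, which both you and the paper handle)---but the regularity argument itself is genuinely different.

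The paper proves regularity by an inductive algebraic argument: it sets up the short exact sequence
\[
0\to a_{0,0}R/a_{0,0}a_{1,0}R\to R/a_{0,0}a_{1,0}R\to R/a_{0,0}R\to 0,
\]
observes that on each end the remaining $f_k$ reduce to a sequence of the same shape with one fewer relation (after specialising $a_{1,0}=0$ or $a_{0,0}=0$), and concludes by induction on the length of the sequence via the Koszul-complex criterion. Your argument is geometric instead: you identify $V(f_0,\ldots,f_N)$ with the locus $\{\mathrm{ord}(a_0)+\mathrm{ord}(a_1)\ge N+1\}$, decompose it explicitly into $N+2$ coordinate subspaces of codimension $N+1$, and invoke the Cohen--Macaulay property of the polynomial ring $R_N$ to conclude that $N+1$ equations cutting out a height-$(N+1)$ ideal must be a regular sequence. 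Your approach is more conceptual and yields the irreducible components of the zero scheme for free; the paper's approach is more elementary in that it avoids appealing to Cohen--Macaulayness. Either route leads to the same invocation of Proposition~\ref{prop:KoszulRS}, and your remark that the multigrading lets one treat only finitely many $\bar e_k$ at a time plays the same role as the paper's passage to the truncated algebras $\mathcal{A}_Q^{(p)}$.
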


\begin{proof}
In this case, all generators commute, and the relations of the third type are the relations $a_0(z)a_1(z)=0$. Let us consider the algebra $\mathcal{A}_Q^{(p)}$ which has relations $a_0(z)a_1(z)=0$, but we truncate the ring of power series at $z^{p+1}$, so that there are only finitely many relations. We remark that the algebra $\mathcal{A}_Q$ is Koszul if and only if the algebras $\mathcal{A}_Q^{(p)}$ are Koszul for all $p$. Indeed, not only the algebra $\mathcal{A}_Q$ has its additional homological grading of $\mathcal{A}_Q$, but that same grading also exists on the minimal resolution of the ground field, and computing every graded part of the resolution involves dealing with finitely many of the generators and finitely many of the relations between them.  We shall demonstrate that the left-hand sides of the defining relations 
\begin{gather*}
a_{0,0}a_{1,0}=0,\\
a_{0,0}a_{1,1}+a_{0,1}a_{1,0}=0,\\
\ldots\\
a_{0,0}a_{1,p}+\cdots+a_{0,p}a_{1,0}=0
\end{gather*}
of the algebra $\mathcal{A}_Q^{(p)}$ form a regular sequence. 

Let us denote by $R$ the ring of polynomials in the variables $a_{0,k}$, $a_{1,k}$, and consider the obvious short exact sequence 
 \[
0\to a_{0,0}R/a_{0,0}a_{1,0}R\to R/a_{0,0}a_{1,0}R\to R/a_{0,0}R\to 0
 \]
of $R$-modules. Our goal is to show that the elements
\begin{gather*}
a_{0,0}a_{1,1}+a_{0,1}a_{1,0}=0,\\
\ldots\\
a_{0,0}a_{1,p}+\cdots+a_{0,p}a_{1,0}=0
\end{gather*}
form an $M$-regular sequence for the $R$-module $M= R/a_{0,0}a_{1,0}R$. Using the Koszul complex criterion for regular sequences \cite[Th.~16.5]{Ma}, it is easy to see that it is sufficient to prove the same for the modules $a_{0,0}R/a_{0,0}a_{1,0}R$ and $R/a_{0,0}R$. The action of $R$ on the first of these modules factors through the action of $R/a_{1,0}R$, and $a_{0,0}R/a_{0,0}a_{1,0}R$ is easily seen to be a free $R/a_{1,0}R$-module of rank one. The action of $R$ on the second module factors through the action of $R/a_{0,0}R$, and that module is manifestly a free $R/a_{0,0}R$-module of rank one. Setting $a_{0,0}=0$ in the $j$-th relation $a_{0,0}a_{1,j}+\cdots+a_{0,j}a_{1,0}=0$ produces 
the $(j-1)$-st relation of the same shape using the variables $\{a_{0,1+k}\mid k\ge 0\}$ instead of  $\{a_{0,k}\mid k\ge 0\}$. This allows us to conclude that the regularity of our sequence follows by induction on $p$.
\end{proof}

\begin{prop}
Let $Q$ be a connected symmetric quiver. The minimal set of relations of the algebra $\mathcal{A}_Q$, considered as a commutative algebra with generators $a_{i,k}$, forms a regular sequence if and only if $Q$ is one of the following quivers:
\begin{itemize}
\item the quiver with one vertex and no loops,
\item the quiver with one vertex and one loop,
\item the quiver with two vertices $0$ and $1$ and exactly two arrows: one from $0$ to $1$ and the other from $1$ to $0$. 
\end{itemize}
\end{prop}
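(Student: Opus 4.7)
\textit{Sufficiency.} For the quiver with one vertex and no loops, $m_{i,i}=0$, so the free commutative algebra on $V_Q$ is $\mathbb{Q}[a_0,a_1,\dots]$ and no second-type relations are present; the minimal set of relations is empty and vacuously a regular sequence. For the quiver with one vertex and one loop, $m_{i,i}=1$, so the $a_k$'s are odd in $\mathrm{Vect}^{L\times\mathbb{Z}}$ and the free commutative algebra is the exterior algebra $\Lambda(a_0,a_1,\dots)$; each second-type relation $\sum_{k_1+k_2=k}a_{k_1}a_{k_2}=0$ is then automatic because off-diagonal pairs cancel by anticommutativity and each diagonal square $a_{k/2}^2$ vanishes, so the minimal set is again empty. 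For the two-vertex quiver with one arrow in each direction the claim is exactly Proposition~\ref{prop:reg}.

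\textit{Necessity.} For every other connected symmetric quiver $Q$, the plan is to exhibit a pair of minimal relations carrying a non-Koszul syzygy, forcing the sequence not to be regular. I would use the following (possibly overlapping) case distinction: (A) some $m_{i,i}\ge 2$; (B) some $m_{i,j}\ge 2$ with $i\ne j$; (C) $|Q_0|\ge 3$; (D) $|Q_0|=2$, $m_{i,j}=1$, and at least one loop. In case (A) with $m_{i,i}$ even, the minimal relations $a_{i,0}^2$ and $2a_{i,0}a_{i,1}$ already fail regularity because $a_{i,0}\cdot(2a_{i,0}a_{i,1})\in(a_{i,0}^2)$ while $a_{i,0}\notin(a_{i,0}^2)$; for $m_{i,i}$ odd the generators are odd and the first genuine relation $a_{i,0}a_{i,1}=0$ (coming from $p=1,\ k=1$) has $a_{i,0}$ in its kernel via $a_{i,0}^2=0$. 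In case (B) the relations $a_{i,0}a_{j,0}$ and $a_{i,0}a_{j,1}$ satisfy $a_{j,0}\cdot a_{i,0}a_{j,1}\in(a_{i,0}a_{j,0})$ while $a_{j,0}\notin(a_{i,0}a_{j,0})$. In case (C), for any length-two path $v_1-v_2-v_3$ the relations $f_1=a_{v_1,0}a_{v_2,0}$ and $f_2=a_{v_2,0}a_{v_3,0}$ give $a_{v_1,0}f_2=a_{v_3,0}f_1$, a non-Koszul syzygy because writing it as $h(f_1e_2-f_2e_1)$ would require $h\cdot a_{v_2,0}=1$. In case (D) with, say, $m_{i,i}=1$, the odd element $f=a_{i,0}a_{j,0}$ satisfies $a_{i,0}\cdot f=0$ via $a_{i,0}^2=0$, while a degree count shows $a_{i,0}\notin f\cdot S(V_Q)$, violating the odd-parity regularity condition $\ker f=\mathrm{im}\,f$.

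\textit{Main obstacle.} The chief subtlety lies in the odd branch of case (A) and in case (D): there the free commutative algebra is an exterior or a mixed exterior-polynomial algebra, and the correct definition of a regular odd element ($\ker=\mathrm{im}$ on the ambient module) must be used rather than ordinary injectivity. Beyond this, the residual bookkeeping consists of checking that the exhibited elements are genuinely members of a minimal generating set of the ideal of relations (a direct degree count in each quadratic component) and that cases (A)--(D) jointly cover every connected symmetric quiver outside the three listed, which is an elementary enumeration on the number of vertices and arrow multiplicities.
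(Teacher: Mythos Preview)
Your argument is correct and follows essentially the paper's approach: in each excluded case you exhibit a short list of low-degree minimal relations whose syzygy obstructs regularity, and your case split (A)--(D) is a mild refinement of the paper's one-vertex/multi-vertex dichotomy (your case~(D), in particular, handles the two-vertex-with-one-loop situation somewhat more carefully than the paper's appeal to the pair $a_{i,0}a_{j,0},\,a_{i,0}a_{k,0}$ with $k=i$). One small clarification to your ``Main obstacle'' paragraph: in the odd branch of case~(A) the relation $a_{i,0}a_{i,1}$ is a product of two odd generators and hence \emph{even}, so it is ordinary injectivity that fails (via $a_{i,0}^2=0$); the odd-regularity criterion $\ker=\mathrm{im}$ is genuinely needed only in case~(D).
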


\begin{proof}
Suppose first that $Q$ has just one vertex. If $Q$ has $2m\ge 2$ loops at its only vertex $0$, the minimal set of relations of the algebra $\mathcal{A}_Q$ includes  $a_{0,0}^2$ and $a_{0,0}a_{0,1}$, which prevents it from being regular. If $Q$ has $2m+1\ge 3$ loops at its only vertex $0$, the minimal set of relations of the algebra $\mathcal{A}_Q$ includes $a_{0,0}a_{0,1}$, which is not a regular element (it annihilates $a_{0,0}$). Thus, the only remaining cases are the quiver with no loops and the quiver with just one loop. In the case of no loops, there are no relations of the second group; in the case of one loop, the relations of the second group are redundant. Either way, the statement is trivially true. 
 
Suppose that $Q$ has more than one vertex. Since $Q$ is connected, it has two vertices $i\ne j$ with $m_{i,j}\ne 0$. If $m_{i,j}\ge 2$, the minimal set of relations of the algebra $\mathcal{A}_Q$ includes $a_{i,0}a_{j,0}$ and $a_{i,0}a_{j,1}$, which prevents it from being regular. If $m_{i,j}=1$ and one of the two vertices, say $i$, has at least one arrow to a vertex $k\ne j$ (note that we allow $k=i$, so it also accounts for possible loops), then the minimal set of relations of the algebra $\mathcal{A}_Q$ includes $a_{i,0}a_{j,0}$ and $a_{i,0}a_{k,0}$, which prevents it from being regular. Thus, the only remaining case is that of the quiver $Q$ with $Q_0=\{i,j\}$ and exactly two arrows, for which the statement is established in the proof of Proposition \ref{prop:reg}. 
\end{proof}

\appendix
\section{Reduction of relations of \texorpdfstring{$\mathcal{A}_Q$}{AQ}}

In this section, we state and prove a simple but slightly technical result which is used in the paper. 

\begin{lem}\label{lem:Gauss}\leavevmode
\begin{enumerate}
\item Let $a_i$, $i\ge0$, be a sequence of commuting formal variables. Let $a(z)=\sum_{i\ge 0} a_iz^i$, and consider, for the given $m\ge 1$, the system of elements $R$ defined as the coefficients of the series $a(z)^2$, $a(z)a'(z)$, \ldots, $a(z)a^{(2m-1)}(z)$. Let us order the quadratic monomials $a_ia_j$ with the given $k=i+j$ by saying that $a_ia_j>a_{i'}a_{j'}$ if $|i-j|<|i'-j'|$. Then there is a system of elements $R'$ spanning the same vector space as $R$ whose set of leading monomials is $\{a_ia_j \colon 0\le j-i\le 2m-1\}$.
\item Let $b_i$, $i\ge0$, be a sequence of anti-commuting formal variables. Let $b(z)=\sum_{i\ge 0} b_iz^i$, and consider, for the given $m\ge 2$, the system of elements $R$ defined as the coefficients of the series $b(z)^2$, $b(z)b'(z)$, \ldots, $b(z)b^{(2m-2)}(z)$. Let us order the quadratic monomials $b_ib_j$ with the given $k=i+j$ by saying that $b_ib_j>b_{i'}b_{j'}$ if $|i-j|<|i'-j'|$. Then there is a system of elements $R'$ spanning the same vector space as $R$ whose set of leading monomials is $\{b_ib_j\colon 0\le j-i\le 2m-2\}$.
\item Let $c_i$, $i\ge0$, and $d_i$, $i\ge 0$, be two sequence of formal variables. Let $c(z)=\sum_{i\ge 0} c_iz^i$ and $d(z)=\sum_{i\ge 0} d_iz^i$, and consider, for the given $m\ge 1$, the system of elements $R$ defined as the coefficients of the series $c(z)d(z)$, $c(z)d'(z)$, \ldots, $c(z)d^{(m-1)}(z)$. Let us order the quadratic monomials $c_id_j$ with the given $k=i+j$ by saying that $c_id_j>c_{i'}d_{j'}$ if $|i-j|<|i'-j'|$, or if  $|i-j|=|i'-j'|$ and $i<i'$. Then there is a system of elements $R'$ spanning the same vector space as $R$ whose set of leading monomials is 
 \[
\{c_i d_j\colon 0\le j-i\le m \}\cup \{c_i d_j\colon 0< i-j< m \}.
 \]
\end{enumerate}
\end{lem}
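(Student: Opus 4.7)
The plan is to treat all three parts uniformly as linear algebra questions at each fixed total degree $N = i+j$, reducing each claim to an invertibility statement for a Vandermonde-type matrix built from binomial coefficients.

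First I would fix $N \ge 0$ and collect the elements of $R$ that involve only monomials with $i+j = N$. In each case such a relation comes from the coefficient of $z^{N-p}$ in the appropriate product of series, and after dividing by $p!$ takes the form
 \[
\sum_{i+j=N}\binom{j}{p}\, u_i v_j = 0,
 \]
for $p$ in the range allowed by the lemma, where $(u,v)$ is $(a,a)$, $(b,b)$, or $(c,d)$.

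For part (1), I would use commutativity to pass to the basis $X_s = a_s a_{N-s}$, $0 \le s \le \lfloor N/2 \rfloor$, in which the coefficient of $X_s$ in the $p$-th relation becomes $f_p(s) := \binom{s}{p} + \binom{N-s}{p}$ (with an extra factor of $\tfrac12$ for the middle column when $N$ is even). The key observation is that $f_p$ is invariant under $s \mapsto N-s$: as a polynomial in $s$ it has degree exactly $p$ when $p$ is even and degree at most $p-1$ when $p$ is odd, the leading terms cancelling in the latter case. Consequently the row space of the $\min(2m, N+1) \times (\lfloor N/2 \rfloor + 1)$ coefficient matrix is contained in the space of symmetric polynomials in $s$ of degree $\le 2m-1$, spanned by $1,(s-N/2)^2,\ldots,(s-N/2)^{2(m-1)}$. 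The rows with even $p$ have $s$-degrees $0,2,\ldots,2m-2$ and are therefore linearly independent, forming a basis of the row space. Under the monomial ordering, the top $m$ columns are the $m$ values of $s$ closest to $N/2$; the corresponding values of $(s-N/2)^2$ are distinct, so the square submatrix formed by the even rows and these top columns is Vandermonde in $(s-N/2)^2$, hence invertible. Gaussian elimination therefore pivots exactly in these columns, which, translated back to $(i,j)$-indexing, is $\{a_i a_j : i+j = N,\ 0 \le j-i \le 2m-1\}$.

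For part (2) I would proceed identically but with coefficients $\binom{N-s}{p} - \binom{s}{p}$, which are antisymmetric in $s$ and take the shape $(s-N/2)\,Q((s-N/2)^2)$. The row space lies in antisymmetric polynomials of degree $\le 2m-2$, is spanned by the odd rows $p = 1,3,\ldots,2m-3$, and evaluation at the top $m-1$ values of $s$ (those closest to $N/2$ with $s < N/2$) yields a Vandermonde submatrix after factoring out the non-zero $(s-N/2)$. For part (3), no commutativity is assumed; the coefficient matrix is the classical partial Pascal matrix with entries $\binom{j}{p}$ for $p = 0,\ldots,m-1$ and $j = 0,\ldots,N$, whose rows form a basis of polynomials of degree $\le m-1$ in $j$. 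Under the given ordering, the top $m$ columns correspond to $m$ consecutive integer values of $j$ in a window around $N/2$ whose exact position is dictated by the secondary tie-break $i < i'$; these give $m$ distinct evaluation points and so the Vandermonde argument applies again.

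The main obstacle is the careful bookkeeping around parities: since $j - i$ has the same parity as $N$, the (anti)symmetrization kills the leading $s$-term of $f_p$ for the ``wrong'' parity of $p$, and one has to verify that the window of top columns produced by the monomial ordering genuinely coincides with the claimed set of leading monomials, especially when $N$ is small enough that not all relations are present or when $N$ is even and the middle column needs separate treatment. Once the correct window is identified in each of the three cases, verifying Vandermonde invertibility is immediate, so the substance of the proof lies in matching up the combinatorics.
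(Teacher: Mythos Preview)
Your argument is correct and proceeds along the same high-level route as the paper: fix the total degree $N$, write the coefficient matrix of the relations in a monomial basis, and show that the submatrix on the ``top'' columns (those corresponding to the claimed leading monomials) has full rank.

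The execution differs, and yours is cleaner. The paper writes out the coefficient matrices explicitly for $N$ even and odd, performs column operations to expose a block-triangular shape, and then observes that deleting the first row and column of the part~(1) matrix yields (up to column equivalence) the part~(2) matrix with $m$ replaced by $m-1$; full rank then follows by induction on $m$, intertwining the commuting and anti-commuting cases. You instead interpret the $p$-th row as the vector of values of the polynomial $f_p(s)=\binom{s}{p}\pm\binom{N-s}{p}$, use the (anti)symmetry under $s\mapsto N-s$ to see immediately that the even (resp.\ odd) rows are redundant, and then recognise the remaining square block as a Vandermonde matrix in $(s-N/2)^2$. This avoids the induction entirely and treats all three parts in one stroke. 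The paper's approach has the minor advantage that the explicit column-reduced matrices can be read off directly, but for the purpose of the lemma your Vandermonde argument is both shorter and more transparent.
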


\begin{proof}
For commuting variables, it is easy to see that for each odd $l$ the relation $a(z)a^{(l)}(z)$ is redundant. Now let us fix $k\ge 0$, and examine the elements of $R$ obtained as the coefficients of $z^k$ of the series $a(z)^2$, $\frac12a(z)a^{(2)}(z)$\ldots, $\frac{1}{(2m-2)!}a(z)a^{(2m-2)}(z)$. If $k=2p$, the matrix relating these elements to the monomial basis is
 \[
\begin{pmatrix}
1&2&\cdots& 2&2\\
\binom{p}{2}& \binom{p-1}{2}+\binom{p+1}{2}&\cdots & \binom{1}{2}+\binom{2p-1}{2}& \binom{0}{2}+\binom{2p}{2}\\
\binom{p}{4}& \binom{p-1}{4}+\binom{p+1}{4}&\cdots & \binom{1}{4}+\binom{2p-1}{4}& \binom{0}{4}+\binom{2p}{4}\\
\vdots&\vdots&\ddots&\vdots&\vdots\\
\binom{p}{2m-2}& \binom{p-1}{2m-2}+\binom{p+1}{2m-2}&\cdots & \binom{1}{2m-2}+\binom{2p-1}{2m-2}& \binom{0}{2m-2}+\binom{2p}{2m-2}
\end{pmatrix}.
 \]
If we subtract from each column the previous one (and from the second column twice the previous one), we obtain the matrix 
 \[
\begin{pmatrix}
1&0&\cdots& 0&0\\
\binom{p}{2}& \binom{p}{1}-\binom{p-1}{1}&\cdots & \binom{2p-2}{1}-\binom{1}{1}& \binom{2p-1}{1}-\binom{0}{1}\\
\binom{p}{4}& \binom{p}{3}-\binom{p-1}{3}&\cdots & \binom{2p-2}{3}-\binom{1}{3}& \binom{2p-1}{3}-\binom{0}{3}\\
\vdots&\vdots&\ddots&\vdots&\vdots\\
\binom{p}{2m-2}& \binom{p}{2m-3}-\binom{p-1}{2m-3}&\cdots & \binom{2p-1}{2m-3}-\binom{1}{2m-3}& \binom{2p}{2m-3}-\binom{0}{2m-3}
\end{pmatrix}, 
 \]
which we shall denote $A_{2p,m}$ and keep aside for now. If $k=2p+1$, the matrix relating these elements to the monomial basis is
 \[
\begin{pmatrix}
2&2&\cdots& 2&2\\
\binom{p}{2}+\binom{p+1}{2}& \binom{p-1}{2}+\binom{p+2}{2}& \cdots & \binom{1}{2}+\binom{2p}{2}& \binom{0}{2}+\binom{2p+1}{2}\\
\binom{p}{4}+\binom{p+1}{4}& \binom{p-1}{4}+\binom{p+2}{4}& \cdots & \binom{1}{4}+\binom{2p}{4}& \binom{0}{4}+\binom{2p+1}{4}\\
\vdots&\vdots&\ddots&\vdots&\vdots\\
\binom{p}{2m-2}+\binom{p+1}{2m-2}& \binom{p-1}{2m-2}+\binom{p+2}{2m-2}&\cdots & \binom{1}{2m-2}+\binom{2p}{2m-2}& \binom{0}{2m-2}+\binom{2p+1}{2m-2}
\end{pmatrix}.
 \]
If we subtract from each column the previous one (and from the second column twice the previous one), we obtain the matrix 
 \[
\begin{pmatrix}
2&0&\cdots& 0&0\\
\binom{p}{2}+\binom{p+1}{2}& \binom{p+1}{1}-\binom{p-1}{1}&\cdots & \binom{2p-1}{1}-\binom{1}{1}& \binom{2p}{1}-\binom{0}{1}\\
\binom{p}{4}+\binom{p+1}{4}& \binom{p+1}{3}-\binom{p-1}{3}&\cdots & \binom{2p-1}{3}-\binom{1}{3}& \binom{2p}{3}-\binom{0}{3}\\
\vdots&\vdots&\ddots&\vdots&\vdots\\
\binom{p}{2m-2}+\binom{p+1}{2m-2}& \binom{p+1}{2m-3}-\binom{p-1}{2m-3}&\cdots & \binom{2p-1}{2m-3}-\binom{1}{2m-3}& \binom{2p}{2m-3}-\binom{0}{2m-3}
\end{pmatrix},
 \]
which we shall denote $A_{2p+1,m}$ and keep aside for now.

For anti-commuting variables, it is easy to see that for each even $l$ the relation $b(z)b^{(l)}(z)$ is redundant. Now let us fix $k\ge 0$, and examine the elements of $R$ obtained as the coefficients of $z^k$ of the series $b(z)b'(z)$, $\frac1{3!}b(z)b^{(3)}(z)$\ldots, $\frac{1}{(2m-3)!}b(z)b^{(2m-3)}(z)$. If $k=2p$, the matrix relating these elements to the monomial basis is
 \[
\begin{pmatrix}
\binom{p+1}{1}-\binom{p-1}{1}& \binom{p+2}{1}-\binom{p-2}{1}&\cdots & \binom{2p-1}{2}-\binom{1}{1}& \binom{2p}{1}-\binom{0}{1}\\
\binom{p+1}{3}-\binom{p-1}{3}& \binom{p+2}{3}-\binom{p-2}{3}& \cdots & \binom{2p-1}{3}-\binom{1}{3}& \binom{2p}{3}-\binom{0}{3}\\
\vdots&\vdots&\ddots&\vdots&\vdots\\
\binom{p-1}{2m-3}-\binom{p-1}{2m-3}& \binom{p+2}{2m-3}-\binom{p-2}{2m-3}&\cdots & \binom{2p-1}{2m-3}-\binom{1}{2m-3}& \binom{2p}{2m-3}-\binom{0}{2m-3}
\end{pmatrix},
 \]
which is obtained from the matrix $A_{2p+1,m}$ above by deleting the first row and the first column and, using the formula $-\binom{p-1}{s}+\binom{p+1}{s}=\binom{p-1}{s-1}+\binom{p}{s-1}$, is seen to be column equivalent to the matrix $A_{2p-1,m-1}$ above.

Similarly, if $k=2p+1$, the matrix relating these elements to the monomial basis is
 \[
\begin{pmatrix}
\binom{p+1}{1}-\binom{p}{1}& \binom{p+2}{1}-\binom{p-1}{1}& \cdots & \binom{2p}{2}-\binom{1}{1}& \binom{2p+1}{1}-\binom{0}{1}\\
\binom{p+1}{3}-\binom{p}{3}& \binom{p+2}{3}-\binom{p-1}{3}&\cdots & \binom{2p}{3}-\binom{1}{3}& \binom{2p+1}{3}-\binom{0}{3}\\
\vdots&\vdots&\ddots&\vdots&\vdots\\
\binom{p-1}{2m-3}-\binom{p}{2m-3}& \binom{p+2}{2m-3}-\binom{p-1}{2m-3}&\cdots & \binom{2p}{2m-3}-\binom{1}{2m-3}& \binom{2p+1}{2m-3}-\binom{0}{2m-3}
\end{pmatrix},
 \]
which is obtained from the matrix $A_{2p,m}$ above by deleting the first row and the first column and, using the formula $-\binom{p}{s}+\binom{p+1}{s}=\binom{p}{s-1}$, is seen to be column equivalent to $A_{2p-2,m-1}$ above.

Overall, these calculations allow us to conclude by induction that our matrices are of full rank, and deduce the existence of the spanning sets of their row spaces of requisite form. \\

For the last statement, the proof is similar but simpler. If, for the given $k\ge 0$, we examine the elements of $R$ obtained as the coefficients of $z^k$ of the series $c(z)d(z)$, $c(z)d'(z)$, $\frac12c(z)d^{(2)}(z)$\ldots, $\frac{1}{(m-1)!}c(z)d^{(m-1)}(z)$, then for $k=2p$ the matrix relating these elements to the monomial basis is
 \[
\begin{pmatrix}
1&1&1&\cdots& 1&1\\
\binom{p}{1}& \binom{p+1}{1}& \binom{p-1}{1}& \cdots & \binom{2p}{1}&\binom{0}{1}\\
\binom{p}{2}& \binom{p+1}{2}& \binom{p-1}{2}& \cdots & \binom{2p}{2}&\binom{0}{2}\\
\vdots&\vdots&\vdots&\ddots&\vdots&\vdots\\
\binom{p}{m-1}& \binom{p+1}{m-1}& \binom{p-1}{m-1}& \cdots &  \binom{2p}{m-1}&\binom{0}{m-1}\\
\end{pmatrix},
 \]
and for $k=2p+1$ the matrix relating these elements to the monomial basis is  
 \[
\begin{pmatrix}
1&1&1&1&\cdots&1&1\\
\binom{p+1}{1}& \binom{p}{1}& \binom{p+2}{1}& \binom{p-1}{1}& \cdots & \binom{2p+1}{1}&\binom{0}{1}\\
\binom{p+1}{2}& \binom{p}{2}& \binom{p+2}{2}& \binom{p-1}{2}& \cdots & \binom{2p+1}{2}&\binom{0}{2}\\
\vdots&\vdots&\vdots&\vdots&\ddots&\vdots&\vdots\\
\binom{p+1}{m-1}& \binom{p}{m-1}& \binom{p+2}{m-1}& \binom{p-1}{m-1}& \cdots & \binom{2p+1}{m-1}&\binom{0}{m-1}\\
\end{pmatrix}.
 \]
Elementary row and column operations on these matrices easily show that these matrices are of full rank, and that their row spaces have spanning sets of requisite form. 
\end{proof}

\bibliographystyle{plain}
\bibliography{biblio}

\end{document}